\newtheorem{theorem}{Theorem}
\newtheorem{proposition}[theorem]{Proposition}
\newtheorem{lemma}[theorem]{Lemma}
\newtheorem{definition}[theorem]{Definition}
\newtheorem{corollary}[theorem]{Corollary}
\newtheorem{remark}[theorem]{Remark}
\newtheorem{example}[theorem]{Example}
\newcommand\fractional[1]{\left\{#1\right\}}
\newcommand\floor[1]{\lfloor{#1}\rfloor}
\newcommand{\uu}{{\mathfrak{u}}}
\newcommand{\lp}{{\mathfrak{l}}}
\newcommand{\C}{{\mathbb C}}
\newcommand{\N}{{\mathbb N}}
\newcommand{\Q}{{\mathbb Q}}
\newcommand{\R}{{\mathbb R}}
\newcommand{\Z}{{\mathbb Z}}
\newcommand{\CA}{{\mathcal A}}
\newcommand{\CC}{{\mathcal C}}
\newcommand{\CF}{{\mathcal F}}
\newcommand{\CH}{{\mathcal H}}
\newcommand{\CP}{{\mathcal P}}
\newcommand{\CS}{{\mathcal S}}
\newcommand{\CT}{{\mathcal T}}
\newcommand{\CW}{{\mathcal W}}
\newcommand{\lin}{\operatorname{lin}}
\renewcommand{\mod}{\operatorname{mod}}
\newcommand{\Res}{\operatorname{Res}}
\newcommand{\spec}{\operatorname{Spec}}
\newcommand{\tr}{\operatorname{Tr}}
\renewcommand{\a}{{\mathfrak{a}}}
\renewcommand{\c}{{\mathfrak{c}}}
\newcommand{\g}{{\mathfrak{g}}}
\renewcommand{\k}{{\mathfrak{k}}}
\renewcommand{\l}{{\mathfrak{l}}}
\newcommand{\z}{{\mathfrak{z}}}
\renewcommand{\t}{{\mathfrak{t}}}
\renewcommand{\u}{{\mathfrak{u}}}
\newcommand{\la}{\langle}
\newcommand{\ra}{\rangle}
\begin{document}

\title{ Multiplicity of compact group representations  and applications to Kronecker coefficients}
\author{Velleda Baldoni,  Mich\`ele Vergne}

\date{}


\maketitle

\begin{abstract}These notes are an expanded version of a talk given by the second   author.
Our main  interest is focused on the challenging problem of computing Kronecker coefficients.
We decided, at the beginning, to take a very general approach to the problem of studying  multiplicity functions,
and  we survey the various aspects of the theory that comes into play, giving  a detailed bibliography to orient the reader.
Nonetheless the  main general theorems involving multiplicities functions (convexity, quasi-polynomial behavior, Jeffrey-Kirwan residues) are stated without proofs.
Then, we present in detail our approach to the  computational problem, giving explicit formulae, and outlining an algorithm that  calculate many interesting examples, some of which  appear in the literature also in connection with Hilbert series.
\end{abstract}

\tableofcontents

\section{Introduction}

\bigskip

Let $K$ be a compact connected Lie group and let $\hat K$ be the set of classes of irreducible finite dimensional representations of $K$.
Let $V$ be a representation of $K$, which is a direct sum (possibly infinite) of irreducible finite dimensional representations of $K$  with finite multiplicities.
We write $$V=\oplus_{\pi\in \hat K} m_K(\pi)V^K_ \pi$$
where $V^K_ \pi$ is the irreducible representation of $K$ parameterized by $\pi.$ We call the function $\pi\to m_K(\pi)$ on $\hat K$ the multiplicity function of  the representation $V$.
The study of the multiplicity function $m_K(\pi)$ is important in representation theory, invariant theory, quantum information theory.

When the representation space $V$ is constructed by " geometric quantization" of a Hamiltonian manifold $M$, the moment map $\Phi$ on $M$ gives us a geometric interpretation of the multiplicity function of the representation $V$.
This is the famous $[Q,R]=0$ theorem, obtained by Meinrenken-Sjamaar \cite{M-S}.
This has consequences on the qualitative properties of the function $\pi\to m_K(\pi)$ that we will recall here
in two  examples, which are the paradigms  of geometric quantization:
\begin{itemize}
\item The quantization of a symplectic vector space under a linear action of $K$.
\item  The quantization of $T^*K$, the cotangent bundle of $K$.
\end{itemize}

In these two basic examples, we give  a direct  construction of the corresponding representation space $V$.
 We will not justify that the representation space $V$  is ``the quantized space" of $M$, but we will recall results on the multiplicity function   of the representation of $K$ in
$V$ in terms of the moment map on $M$ which confirm the fact that $V$ is the ``the quantized space" of the Hamiltonian space $M$.

\begin{example}Quantization of a symplectic vector space \end{example}
 Consider $M$  a $2 n$ dimensional symplectic vector space ($M=\R^{2n}$)  with a linear symplectic action  of the compact group  $K.$
We choose a Hermitian structure on $M$ (so we identify $M$ to $\C^n$)  such that $K$ acts unitarily on $M$.
We denote by $Sym^k(M)$ the complex space of $k$-symmetric tensors on  the complex space $M$.
Consider
 $$V=\oplus_{k=0}^{\infty}Sym^k(M),$$ the space of polynomial functions on $M^*$.

 The space $V$ may be completed as a Hilbert space,  by choosing a Gaussian inner product, and the completion of $V$ is  the Fock space.
In the philosophy of quantization, this Fock space  is the quantized space associated to the dual representation of $K$ in the symplectic space $M^*$.

If $K$ contains the homotheties, we then
have $$V=\oplus_{\pi\in \hat K} m_K(\pi)V_\pi^K$$ where $m_K(\pi)$ is finite.
When $K$ is abelian,  $m_K(\pi)$ is a  partition function.

Remark that the knowledge of the function $\pi\to m_K(\pi)$, for $\pi$  trivial on the semi-simple part of $K$, allows to compute the Hilbert series
of the ring of invariant polynomials on $M^*$ under the semi-simple group $[K,K]$.

\begin{example}Quantization of the cotangent space $T^*G$\end{example}\label{ex:cotangent}
Consider   a compact Lie group $G$.
Let us define $V=R(G)$ to be the subspace of $C^{\infty}(G)$ generated by the coefficients
$\langle g u_1,u_2\rangle$ of finite dimensional representations of $G.$ The space $L^2(G)$ is the Hilbert completion of $V$. In the philosophy of quantization, the space $L^2(G)$ ``is" the quantized space associated to the symplectic space $T^*G$.

By Peter-Weyl theorem, under the action of $G\times G$
by left and right translations:
$$V=\oplus_{\lambda\in \hat G} V_\lambda^G\otimes (V_\lambda^G)^*.$$

Let $K$ be a subgroup of $G$, and consider the subgroup $G\times K$ of $G\times G$. Under the action of $G\times K,$
$$V=\oplus_{\lambda\in \hat G,\mu\in \hat K} m_{G,K}(\lambda,\mu)V^G _\lambda\otimes (V_\mu^K)^*$$
where  $m_{G,K}(\lambda,\mu)$ is the multiplicity of the representation $\mu$ in the restriction of $\lambda$ to $K$.
This is the so called branching coefficient.

\bigskip

In both of these examples, we will recall some of the qualitative properties of the multiplicity function, in particular its
piecewise quasi-polynomial behavior on closed cones.
These examples being particular cases of geometric quantization
of Hamiltonian manifolds,  the moment cone, its decomposition in cones of quasi-polynomiality, and the determination of its faces,
 gives us already  information on the behavior of the multiplicity function.

We will then  recall our method to compute the multiplicity function $m_K(\pi)$, based on computations of partition functions via multi-dimensional
residues. We apply it to the challenging example of computation of Kronecker coefficients.
In particular, we obtain an algorithm which enables us to compute the dilated Kronecker coefficients associated to
Young diagrams  with $n$ rows (but  `` any number" of columns and any shape) in polynomial time when $n$ is fixed.
We implemented our  algorithm  as a simple Maple program. In particular, it computes
the dilated Kronecker coefficient $g(k\alpha,k\beta,k \gamma)$ in reasonable time (reasonable=less than 20 minutes)
for $3$ rows. We obtain a quasi-polynomial of degree $11$ for general $\alpha,\beta,\gamma$ (with coefficients periodic functions of period at most $12$) see Example \ref{C333}.
When $\alpha,\beta,\gamma$ are special, the degree might be much smaller.
For example,  the dilated Kronecker coefficient
$m(k)=g(k[1,1,1],k[1,1,1],k[1,1,1])$ corresponds to the Hilbert series
of the ring of invariants of $SL(3)\times SL(3)\times SL(3)$ in $\C^3\otimes \C^3\otimes \C^3$.
  An efficient way to represent   periodic functions   is by using  step-polynomials (\cite{Verdo}).
In this representation, $m(k)$ is given
    by the following quasi-polynomial
     {\small{$$1-\frac{3}{2}\fractional{\frac{1}{3}k}+\frac{3}{2}\fractional{\frac{1}{3}k}^2-\frac{3}{2}\fractional{\frac{1}{2}k}-\fractional{\frac{3}{4}k}^2+\fractional{\frac{3}{4}k}\fractional{\frac{1}{2}k}+\fractional{\frac{1}{2}k}^2+$$$$\left(\frac{1}{4}-\frac{1}{4}\fractional{\frac{1}{2}k}\right)k+\frac{1}{48}k^2$$}}
    Here  for $s\in\R$, the function $\fractional{s}=s-\floor{s} \in [0,1)$
    where     $\floor{s}$ denotes the largest integer smaller or equal
    to~$s$. It is easy to check that (fortunately)
    this result of our algorithm  agrees with Kac's determination \cite{Kac}
    of the ring of invariants  $[Sym(\C^3\otimes \C^3\otimes \C^3)]^{SL(3)\times SL(3)\times SL(3)}$, which is freely generated with generators in degree $2,3,4$ (see \cite{MAN}).
Indeed $$\sum_k m(k)t^k=\frac{1}{(1-t^2)(1-t^3)(1-t^4)}.$$

Another example is $$m(k)=g(k[3,3,3,3],k[4,4,4],k[4,4,4])=\dim [Sym^{12 k}(\C^4\otimes \C^3\otimes \C^3)]^{SL(4)\times SL(3)\times SL(3)}.$$
Our algorithm gives the Hilbert series    $$\sum_k m(k)t^k={\frac {1+{t}^{9}}{ \left( 1-{t}^{2} \right) ^{2} \left(1- {t}^{4}
 \right)  \left( 1-t\right)  \left(1- {t}^{3} \right) }}.$$

We will resume  with the study of Kronecker coefficients  in  Section \ref{application} giving all the details.
For example, we can compute easily the Hilbert series for $3,4,5$ qubits. The result for the case of $5$ qbits appears also in \cite{LUTHI} with a discrepancy due to a  small misprint  (see \ref{application}).

Our article, focused towards computational problems,  is strongly inspired by the  article of Christandl-Doran-Walter \cite{C-D-W} and we thank them for introducing us to the subject of quantum computing.
Our computational method differ at some points from their method, and we will discuss the differences. It is
based on  Jeffrey-Kirwan residues.
This allows us to produce a symbolic function
valid inside the cone of quasi-polynomiality, in particular along a line.

\bigskip
We believe that this survey might be of interest. The eventual reader that just want to
read  a discussion of general results in Hamiltonian geometry
and some applications of these results to combinatorics, without being  addicted to  computing,  might skip the technical parts,
 namely, Subsection \ref{tools}
inside Section \ref{linearrep}
as well as  Subsection  \ref{branch.general}
and Subsection \ref {singular} inside Section \ref{Branching Rules}, and might go directly to
Section \ref{section:kro}
 skiping again the technical subsection \ref{thealgoKro}.

 \section{Notations}\label{not}
 Let  $K$ be a compact connected Lie group and  $T_K$ a maximal torus of  $K$. We denote by $\k $ and $\t_\k$ the corresponding Lie algebras.
Denote by $\CW_\k$ the Weyl group, and $w\to \epsilon(w)=\det_{\t_\k} w$ its sign representation.

 The weight lattice $\Lambda_K$ of $T$ is a lattice in $i\t_\k^*$.
 If $\lambda\in \Lambda_K\subset i\t_\k^*$,  it determines a one dimensional representation  of $T_K$
 by $t\to e^{\langle \lambda,X\rangle}, $ with $t=\exp X,  \ X\in \t.$ As $\lambda$ takes imaginary values on $\t_\k$, $e^{\langle \lambda,X\rangle}$ is of modulus $1$.

We denote by $\Gamma_K\subset i\t_\k$ the dual lattice of $\Gamma_K$: if $\lambda\in \Lambda_K$, $\gamma\in \Gamma_K$,
then $\la \lambda,\gamma\ra$ is an integer.

Let $\Delta_\k\subset i\t^*$ be the root system for $\k$ with respect to $\mathfrak t_\k.$
If $\alpha\in \Delta_\k$, its coroot $H_\alpha$ is in $i\t_\k$, and $\la \alpha,H_\alpha\ra=2$.
Define $ \hat K$
  to be  the set of irreducible finite dimensional classes of complex  representations of  $K$.
Fix $\Delta_\k^+$, a positive system for $\Delta_\k$, and let
$$i\t^*_{\k,\geq 0}=\{\xi, \langle \xi, H_\alpha\rangle \geq 0, \alpha \in \Delta_\k^+\}$$
be the corresponding positive Weyl chamber.
Let $\rho_\k=\frac{1}{2}\sum_{\alpha\in \Delta_\k^+} \alpha$.

We denote by $\Lambda_{K,\geq 0}$ the  "cone" of dominant weights, that is the set $\Lambda_K\cap i\t^*_{\k,\geq 0}.$
An element $\lambda \in \Lambda_{K,\geq 0}$ is called a dominant weight.

When  the group $K$ is understood, we denote $\Lambda_K$ simply by $\Lambda$, $T_K$ by $T$,  $\t_\k$ by $\t$, etc.

The following example is the only example which will be needed when discussing Kronecker coefficients.

\begin{example} \label{Un} \end{example} We consider the case $K=U(n)$ and $T\subset K$ the torus consisting of the diagonal matrices.
Then  the Lie algebra $\k$ consists of the $n\times n$ anti Hermitian matrices  and $i\k$ is the space of   Hermitian matrices.
If we  identify $\k$ and $\k^*$ via the linear form $Tr(AB)$,
then
$\t=\t^*$   is the set of diagonal anti hermitian matrices.
Thus  the positive Weyl chamber  is
$i\t^*_{\geq 0}=\{\xi=[\xi_1,\xi_2,\ldots, \xi_n]\}$ with $\xi_j\in \R$ and
$\xi_1\geq \xi_2\geq \cdots \geq \xi_n$ where $\xi$ represents the hermitian matrix with diagonal entries $\xi_j$.
Denote by $|\xi|=\sum_i \xi_i$, the trace of  the matrix corresponding to $\xi$.

An element $X\in i\k^*$ is conjugated to a unique element $\xi=[\xi_1,\xi_2,\ldots, \xi_n]$
in $i\t^*_{\geq 0}$, where the list
$\xi_1,\xi_2,\ldots, \xi_n$ is the list  of  eigenvalues  (with their multiplicities) of $X$ ordered decreasingly.
We denote $\xi$  by $\spec(X)$.

 The ``cone" of dominant weights is
$\Lambda_{U(n),\geq 0}=\{\lambda=[\lambda_1,\lambda_2,\ldots, \lambda_n]\}$ with $\lambda_j\in \Z$ and
$\lambda_1\geq \lambda_2\geq \cdots \geq \lambda_n.$

If  $\lambda \in\Lambda_{U(n),\geq 0}$ is such that  $\lambda_n\geq 0$, it
indexes a finite dimensional irreducible polynomial representation of $GL(n,\C)$.
The corresponding subset of  $\Lambda_{U(n),\geq 0}$ will be denoted by $P\Lambda_{U(n),\geq 0}$.
If $\lambda\in P\Lambda_{U(n),\geq 0}$, we also identify  $\lambda$  to a Young diagram with $n$ rows.
The content of the corresponding diagram is the number of its boxes, that is $k=|\lambda|$.
The dominant weight $[k,k,\ldots,k]$ correspond to a rectangular   Young diagram  with $k$ columns and $n$ rows, and index the
one-dimensional representation $\det(g)^k$ of $U(n)$.

Assume now  $N\geq n,$ then there is a natural injection from $P\Lambda_{U(n),\geq 0}$ to $P\Lambda_{U(N),\geq 0}$  obtained just by adding more zeros
on the right of the sequence $\lambda.$ We denote by $\tilde{\lambda}$ the  new sequence so obtained. $\Box$

\bigskip

We  parameterize $\hat K$ by the set of elements  $(\pi_\lambda, V^K_\lambda)$ where $\pi_\lambda$ denotes the irreducible finite dimensional representation of highest weight  $\lambda \in \Lambda_{K,\geq 0}$ and $V^K_\lambda$ is the finite dimensional space on which $\pi_\lambda$ acts.
If there is no ambiguity on the group $K$,  we may write simply $V_\lambda$.
If $T$ is a torus, we also write $e^{\mu}$ for the one dimensional representation of $T$ associated to a weight $\mu$ of $T$.
The contragredient representation $V_\lambda^*$ of $K$ is indexed by the dominant weight
 $\lambda^*=-w_0(\lambda)$, where $w_0$ is the longest element in $\CW_\k$.

Let $\pi$ be a representation of  $K$  acting on a complex vector space $V$.
Assume $V$ is a direct sum (possibly infinite) of irreducible finite dimensional complex representations $\pi_\lambda$ of $K$, each of them occurring with finite multiplicities, and let
$$V=\oplus_{\lambda\in \hat K} m_K(\lambda) V_\lambda^K.$$
Assume that the restriction of the representation of  $K$ to its  maximal torus $T$   is also with finite multiplicities $m_T(\mu)$.
We have
\begin{equation}\label{eq:multKmultT}
m_{K}(\lambda)=\sum_{w \in \CW_\k} \epsilon(w)m_{T}(\lambda+\rho_\k -w(\rho_\k)).
\end{equation}

This is a consequence of the denominator formula:
\begin{equation}\label{eq:denom}
\prod_{\alpha\in \Delta_\k^+}(1-e^{\alpha})=\sum_{w \in \CW_\k} \epsilon(w)e^{\rho_\k -w(\rho_\k)}.
\end{equation}

If $X\in \k$, we denote by
 $K(X)$ the subgroup of $K$ stabilizing $X$.
  If $K$ is compact connected, then $K(X)$ is a compact connected subgroup of $K$.

If $\CH$ is a Hermitian vector space, define $\CH_{pure}=\{v\in \CH, \langle v, v\rangle=1\}$, the set of elements of $\CH$ of norm $1$.
Such an element is called a pure state.
A density matrix is a semi-positive definite Hermitian matrix with trace $1$.
A pure state in the space $\C^2$ is called a qubit, and
a pure state in $\CH=(\C^2)^{\otimes N}$  called a $N$ qubit.

\bigskip

For us, a cone $C$ in a real vector space $E$  is a closed subset of $E$ containing $0$,  invariant by positive homotheties.
We consider here only polyhedral cones, that is $\xi\in C$  if and and only if $\xi$ satisfies a certain number of  linear inequations
$\la X_a,\xi\ra\geq 0$, with $X_a\in E^*$.
When we say cone, it will always mean a polyhedral cone.
Most of the time, $E$ will be equipped with a lattice $L.$
Consider the dual lattice $L^*$ in $E^*.$ We say that $C$ is a rational cone if
the $X_a$ are in $L^*.$
In other words, if $E=\R^n$, and $L=\Z^n$, a rational cone is a cone defined by inequations with integral coefficients.

We will say that a cone $C$ is solid if $C$ has non empty interior.
If $F$ is a finite set of vectors in $E$, we denote by $Cone(F)$ the cone generated by $F$.

An affine cone is a translate $s+C$ of a cone.
An open cone will mean the interior of a polyhedral cone, that is a set determined by strict linear inequations
$\la X_a,\xi\ra > 0$.
If $C$ is a polyhedral cone in a vector space,
 we say that $C=\cup C_a$ is a cone decomposition of $C$  if the  $C_a$ are closed polyhedral cones of dimension equal to the dimension of $C$, and  if, when $a\neq b$, the intersection
$C_a\cap C_b$ is contained in the boundary of the cone $C_a$ and $C_b$.

A face $F$ of a cone $C$ generates a linear space that we call $lin(F)$.
A facet of $C$ is a face of codimension $1$ in $C$.

A polytope is a closed compact convex subset of $E$ determined by linear affine inequations
$\la X_a,\xi\ra\geq c_a$, with $X_a\in E^*$, $c_a\in \R$.
If $X_a\in L^*$, and $c_a\in \Q$, the polytope is rational.

\section {Basic examples for multiplicities}
\begin{example} {The Littlewood-Richardson coefficients}\end{example}

We consider two irreducible representations  $(\pi_\lambda,V_\lambda)$ and  $(\pi_\mu,V_\mu)$  of $K.$  Their tensor product
$V=V_\lambda\otimes V_\mu $ is a $ K \times K$ representation  with action defined by:
$(\pi_\lambda \otimes \pi_\mu )(k_1,k_2)=\pi_\lambda(k_1) \otimes \pi_\mu (k_2).$
The group $K$ acts diagonally on $V$ via $\pi(k)=\pi_\lambda(k) \otimes \pi_\nu(k)$, so we may consider  the representation $\pi=\pi_\lambda \otimes \pi_\nu$ restricted to $K.$
We can write  the classical formula
  $$(V_\lambda \otimes V_\mu)_{|K}=\oplus_{\nu \in \Lambda_{\geq 0}}c_{\lambda,\mu}^{\nu} V_\nu$$
where  $  c_{\lambda,\mu}^{\nu}$  is the multiplicity of $\pi_\nu$ in $ (\pi_\lambda \otimes \pi_{\mu})_{|K}$.

 The numbers $c_{\lambda,\mu}^{\nu}$ are called {\it {Littlewood- Richardson coefficients.}}

The function $k\to c_{k\lambda,k\mu}^{k\nu}$ of $k\in \{0,1,2,\ldots\}$ is called the dilated
Littlewood-Richardson coefficient.
It follows (see Theorem \ref{theo:mGK}) from the $[Q,R]=0$ theorem that
the function $k\to c_{k\lambda,k\mu}^{k\nu}$ is given by a quasi-polynomial formula for $k\geq 1$
(polynomial in the case  of $U(n)$, see Proposition \ref{polyhorn})).

We recall that Cochet, \cite{C1} , \cite{C2}, has given an algorithm to compute
the dilated Littlewood-Richardson coefficient for all classical root systems.
This algorithm is available on \cite{Vergnewebpage}  $\Box$

\begin{example} {The Kronecker coefficients}\label{exa:Kron}\end{example}
Let $N=n_1n_2\cdots n_s$ where $n_1,\, n_2,\ldots ,n_s$ are positive integers and write $\C^N=\C^{n_1} \otimes \C^{n_2} \otimes \cdots  \otimes \C^{n_s} $.  Consider the   action of the group $K=U(n_1)\times \cdots \times U(n_s)$ on the complex vector space $\C^{N}$, where $(k_1,k_2,\ldots,k_s)\in K$ acts by
$k_1\otimes k_2\otimes \cdots\otimes k_s$ on $\C^{n_1} \otimes \C^{n_2} \otimes \cdots  \otimes \C^{n_s} $.
Consider the space  $V=\oplus_{k=0}^{\infty}V^k$, where $V^k=Sym^k(\C^N)$ is the space of symmetric tensors of degree $k$.
Write $$V=\oplus g(\lambda_1,\lambda_2,\ldots, \lambda_s) V^{U(n_1)}_{\lambda_1}\otimes \cdots \otimes V^{U(n_s)}_{\lambda_s}.$$
Here  $\lambda_j$ are polynomial representations of $U(n_j)$ and are indexed by Young diagrams with $n_j$ rows.

Considering the action of the center, we see that all diagrams $\lambda_j$ occurring in $V^k$
 have content $k$, so that they also index irreducible representations
$\pi_{\lambda_j}$ of the
symmetric group $\Sigma_k$.
By Schur duality, $ g(\lambda_1,\lambda_2,\ldots, \lambda_s) $ is the multiplicity of the trivial representation of $\Sigma_k$ in
$\pi_{\lambda_1}\otimes \cdots \otimes \pi_{\lambda_s}$.
The numbers $ g(\lambda_1,\lambda_2,\ldots ,\lambda_s)$ are called   {\it{Kronecker coefficients.}}
The function $k\to g(k\lambda_1,k\lambda_2,\ldots ,k\lambda_s)$ of $k\in \{0,1,2,\ldots\}$ is called the dilated
Kronecker coefficient.
It follows again from the $[Q,R]=0$ theorem that
the function $k\to g(k\lambda_1,k\lambda_2,\ldots ,k\lambda_s)$ is given by a quasi-polynomial formula for $k\geq 1$, (Proposition \ref{dilatedKro}).

Denote by $M=n_2n_3 \cdots  n_s$. In computing Kronecker coefficients,
we may assume $n_1\leq M$, and that $n_1$ is the maximum of the $n_i$.
Indeed, if $n_1\geq M$, the multiplicities
$g(\lambda_1,\lambda_2,\ldots, \lambda_s)$ stabilize in the sense that
$g(\lambda_1,\lambda_2,\ldots, \lambda_s)$ is non zero only if $\lambda_1$
is obtained from
an element in   $ P\Lambda_{U(M),\geq 0}$, by adding more zeroes on its right,
and multiplicities coincide.
Moreover, we have
  $$g(\lambda_1,\lambda_2,\ldots,  \lambda_s)=g(\tilde\lambda_1,\tilde \lambda_2,\ldots, \tilde \lambda_s).$$
Thus it is sufficient to study Kronecker coefficients in the case where
$n_1\leq M=n_2 n_3\cdots n_s$  and where $n_i$ is the number of rows of the tableau corresponding to $\lambda_i$.

To describe an algorithm to compute  the dilated Kronecker coefficient is the main computational  objective of this article.
We will  state some of the previous  results and  our results in Section \ref{section:kro}.
Our algorithm uses a  branching rule from $U(n_2 n_3\cdots n_s)$ to
$U(n_2)\times U(n_3)\times \cdots \times U(n_s)$   in order to reduce (slightly) the size of the problem.
Our Maple program is available on \cite{Vergnewebpage} $\Box$

\section {Linear representation of $K$ in a Hermitian vector space $\CH$}\label{linearrep}

Let $\CH$ be a finite dimensional Hermitian vector space provided with a representation of $K$ by unitary transformations.

Assume that $K$ contains the subgroup of homotheties $\{e^{i\theta}Id_\CH\}$.
Consider $V=Sym(\CH)$, the space of symmetric tensors, so we have
$$Sym(\CH)=\oplus_{\lambda\in \hat K} m_K^{\CH}(\lambda) V_\lambda^K$$
where $m_K^{\CH}(\lambda)$ is finite.

It is clear that if $m_K^{\CH}(\alpha)$ and $m_K^{\CH}(\beta)$ are non zero, then
$m_K^{\CH}(\alpha+\beta)$ is non zero.
Indeed the product of two  non zero vectors $f,g$ in
$Sym(\CH)$ is non zero, and if $f,g$ are highest weight vectors of weights $\alpha,\beta$, the product is a highest weight vector of weight
$\alpha+\beta$. So the support of the multiplicity function $m_K^{\CH}(\alpha)$
is a ``discrete cone" (that is a semi-group). We will relate this discrete cone  to the moment map and to the
 Kirwan  cone.

\subsection{Examples of decomposition of $Sym(\CH)$}

\begin{example}\label{basicex}\end{example}

Let $\CH=\C^n$
and let $K=S^1$ acting  by the homothety
$$(z_1,z_2,\ldots, z_n) \to (e^{i\theta}z_1,e^{i\theta}z_2,\ldots, e^{i\theta}z_n).$$

Then  $$V=Sym(\CH)=\oplus_{k=0}^{\infty} \binom{n-1+k }{n-1} e^{ik\theta}$$
since  $dim(Sym^k(\CH))=\binom{n-1+k }{n-1} .$

\noindent So the multiplicity function $k\to dim(Sym^k(\CH))$ is a polynomial function of $k$.

\begin{example}\label{knapsack} The Knapsack \end{example}

Again, let $\CH=\C^n$
and let $K=S^1$ acting  by  $$(z_1,z_2,\ldots, z_n) \to (e^{iA_1\theta}z_1,e^{iA_2\theta}z_2,\ldots, e^{iA_n\theta}z_n),$$
where now the $A_i$ are any positive integers.

Then $Sym(\CH)=\sum_k m(k) e^{ik\theta}$
where $m(k)$ is the number of solutions in non negative integers $x_i$ of the knapsack equation
$$A_1x_1+A_2 x_2+\cdots+A_n x_n=k.$$

The computation of  the function $m(k)$ is  an  "intractable" problem, as illustrated in the lecture \url{
https://www.youtube.com/watch?v=2IbJf4oXOxU&feature=youtu.be} by P. Van Hentenryck.
See however \cite{BBDDKV} for results on its highest coefficients.

\begin{example} Cauchy formula\label{exa:Cauchy formula}
\end{example}
Let $N,  n$ be positive integers, and assume $N\geq n$.  Let  $\lambda=[\lambda_1,\ldots,\lambda_n] $  be a  sequence of weakly decreasing non negative integers with $\lambda_n\geq 0$. We consider $\lambda$ as an element of  $\Lambda_{U(n),\geq 0}$, that is a dominant polynomial weight for $U(n)$.
To $\lambda\in \Lambda_{U(n),\geq 0}$ is  associated an irreducible representation of $U(n)$ that we have denoted by $V_{\lambda}^{U(n)}.$
Recall that if   $N\geq n,$ then there is a natural injection $\lambda \to \tilde \lambda$  from $P\Lambda_{U(n),\geq 0}$ to $P\Lambda_{U(N),\geq 0}$  obtained just by adding more zeros
on the right of the sequence $\lambda.$
The decomposition of $Sym(\C^{n}\otimes \C^N)$ with respect to $U(n)\times U(N)$  (see \cite{KP}, page 63) is given by {\it {Cauchy formula}}:

\begin{equation}  Sym(\C^{n}\otimes \C^N)=\oplus_{\lambda \in P\Lambda_{U(n),\geq 0}}V_\lambda^{U(n)}\otimes V_{\tilde\lambda}^{U(N)}. \end{equation}
\bigskip

\begin{example} Clebsch-Gordan coefficients \label{exa:Clebsch} \end{example}

Consider the representation $\Pi$ of $K= U(d) \times U(d) \times U(d)$ on
$\mathcal H = \mathfrak{gl}(d) \oplus \mathfrak{gl}(d)$ given by
\[ \Pi(g, h, k) (A,B) = (g A k^{-1}, h B k^{-1}), \] where $\mathfrak{gl}(d)$ is the Hilbert space of complex $d \times d$ matrices equipped with the trace inner product $\la A,B \ra:= \tr A B^*$.
This action factorizes through the center $Z=S^1$ of $U(d)$ embedded in $U(d)\times U(d)\times U(d)$ as a diagonal subgroup
$(z{\rm Id} ,z{\rm Id} ,z{\rm Id})$.
We can write
$$Sym(\CH)=\oplus_{\lambda, \mu, \nu } c_{\lambda,\mu}^{\nu} V_\lambda \otimes V_\mu \otimes V_\nu^*.$$

Here $\lambda,\mu,\nu$ vary in
$\Lambda_{U(d),\geq 0}$ and
 $c_{\lambda,\mu}^{\nu}$ is the multiplicity of the representation $V_\nu$ in
$V_\lambda\otimes V_\mu$.
For $c_{\lambda,\mu}^{\nu}$ to be non zero, we need $|\lambda|+|\mu|=|\nu|$, as seen by considering the action of the center.

\subsection{The moment cone}
In the following, the compact group  $K$  will be fixed, and we denote simply by $T$ its maximal torus, $\t$ its Lie algebra, etc., as we stated in the Section \ref{not}

 Consider the moment map $\CH\to i\k^*$ given by
$$\Phi_K(v)(X)=\langle Xv,v\rangle.$$
Here $X\in\k$, and we have denoted by $v\to Xv$ the infinitesimal action of $X\in \k$ in $V$ by a anti-hermitian transformation, so
$\langle Xv,v\rangle$ is purely imaginary.

\begin{remark}\end{remark}
We consider $\CH$ as a symplectic manifold, with symplectic form
$\frac{1}{-2i}\la dv,dv \ra$
(if $\CH=\C^n$ with coordinates $z_k$, this is the form $\frac{-1}{2i}\sum_k dz_k \overline{dz_k}$),
then $\frac{1}{2i}\Phi_K$ is the moment map for the action of $K$ in  $\CH$ in the sense of Hamiltonian geometry.

$\Box$

\bigskip

We consider $i\t^*_{\geq 0}$ as a subset of $i\k^*$.

\begin{definition}

Define

$\bullet$  $C_K(\CH)=\Phi_K(\CH)\cap i\t^*_{\geq 0}$

and

$\bullet$
$\Delta_K(\CH)=\Phi_K(\CH_{pure})\cap i\t^*_{\geq 0}.$

\end{definition}

Assume that $K$ contains the homotheties
and let $J\in i\t$ so that the infinitesimal action of $J$ is the identity on $\CH$.
Then a pure state $v$ is such that $\Phi_K(v)(J)=1$.

Recall the following theorem, which is a particular case of Kirwan theorem \cite{Kirwan.84.bis}
 (a proof of this theorem, following closely Mumford argument, \cite{Mum},  can be found in \cite{Be}).

\begin{theorem}
$\bullet$ The set $C_K(\CH)=\Phi_K(\CH)\cap i\t^*_{\geq 0}$ is a  rational polyhedral cone.
We call $C_K(\CH)$ the Kirwan cone.

$\bullet$ The set $\Delta_K(\CH)$ is a rational polytope.
We call $\Delta_K(\CH)$ the Kirwan polytope.

\end{theorem}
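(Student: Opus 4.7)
The plan is to reduce both bullets to a single convexity statement on a compact Kähler manifold and then invoke a Kirwan-type convexity theorem. First, since $K$ contains the homotheties $\{e^{i\theta}\mathrm{Id}_\CH\}$, there is the distinguished generator $J\in i\t$ acting as the identity on $\CH$, and one checks directly from the definition of $\Phi_K$ that $\Phi_K(zv)=|z|^2\Phi_K(v)$ for $z\in\C$. Hence $\Phi_K(\CH)=\R_{\geq 0}\cdot\Phi_K(\CH_{pure})$, and intersecting with $i\t^*_{\geq 0}$ gives $C_K(\CH)=\R_{\geq 0}\cdot\Delta_K(\CH)$. Once $\Delta_K(\CH)$ is shown to be a rational polytope, $C_K(\CH)$ is automatically a rational polyhedral cone, and the theorem reduces to the polytope statement.

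Next I would compactify by passing to $\PP(\CH)$. Equipped with the Fubini--Study form, $\PP(\CH)$ is a compact Kähler Hamiltonian $K$-manifold, and the induced moment map $\bar\Phi:\PP(\CH)\to i\k^*$ is characterized by $\bar\Phi([v])=\Phi_K(v)/\langle v,v\rangle$. The restriction of $\Phi_K$ to $\CH_{pure}$ descends through $\CH_{pure}\to\PP(\CH)$, so
\[
\Delta_K(\CH)=\bar\Phi(\PP(\CH))\cap i\t^*_{\geq 0}.
\]
Kirwan's convexity theorem applied to the compact Hamiltonian $K$-manifold $\PP(\CH)$ then yields that this intersection is a convex polytope, which is the heart of the argument.

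For rationality, I would handle the torus first. Decomposing $\CH=\oplus_\alpha\CH_\alpha$ into $T$-weight spaces and writing $v=\sum v_\alpha\in\CH_{pure}$, one obtains $\Phi_T(v)=\sum\|v_\alpha\|^2\alpha$, so $\Delta_T(\CH)$ is the convex hull of the finite set of $T$-weights appearing in $\CH$, all of which lie in the weight lattice $\Lambda_K$. The nonabelian polytope $\Delta_K(\CH)$ is then carved out of $i\t^*_{\geq 0}$ by combinatorial data attached to $\Delta_T(\CH)$ and to the Weyl chamber structure (stabilizers of faces, cross-section slices), each producing inequalities with rational coefficients.

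The main obstacle is the nonabelian convexity step invoked above. For a torus, the Atiyah--Guillemin--Sternberg theorem applied to $\PP(\CH)$ gives convexity and simultaneously the explicit polytope just described. For nonabelian $K$ one must either run Kirwan's Morse-theoretic argument on $\|\bar\Phi\|^2$, or, following the Mumford-style path referenced in \cite{Be}, identify nonemptiness of $K$-orbits in fibers of $\bar\Phi$ over $i\t^*_{\geq 0}$ with semistability of points in $\PP(\CH)$ for the complexified $K_\C$-action with a shifted polarization, and then extract finitely many linear inequalities from the Hilbert--Mumford numerical criterion. This latter route has the advantage of directly producing rational defining inequalities for $C_K(\CH)$, so that convexity and rationality of the cone are obtained together.
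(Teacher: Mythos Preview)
Your approach is essentially the one the paper points to: the paper does not give its own proof of this theorem but states it as a particular case of Kirwan's convexity theorem \cite{Kirwan.84.bis}, noting that a proof ``following closely Mumford argument'' can be found in \cite{Be}. Your outline---pass to $\PP(\CH)$ with the Fubini--Study form, invoke nonabelian convexity for the compact Hamiltonian $K$-manifold, and obtain rationality either from the weight-polytope description in the torus case or, more directly, from the Hilbert--Mumford numerical criterion---is exactly this Kirwan/Mumford route, so there is nothing to compare.
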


Thus there exists a finite number of elements $X_a\in \Gamma$ such that
$$C_K(\CH)=\{\xi\in i\t^*_{\geq 0}| \langle X_a,\xi\rangle \geq 0\}.$$
We say that the inequations $\langle X_a,\xi\rangle \geq 0$ are the inequations of the cone $C_K(\CH)$.
We may normalize $X_a$ to be a primitive element in $\Gamma$, the dual lattice to $\Lambda$.
The set $\Delta_K(\CH)$ is the intersection of $C_K(\CH)$ with the affine hyperplane $\la J,\xi\ra=1$
and $$C_K(\CH)=\R_{\geq 0} \Delta_K(\CH)$$ is the cone over the Kirwan polytope.

It is in general quite difficult to determine the explicit inequations of the cone $C_K(\CH)$.
An algorithm to describe the inequations of this cone, based on Ressayre's notion of dominant pairs, \cite{Ressayre-inventiones},
is given in Vergne-Walter, \cite{V-W}.
Let us first give the emblematic  example of the Horn cone.

\begin{example} Horn problem \end{example}
Consider $3$ Hermitian matrices $X,Y,Z$. Let $\spec(X),\spec( Y),\spec( Z)$ denote
 the list of eigenvalues of the Hermitian matrices $X,Y,Z$.
 The Horn inequations describe the range of the triple  $(\spec(X),\spec(Y),\spec(Z))$, when the $3$ matrices $X,Y,Z$ are constrained by the relation $X+Y+Z=0$.
This problem is related to the moment map for Example \ref{exa:Clebsch} as follows.

Consider the representation of $K=U(d) \times U(d) \times U(d)$ on
$\mathcal H = \mathfrak{gl}(d) \oplus \mathfrak{gl}(d).$
The moment map  is
\[ \Phi_K(A, B) = (A A^*, B B^*, -A^* A - B^* B). \]
Since any non-negative Hermitian matrix can be written in the form $A^* A$
and since the spectra of $A A^*$ and $A^* A$ are equal, the moment cone is equal to
\[ C_K(\CH) := \{ (\spec (X), \spec (Y), \spec (Z)) : X, Y \geq 0, Z \leq 0, ~ X + Y + Z = 0 \}, \]
As proved in \cite{Klyachko}, \cite{Knu-Tao01} (see also \cite{Belk}), the equations of $C_K(\CH)$ are given by the inductive system of inequalities
conjectured by Horn \cite{Horn62}.
These inequations are of the following form. Let $I,J,K$ be subsets of $[1,2,\ldots,d]$, all three of them  of cardinal $r< d$.
Let $E_I$ be the diagonal Hermitian matrix with $r$   eigenvalues $1$ in positions $I$, and others being $0$.
  Then  the triple $(E_I,E_J,E_K)$ gives rise to the inequation
 \begin{equation}\label{eqHorn}
 Tr(E_I D_1)+  Tr(E_JD_2)+ Tr(E_K D_3)\leq 0
 \end{equation}
   on triples $(D_1,D_2,D_3)$ of Hermitian diagonal matrices.

   Horn defined inductively, for every $r< d$, a set
$H(r,d)$ of  triples  $(I,J,K)$ of subsets of cardinal $r$ of  $[1,2,\ldots,d]$.
Then
 $C_K(\CH)$ is described by the inequations (\ref{eqHorn}) above, for
 all $(I,J,K)\in H(r,d)$ and all $r< d$, and the equation $Tr(D_1)+Tr(D_2)+Tr(D_3)=0.$

\bigskip

We now consider examples related to Kronecker coefficients.

\begin{example} (Bipartite case) \label{exa:Cauchy formula2}.
\end{example}
We return to Example \ref{exa:Cauchy formula}.
 Consider $\CH=\C^n\otimes \C^N$ with the action of $U(n)\times U(N)$.
   Using the Hermitian inner product,  we identify $A\in \CH$ to a matrix $A:\C^n\to \C^N$.
   Then the moment map  is given by
$$\Phi_K(A)=[AA^*, A^*A]$$ with value Hermitian matrices of size $n$ and size $N$ respectively.
The matrices $AA^*$
and $A^*A$ have the same non zero eigenvalues.
Assume that $N\geq n$.
Define
$$Pi\t^*_{{\mathfrak u}(n),\geq 0}=\{\xi=[\xi_1,\ldots,\xi_n]; \xi_1\geq \xi_2\geq \cdots \geq \xi_n\geq 0\},$$
and consider  $Pi\t^*_{{\mathfrak u}(n),\geq 0}$ as a subset of the positive Weyl chamber $i\t^*_{\geq 0}$ for $U(n)$.
There is a natural injection from $Pi\t^*_{{\mathfrak u}(n),\geq 0}$  to $Pi\t^*_{{\mathfrak u}(N),\geq 0}$   obtained just by adding more zeros
on the right of the sequence $\xi.$ We denote by $\tilde{\xi}$ the  new sequence so obtained.
Then we see that
the Kirwan cone $C_K(\CH)$ is the "diagonal"
$(\xi,\tilde \xi)$ with $\xi\in  Pi\t^*_{{\mathfrak u}(n),\geq 0}.$

$\Box$

In the above example, we see that the
cone  $C_K(\CH)$ may have empty interior in $i\t^*_{\geq 0}$.
The following general result holds.

\begin{lemma}
The cone $C_K(\CH)$ is a solid cone if and only if
there exists $v\in \CH$ so that the stabilizer $K_v$ of $v$
is a finite group.
\end{lemma}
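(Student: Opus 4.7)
The plan is to reduce solidity of $C_K(\CH)$ to the existence of some $v\in\CH$ at which $d\Phi_K(v)$ is surjective, and then identify this condition with finiteness of $K_v$ via the standard moment-map formula.

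\smallskip

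\textbf{Step 1 (Cone solidity $\Leftrightarrow$ image solidity).} First I would show that $C_K(\CH)$ is solid in $i\t^*$ if and only if $\Phi_K(\CH)$ has non-empty interior in $i\k^*$. The tool is the classical coadjoint diffeomorphism
$$K/T\times(i\t^*_{\geq 0})^\circ \xrightarrow{\sim} i\k^*_{\text{reg}}$$
identifying the set of regular elements of $i\k^*$ with a product involving the open Weyl chamber $(i\t^*_{\geq 0})^\circ$. Since $\Phi_K$ is $K$-equivariant, $\Phi_K(\CH)$ is a union of coadjoint orbits, and its intersection with $i\k^*_{\text{reg}}$ corresponds under the diffeomorphism to $(K/T)\times\bigl(C_K(\CH)\cap(i\t^*_{\geq 0})^\circ\bigr)$. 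For $(\Rightarrow)$, any interior point of $C_K(\CH)$ in $i\t^*$ must lie in $(i\t^*_{\geq 0})^\circ$ (an open ball of $i\t^*$ contained in $i\t^*_{\geq 0}$ sits inside the open chamber), so sweeping a small open neighbourhood by $K$ yields an open set in $\Phi_K(\CH)$. Conversely, a non-empty open $U\subset\Phi_K(\CH)$ meets $i\k^*_{\text{reg}}$ and, via the inverse diffeomorphism, projects to an open subset of $(i\t^*_{\geq 0})^\circ$ contained in $C_K(\CH)$.

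\smallskip

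\textbf{Step 2 (Image solidity $\Leftrightarrow$ differential surjectivity).} Next I would establish that $\Phi_K(\CH)$ has non-empty interior in $i\k^*$ if and only if $d\Phi_K(v)$ is surjective for some $v$. The direction $(\Leftarrow)$ is the submersion theorem: $\Phi_K$ is an open map near such a $v$. For $(\Rightarrow)$, the map $\Phi_K$ is polynomial (quadratic in $v$), so its differential has a generic rank $r$ attained on a Zariski-open dense subset $U\subset\CH$; the complement is a proper real-algebraic set. If $r<\dim K$, the constant-rank theorem writes $\Phi_K(U)$ as a countable union of $r$-dimensional real-analytic submanifolds, hence of measure zero in $i\k^*$, while Sard's theorem handles $\Phi_K(\CH\setminus U)$. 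Non-empty interior would then be impossible, forcing $r=\dim K$.

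\smallskip

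\textbf{Step 3 (Stabilizer identification).} Finally, the standard moment-map identity asserts that the image of $d\Phi_K(v)$ equals the annihilator in $i\k^*$ of the Lie algebra $\k_v$ of $K_v$. Using $X^*=-X$ one computes directly
$$d\Phi_K(v)(\dot v)(X)=\langle X\dot v,v\rangle+\langle Xv,\dot v\rangle=2i\,\Im\langle Xv,\dot v\rangle,$$
which vanishes for every $\dot v$ exactly when $Xv=0$, i.e.\ $X\in\k_v$. Hence $d\Phi_K(v)$ is surjective iff $\k_v=0$, and since $K_v$ is a closed (hence compact) subgroup of $K$, this is equivalent to $K_v$ being finite. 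Chaining the three equivalences proves the lemma.

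\smallskip

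The main obstacle is the bookkeeping of Step 1: one must carefully use both that interior points of $C_K(\CH)$ inside the ambient $i\t^*$ cannot sit on a Weyl wall (otherwise they would fail to be interior there), and that the coadjoint parameterization is a genuine diffeomorphism on the regular locus so that open sets transport cleanly in both directions. Steps 2 and 3 are then standard facts about real-analytic maps and about symplectic moment maps respectively.
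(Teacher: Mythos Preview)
The paper does not actually prove this lemma; it is stated without proof as a standard fact, immediately followed by a return to the Kronecker discussion. So there is nothing to compare against on the paper's side.

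Your argument is correct and complete. Step~3 is the standard moment-map identity $\operatorname{Im}d\Phi_K(v)=(\k_v)^\perp$, which you verify directly; Step~2 is a clean application of the submersion theorem together with a generic-rank/Sard argument for a polynomial map; and Step~1 correctly transports openness back and forth between $i\t^*$ and $i\k^*$ via the coadjoint parameterization $K/T\times(i\t^*_{\geq 0})^\circ\cong i\k^*_{\mathrm{reg}}$, using that the Weyl walls have empty interior in $i\t^*$ and that the projection $K/T\times(i\t^*_{\geq 0})^\circ\to(i\t^*_{\geq 0})^\circ$ is open. The one point worth making explicit in Step~1$(\Leftarrow)$ is that $\Phi_K(\CH)$ is $K$-invariant (since $\Phi_K$ is equivariant), so if $k\cdot\xi\in\Phi_K(\CH)$ then $\xi\in\Phi_K(\CH)$; this is what guarantees the projected open set actually lands in $C_K(\CH)$. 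With that remark your proof stands.
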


Return to the Kronecker case.

Consider $\CH=\C^{n_1}\otimes \C^{n_2}\otimes \cdots\otimes \C^{n_s}$
with action of $K=U(n_1)\times \cdots \times U(n_s)$ on $\CH$.

We may assume $n_1\geq n_2\geq \cdots \geq  n_s$, and let $M=n_2n_3\cdots n_s$.
The corresponding Kirwan cone $C_K(\CH)$ is a subset of
$$Pi\t^*_{{\mathfrak u}(n_1),\geq 0} \oplus Pi\t^*_{{\mathfrak u}(n_2),\geq 0}\oplus \cdots\oplus
Pi\t^*_{{\mathfrak u}(n_s),\geq 0}.$$
Then if $n_1\geq M$, this cone stabilizes in the sense
that $C_K(\CH)$ coincides with  the cone associated with
 the sequence $(M, n_2,n_3,\ldots, n_s)$ embedded by sending
 $Pi\t^*_{{\mathfrak u}(M),\geq 0}$ to $Pi\t^*_{{\mathfrak u}(n_1),\geq 0}$
 by the map $\xi\to \tilde \xi$.
Furthermore, considering the action of the center, we see that $C_K(\CH)$ is contained in the  subspace
$E=\{(y_1,\ldots,y_s)\}$ of $i\t^*_{\k}$ defined by  the $s-1$ linear equalities ${\rm Tr}(y_1)={\rm Tr}(y_2)=\cdots ={\rm Tr}(y_s)$.

For later use, we recall the following result (see \cite{V-W}).
\begin{proposition}\label{pro:solidVW}

Let $\CH=\C^{n_1}\otimes \C^{n_2}\otimes \cdots\otimes \C^{n_s}$
with action of $K=U(n_1)\times \cdots \times U(n_s)$ on $\CH$.
Assume that all $n_a$ are greater or equal to $2$, and that $s$ is greater or equal to $3$.
Assume also that $n_1=max(n_i)$ and $n_1\leq n_2\cdots n_s$.
Then $C_K(\CH)$ is solid in the subspace $E$.
\end{proposition}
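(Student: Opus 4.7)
The preceding lemma characterizes solidity of the Kirwan cone via the existence of a point with finite stabilizer. I apply it to the faithful action of $\overline{K} := K/Z_0$, where $Z_0 \subset K$ is the kernel of the $K$-action on $\CH$. Since any element $(z_1 I, \ldots, z_s I)$ of the center of $K$ acts on a tensor $v$ by the scalar $z_1 z_2 \cdots z_s$, the kernel $Z_0$ is the $(s-1)$-dimensional subtorus cut out by $z_1 z_2 \cdots z_s = 1$. The moment map for $\overline{K}$ takes values in $(i\k/\z_0)^* \cong E$, and $C_{\overline{K}}(\CH) = C_K(\CH)$. So the lemma reduces the proposition to finding $v \in \CH$ with $\dim K_v = \dim Z_0 = s-1$, i.e.\ with $(\overline{K})_v = K_v/Z_0$ finite.

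\textbf{Step 1 (necessary dimension count).} The generic $K$-orbit must fit in $\CH$, so the bound $\dim K - (s-1) \leq \dim_\R \CH$ is necessary. Set $M = n_2 \cdots n_s$. The hypothesis $n_1 \leq M$ gives $n_1^2 \leq n_1 M$; the hypothesis $n_i \leq n_1$ for $i \geq 2$ gives $\sum_{i \geq 2} n_i^2 \leq n_1 \sum_{i \geq 2} n_i$; and since every $n_i \geq 2$ with $s-1 \geq 2$ such factors, an elementary induction yields $\sum_{i \geq 2} n_i \leq \prod_{i \geq 2} n_i = M$. Adding, $\dim K = \sum_i n_i^2 \leq 2 n_1 M = \dim_\R \CH$, with plenty of room.

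\textbf{Step 2 (producing $v$).} The locus of $v \in \CH$ whose stabilizer has minimal dimension is Zariski open, so it is enough to exhibit a single such $v$. A natural candidate is a generic perturbation of the ``diagonal'' tensor $v_0 = \sum_{j=1}^{\min_i n_i} e_j^{(1)} \otimes \cdots \otimes e_j^{(s)}$: the infinitesimal stabilizer equation $\sum_i (I \otimes \cdots \otimes X_i \otimes \cdots \otimes I)\, v = 0$ with $X_i \in \u(n_i)$ becomes, at $v_0$, a linear system already forcing each $X_i$ to be diagonal on the range indexed by $1, \ldots, \min_i n_i$; adding a generic perturbation breaks the remaining degrees of freedom and leaves only the scalar solutions $X_i = i\lambda_i\, I$ with $\sum_i \lambda_i = 0$, i.e.\ exactly $\z_0$.

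\textbf{Main obstacle.} The crux is Step 2: the dimension bound of Step 1 is only necessary, and one must check it is actually attained by ruling out accidental infinitesimal symmetries beyond $\z_0$. A rigorous argument requires tracking the rank of the linearized stabilizer system at a sufficiently generic tensor, and in particular handling the case where some $n_i$ is strictly less than others, so that the seed $v_0$ itself carries extra symmetry that must be killed by the perturbation. The detailed argument, in the spirit of generic-stabilizer theory for linear actions of reductive groups, and including the induction on $s$ that reduces to the bipartite case of Example~\ref{exa:Cauchy formula2} together with the smallest triple-tensor cases, is carried out in \cite{V-W}.
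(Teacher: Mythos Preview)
The paper does not give its own proof of this proposition: it merely states it as a result ``recalled'' from \cite{V-W}. So there is nothing to compare your argument against in the paper itself.

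Your sketch is a reasonable outline of the standard strategy. The reduction to finding a tensor with stabilizer of dimension $s-1$ (via the preceding lemma applied to $K/Z_0$) is correct, and your Step~1 dimension count is fine: the inequalities $n_1^2\le n_1M$, $\sum_{i\ge 2}n_i^2\le n_1\sum_{i\ge 2}n_i$, and $\sum_{i\ge 2}n_i\le\prod_{i\ge 2}n_i$ (the latter by induction using $(a-1)(b-1)\ge 1$ for $a,b\ge 2$) indeed give $\dim K\le 2n_1M=\dim_\R\CH$. But Step~1 is only a necessary condition, and you correctly identify that Step~2 is where the real content lies. Your candidate $v_0$ plus a ``generic perturbation'' is plausible heuristics, not a proof: one must actually verify that the linearized stabilizer system has rank $\dim K-(s-1)$ at some concrete tensor, and your discussion does not do this. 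Since you yourself defer the crux to \cite{V-W}, your proposal is, like the paper, ultimately a citation rather than a self-contained proof---just with more scaffolding around it.
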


A few examples of  $(n_1,n_2,\ldots, n_s)$ where the inequations of the cone
$C_K(\CH)$ are known are:

$(2,2,\ldots, 2)$  (with any number  $N$ of $2$: the cone of $N$-qubits,), (Higuchi--Sudbery-Sultz \cite{H-Su-Sz}).

$(3,3,3)$ (Franz \cite{MF}),

 $(4,2,2)$ (Briand \cite{BOR2}, Bravyi \cite{BR}),

$(6,3,2)$, $(9,3,3)$ (Klyachko \cite{Klyachko}),

$(4,4,4)$ and $(12,3,2,2)$ (Vergne-Walter, \cite{V-W})
(this last case being incorrect in Klyachko).

The general case seems for the moment out of reach.

\begin{example}\label{ex:bravyis}\end{example}
Let us write explicitly  Higuchi-Sudbery-Szulc  description of the cone of $N$-qubits (see also \cite{BR}).
Consider $\lambda_1=[\lambda_1^1,\lambda_2^2],\ldots, \lambda_N=[\lambda_1^N,\lambda_2^N]$ a sequence of $N$ elements
of $Pi\t^*_{\u(2),\geq 0}$ ( that is $\lambda_j^1\geq \lambda_j^2\geq 0$).
We assume that ${\rm Tr}(\lambda_1)={\rm Tr}(\lambda_2)=\cdots={\rm Tr}
(\lambda_N)$.
Then $(\lambda_1,\ldots,\lambda_N)\in C_{U(2)\times \cdots\times U(2)}(\C^2\otimes \cdots \otimes \C^2)$
if and only if
$$\lambda_j^2\leq \sum_{k\neq j} \lambda_k^2,$$ for any $j=1,2,\ldots, N$.

$\Box$

\begin{example} {Quantum marginals}\end{example}
Let $A,B,C$ be integers. If $v \in \C^A\otimes \C^B\otimes \C^C$ is a pure state,  its quantum marginals are defined as follows.
Consider $v$ as an operator $v: \C^A \rightarrow \C^B \otimes \C^C$  and define $\rho_A(v)=v^* v$.
 Define $\rho_B(v),\rho_C(v)$  similarly.
 Then $\rho_A(v), \rho_B(v), \rho_C(v)$ are  densities matrices of size $A,B,C$ respectively. They are called the quantum marginals of $v$.

 Consider $\CH= \C^A\otimes \C^B\otimes \C^C$, with action of $K=U(A)\times U(B) \times U(C)$.
 The moment map $\Phi_K: \C^A\otimes \C^B\otimes \C^C\to i\u(A)^*\oplus i\u(B)^*\oplus i\u(C)^*$
 is $$\Phi_K(v)=(\rho_A(v),\rho_B(v),\rho_C(v)).$$

 Given $3$ densities matrices $\rho_A,\rho_B,\rho_C$, the quantum marginal problem is to determine if
 there exists a pure state $v\in \C^A \otimes \C^B \otimes \C^C$ with quantum marginals $\rho_A,\rho_B,\rho_C$.
 We see that this pure state exists if and only if  $(\rho_A,\rho_B,\rho_C)$ is in the image of the moment map $\Phi_K$.
 It is thus necessary and sufficient that the triple of spectrums $(\spec(\rho_A),\spec(\rho_B),\spec(\rho_C))$ of $(\rho_A,\rho_B,\rho_C)$
 satisfy a certain number of linear inequalities.
 Unfortunately, there is not a good understanding of what are these inequalities, except in the few low dimensional cases quoted previously.

\subsection{Duistermaat-Heckman measure on the moment cone}

\bigskip
For simplicity, we assume that the moment cone $C_K(\CH)$ intersects the interior of the Weyl chamber. Of course, this is the case
when $C_K(\CH)$ is solid.
We also assume (we can always reduce easily to this case) that the kernel of the homomorphism $K\to U(\CH)$ is trivial.

\begin{definition}
Define
$$r=\dim_\C (\CH)- |\Delta_\k^+|-\dim C_K(\CH).$$
\end{definition}
Then, if the cone $C_K(\CH)$ is solid,
$$r=\dim_\C (\CH)- |\Delta_\k^+|-\dim \t.$$

Consider the Lebesgue measure $dv$ on $\CH$ (considered as a real vector space).
If $\xi\in i\k^*$, there is a natural measure $\beta_\xi$ on the coadjoint orbit $K\xi$ of $\xi$
determined by the symplectic structure of $K\xi$ (see \cite{Kiri}).
 Consider the Duistermaat-Heckman measure $DH^{\CH}_K(\xi)$ supported on the polyhedral cone $C_K(\CH)$,  determined by
$$\int_V f(\Phi_K(v)) dv=\int_{\xi\in C_K(\CH)} \big(\int_{K\xi}fd\beta_\xi\big) DH^{\CH}_K(\xi).$$
Here $f$ is a compactly supported continuous function on $i\k^*$.
In short, we divide the push forward $(\Phi_K)_*(dv)$ of the Lebesgue measure $dv$ on $\CH$  by the Kirillov measure
of the orbits in the image.
This measure is strictly positive on the relative interior of $C_K(\CH)$.

 Consider the Lebesgue measure $d\xi$ on the vector space
$lin(C_K(\CH))$
spanned by $C_K(\CH)$. Here we normalize $d\xi$ so that it gives mass $1$ to   a fundamental domain of $\Lambda\cap lin(C_k(\CH))$.

The following theorem follows from  Duistermaat-Heckman \cite{Dui-Hec}.
\begin{theorem} \label{multDH}
There exists a cone decomposition
$C_K(\CH)=\cup_{a\in F} \c_a$,  in {\bf closed} polyhedral cones $\c_a$,
and for each $a$, there exists a  homogeneous polynomial function
$d_{K,a}^{\CH}$ of degree $r$ on  $lin(C_K(\CH))$ such that
$$DH^{\CH}_K(\xi)=d_{K,a}^{\CH}(\xi)d\xi$$ if
$\xi\in \c_a$.
\end{theorem}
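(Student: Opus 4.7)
The plan is to reduce to the abelian (torus) case and then pass back to $K$ via Heckman's differential formula. The abelian half is the classical piecewise polynomial structure of the continuous vector partition function (multivariate $B$-spline).

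\emph{Step 1 (torus case).} Diagonalize the $T$-action to write $\CH = \bigoplus_{j=1}^N \C_{\alpha_j}$, where $N = \dim_\C\CH$ and $\alpha_j \in \Lambda$ are the $T$-weights. Then $\Phi_T(v) = \sum_j |v_j|^2 \alpha_j$, and setting $s_j = |v_j|^2 \geq 0$, the Lebesgue measure on $\CH$ factors as $\pi^N \prod_j ds_j$ times the Haar measure on the angular $N$-torus (which integrates to $1$). Pushing forward,
$$DH^{\CH}_T(\xi) = \pi^N \int_{\R_{\geq 0}^N} \delta\!\Big(\xi - \sum_{j=1}^N s_j \alpha_j\Big)\,ds_1\cdots ds_N.$$
Triviality of the kernel of $K \to U(\CH)$ forces the $\alpha_j$ to span $i\t^*$ and to lie in an open half-space, so this is a locally $L^1$ function supported on the cone $\sum_j \R_{\geq 0}\alpha_j$. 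By induction on $N$ via the convolution recursion $DH^{\CH}_T(\xi) = \int_0^\infty DH^{\CH'}_T(\xi - s\alpha_j)\,ds$ with $\CH'$ the complement of $\C_{\alpha_j}$ in $\CH$, one shows $DH^{\CH}_T$ is homogeneous of degree $N - \dim\t$ and piecewise polynomial on the cell decomposition of $i\t^*$ whose walls are the hyperplanes spanned by $(\dim\t - 1)$-subsets of $\{\alpha_j\}$ (Dahmen--Micchelli splines; cf.\ Brion--Vergne).

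\emph{Step 2 (passage to $K$).} Both Fourier--Laplace transforms of the push-forwards $(\Phi_T)_*(dv)$ and $(\Phi_K)_*(dv)$ can be evaluated at $X \in \t$: the former equals $\prod_j \pi/(-\langle \alpha_j, X\rangle)$ by Gaussian integration on $\CH$, while the latter, after decomposing $(\Phi_K)_*(dv)$ through coadjoint orbits and applying the Harish-Chandra formula $\int_{K\xi} e^{\langle\eta,X\rangle}d\beta_\xi = \big(\sum_{w\in\CW_\k}\epsilon(w) e^{\langle w\xi,X\rangle}\big)/\prod_{\alpha>0}\langle\alpha,X\rangle$, becomes $\prod_\alpha\langle\alpha,X\rangle^{-1}$ times the Fourier transform of the $\CW_\k$-antisymmetric extension $\widetilde{DH^{\CH}_K}$ of $DH^{\CH}_K$ to $i\t^*$. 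Since $\langle\Phi_K(v),X\rangle = \langle\Phi_T(v),X\rangle$ for $X\in\t$, the two transforms coincide, and converting multiplication by $\langle\alpha,X\rangle$ into the differential operator $-\partial_{H_\alpha}$ yields Heckman's identity
$$c_K\cdot \widetilde{DH^{\CH}_K}(\xi) = \prod_{\alpha \in \Delta_\k^+}\partial_{H_\alpha}\,DH^{\CH}_T(\xi).$$
Because the multiset $\{\alpha_j\}$ of a $K$-representation is $\CW_\k$-invariant, the cell decomposition of Step 1 is $\CW_\k$-stable. The operator $\prod_\alpha \partial_{H_\alpha}$ preserves piecewise polynomiality on this decomposition and lowers the homogeneous degree by $|\Delta_\k^+|$. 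Intersecting the resulting cells with the positive Weyl chamber and with $C_K(\CH)$ produces the decomposition $C_K(\CH) = \bigcup_a \c_a$ and polynomials $d_{K,a}^{\CH}$ of degree $N - \dim\t - |\Delta_\k^+| = r$ in the solid case; in the non-solid case one first factors out the generic $T$-stabilizer of $\CH$, replacing $\dim\t$ by $\dim C_K(\CH)$.

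\emph{Main obstacle.} The crux is establishing Heckman's formula; once it is in hand, piecewise polynomiality of $DH^{\CH}_K$ follows immediately from Step 1. The Fourier-transform derivation sketched above is formal, and the care needed lies in normalizing the Kirillov orbital measure $\beta_\xi$ (to pin down the constant $c_K$ and to make the degree count come out to exactly $r$) and in verifying the identity as distributions up to boundary contributions on the faces of the moment cone. An alternative, more geometric, route is the Duistermaat--Heckman stationary-phase argument applied directly to $\CH$, which in this linear setting amounts to expanding the rational function $\prod_j \pi/(-\langle\alpha_j,X\rangle)$ via a Jeffrey--Kirwan decomposition over the chambers of the $\{\alpha_j\}$-arrangement.
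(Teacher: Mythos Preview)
The paper does not give its own proof of this theorem; it is stated as a consequence of Duistermaat--Heckman \cite{Dui-Hec} without further argument. Your approach---reducing to the torus case (where $DH^{\CH}_T$ is the continuous vector partition function, piecewise polynomial of degree $N-\dim\t$) and then passing to $K$ via Heckman's differential identity $dh^{\CH}_K=(\prod_{\alpha>0}\partial_{-\alpha})\,d_a^{\Psi}$---is exactly the method the paper itself adopts later, in Theorem~\ref{theo:reductoT} and Theorem~\ref{theo:DHK}, to make the statement explicit and computable. So your proposal is correct and fully in line with the paper's own machinery; the ``main obstacle'' you flag (normalizations and distributional care in Heckman's formula) is real but standard, and the paper sidesteps it by citing \cite{Dui-Hec} and \cite{Heck} rather than rederiving it.
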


We denote by $dh^{\CH}_K$ the function on $C_K(\CH)=\cup \c_a$ such that $dh_K^{\CH}=d_{K,a}^{\CH}$ on $\c_a$.
So we obtain    a piecewise polynomial function $dh^{\CH}_K$  with support $C_K(\CH)$ and continuous on  $C_K(\CH)$.
We call the $\c_a$ {\bf  chambers of   polynomiality } (for  the  Duistermaat-Heckman measure).
It is already quite difficult to determine the cone $C_K(\CH)$, so even more so to determine the chambers of polynomiality $\c_a$.

The function $dh^{\CH}_K(\xi)$ is related to the volume of the reduced fiber of $\CH$.

\begin{definition}
Let $\xi\in i\k^*$, we define the reduced fiber  of $\CH$ at $\xi$  by
$\CH_{red,\xi}=\Phi_K^{-1}(K\xi)/K$.
\end{definition}
In other words, the reduced space at $\xi$ consists in all orbits $Kv$ of $K$ in $\CH$
 projecting on the orbit $K\xi$ under the moment map.
Via the identification of $\CH_{red,\xi}$ with the GIT  (geometric invariant theory) quotient of semi-stable orbits under $K_\C$,
 the reduced space can be provided with a structure of projective variety (see \cite{git}).

If all orbits of $K$ in  $\Phi_K^{-1}(K\xi)$ have the same dimension,
the reduced space is an orbifold. We say that $\xi$ is a quasi-regular value.
In particular, if the map $k\to kv$ is injective for all $v\in \CH$ projecting on $\xi$, we say that the action of $K$ on
  $\Phi_K^{-1}(K\xi)$ is free. In this case $\xi$ is a regular value of $\Phi_K$, and
 $\CH_{red,\xi}$ is a smooth manifold.
 Furthermore,  $\CH_{red,\xi}$ inherits a symplectic structure $\Omega_\xi$ from the symplectic
 form $\Omega$ on $\CH$: the restriction of $\Omega$ to
$\Phi_K^{-1}(K\xi)$ is the pull back of a form $\Omega_\xi$ on $\CH_{red,\xi}$.
For $\xi$ in the relative interior of the Kirwan cone,  and a quasi-regular value, then
the reduced space $\CH_{red,\xi}$ is of dimension $r$.
Duistermaat-Heckman theorem implies that
$dh^{\CH}_K(\xi)$ is the volume of the symplectic space $\CH_{red,\xi}$.
So when we restrict the function $dh^{\CH}_K$ to the line $t\xi$ ($\xi$ in the interior of $C_K(\CH)$ and quasi-regular), the function $t\to dh^{\CH}_K(t\xi)$ is
homogeneous of degree  equal to $r$.
This is also true without the quasi-regularity assumption.
\begin{lemma}
If $\xi$ is in the relative interior of the cone $C_K(\CH)$,
then $dh^{\CH}_K(t\xi)=t^{r} dh^{\CH}_K(\xi).$
\end{lemma}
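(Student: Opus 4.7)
The plan is to reduce the claim directly to Theorem \ref{multDH} by exploiting the fact that the chamber decomposition $C_K(\CH)=\cup_{a\in F}\c_a$ consists of \emph{cones}, not merely polytopes.

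First, recall the content of Theorem \ref{multDH}: each $\c_a$ is a closed polyhedral cone, and on $\c_a$ the function $dh^{\CH}_K$ agrees with a homogeneous polynomial $d^{\CH}_{K,a}$ of degree $r$ on $\lin(C_K(\CH))$. Fix $\xi$ in the relative interior of $C_K(\CH)$ and choose any index $a\in F$ such that $\xi\in\c_a$ (at least one exists since the $\c_a$ cover $C_K(\CH)$).

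Next, I would use the cone property: since $\c_a$ is a cone (invariant under positive homotheties) and $\xi\in\c_a$, we have $t\xi\in\c_a$ for every $t\geq 0$. Hence the entire ray $\{t\xi : t\geq 0\}$ lies inside a single chamber of polynomiality, and on this ray the function $dh^{\CH}_K$ is given by a single polynomial expression:
\begin{equation*}
dh^{\CH}_K(t\xi)\;=\;d^{\CH}_{K,a}(t\xi)\;=\;t^{r}\,d^{\CH}_{K,a}(\xi)\;=\;t^{r}\,dh^{\CH}_K(\xi),
\end{equation*}
where the middle equality uses that $d^{\CH}_{K,a}$ is homogeneous of degree $r$. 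This is exactly the claim.

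There is no real obstacle here; the only subtlety worth flagging is the consistency of the value at $\xi$ when $\xi$ lies on a common face of several chambers $\c_a,\c_b$. In that case both $d^{\CH}_{K,a}(\xi)$ and $d^{\CH}_{K,b}(\xi)$ represent $dh^{\CH}_K(\xi)$, so by continuity of $dh^{\CH}_K$ on $C_K(\CH)$ (also part of Theorem \ref{multDH}) the two polynomial evaluations agree, and the choice of $a$ in the argument above is immaterial. Note also that the assumption that $\xi$ is in the \emph{relative interior} of $C_K(\CH)$ is needed only so that $dh^{\CH}_K(\xi)$ is nonzero in typical cases and the statement has content; the homogeneity identity itself holds for any $\xi\in C_K(\CH)$ by the same argument.
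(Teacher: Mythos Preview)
Your proof is correct and is exactly the argument implicit in the paper: the lemma is stated immediately after Theorem \ref{multDH} without a separate proof, since it follows at once from the fact that the chambers $\c_a$ are cones and that $dh^{\CH}_K$ restricts to a homogeneous polynomial of degree $r$ on each of them. Your observation that the relative-interior hypothesis is not strictly needed for the identity itself is also accurate.
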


So the homogeneous degree on a line stays constant for interior lines.
But typically, the function $dh^{\CH}_K$ vanishes on the boundary, except if it is constant on $C_K(\CH)$.

\begin{example}(The Bloch sphere)\end{example}
Let us consider a very simple example.

Let $T$ be a two dimensional torus, with Lie algebra $\t=\R J_1\oplus \R J_2$.
Let $\t^*$ with dual basis $J^1,J^2$.
We consider the diagonal action of   $T$  on $\CH=\C^2$, by
$\exp(\theta_1 J_1+\theta_2 J_2)(z_1,z_2)=(e^{i\theta_1} z_1,e^{i\theta_2} z_2)$.
Thus the weights $\phi_1,\phi_2$ of the action of $T$ on $\C^2$ are
$\phi_1=iJ^1, \phi_2=i J^2$.

 The  space of pure states divided by the action of $(e^{i\theta},e^{i\theta})$ is the Bloch sphere.
The Kirwan polytope is the segment $[\phi_1,\phi_2] $ in $\R \phi_1\oplus \R \phi_2=i\t^*$.
The Kirwan cone is the cone $\R_{\geq 0}\phi_1 \oplus\R_{\geq 0}\phi_2$.
The reduced fibers are points, and the Duistermaat Heckman measure is the characteristic function on this cone.
This is just Archimede result for which the area on the sphere between two latitudes depends only of the difference of their heights on the $z$ axes as one can see looking at the picture Figure
\ref{Kirwansphere}.
\begin{figure}[hh]
\centering
\includegraphics[height=1.5 in]{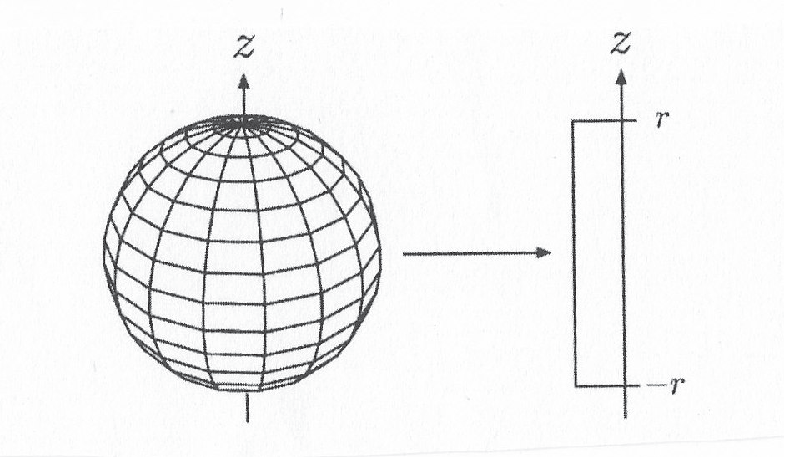}
\caption{The Kirwan polytope}
\label{Kirwansphere}
\end{figure}

\subsection{Moment map and multiplicities}

\bigskip
We continue  (for simplicity) to assume that $C_K(\CH)$  intersects the interior of the Weyl chamber
and that the kernel of the homomorphism $K\to U(\CH)$ is trivial.

Consider the decomposition
$$Sym(\CH)=\oplus_{\lambda\in \hat K} m_K^{\CH}(\lambda) V_\lambda^K.$$

The cone $C_K(\CH)$  is related to the multiplicities through the following basic result, which is a particular case of Mumford theorem
\cite{Mum}(a proof of Mumford theorem, following closely Mumford argument, can be found in \cite{Be}).
\begin{proposition}
We have $m_K^{\CH}(\lambda)=0$ if $\lambda\notin C_K(\CH)$.
Conversely, if $\lambda$ is a  dominant weight belonging to
$C_K(\CH)$, there exists an integer $k>0$ such that
$m_K^{\CH}(k\lambda)$  is  non zero.
\end{proposition}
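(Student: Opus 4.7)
The proposition is the qualitative form of Mumford's theorem; we sketch both directions via the Kempf-Ness / GIT correspondence.

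\textbf{Forward direction.} Suppose $m_K^{\CH}(\lambda)>0$. Dualize the embedding $V_\lambda^K\hookrightarrow Sym^k(\CH)$ to obtain a non-zero $K$-equivariant polynomial map $F\colon\CH\to V_\lambda^*$ of degree $k\geq 1$ (sending $v$ to the image of $v^{\otimes k}$ under the projection $Sym^k(\CH)^*\twoheadrightarrow V_\lambda^*$). The ratio $\phi(v)=\|F(v)\|_{V_\lambda^*}^2/\|v\|^{2k}$ is $K$-invariant, non-negative, and descends to a continuous function on $\PP(\CH)$, attaining a positive maximum at some $[v^*]$. Differentiating $\log\phi$ along infinitesimal directions $iX\cdot v^*$ for $X\in\k$ at this critical point yields the identity
\[
\Phi_K^{V_\lambda^*}(F(v^*))=k\,\Phi_K(v^*)
\]
up to a positive scalar, where $\Phi_K^{V_\lambda^*}$ denotes the moment map for the $K$-action on $V_\lambda^*$. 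Since $F(v^*)$ lies on the $K_\C$-orbit of a highest-weight vector of $V_\lambda^*$, its moment image is a positive multiple of $\lambda$. Hence $\Phi_K(v^*)\in\R_{>0}\lambda$, and since $C_K(\CH)$ is a cone and $\lambda$ is dominant, $\lambda\in C_K(\CH)$.

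\textbf{Converse.} Given a dominant $\lambda\in C_K(\CH)$, pick $v_0\in\CH$ with $\Phi_K(v_0)=\lambda$. Let $X_\lambda\subset V_\lambda^*$ be the affine cone over the $K_\C$-orbit of a lowest-weight vector of $V_\lambda^*$; as a $K_\C$-module, $\C[X_\lambda]=\bigoplus_{j\geq 0}V_{j\lambda}^*$ (the ``Cartan component'' decomposition). The moment map on $V_\lambda^*$ sends $X_\lambda$ into $-\R_{\geq 0}\cdot K\lambda$, so one can choose $u\in X_\lambda$ with $\Phi_K^{V_\lambda^*}(u)=-\lambda$; thus $(v_0,u)$ lies in the zero set of the shifted moment map $\Psi(w,u')=\Phi_K(w)+\Phi_K^{V_\lambda^*}(u')$ on $\CH\oplus X_\lambda$. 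By Kempf-Ness / Mumford, non-emptiness of $\Psi^{-1}(0)$ is equivalent to non-emptiness of the corresponding GIT quotient, which in turn is equivalent to the existence of a non-zero $K_\C$-invariant in
\[
(Sym(\CH)\otimes\C[X_\lambda])^{K_\C}=\bigoplus_{j\geq 0,\,k'\geq 0}\hom_K(V_{j\lambda},Sym^{k'}(\CH)),
\]
so this hom-space is non-zero for some $j,k'\geq 1$. Equivalently, $m_K^{\CH}(j\lambda)>0$ for some $j\geq 1$. The dilation factor is forced by the integrality of the Borel-Weil construction underlying $X_\lambda$, while the moment-cone condition is only a real/rational statement.

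\textbf{Main obstacle.} The converse is the substantive direction. The forward implication reduces to a norm-maximization argument at a critical point of a $K$-invariant function on $\PP(\CH)$, essentially an application of Cauchy-Schwarz/Kempf-Ness to a single orbit. The converse genuinely needs the full Kempf-Ness / Mumford correspondence between zeros of the shifted moment map on the product with the highest-weight orbit and the existence of non-zero semi-invariants under the complexified action; there is no shortcut via elementary representation theory.
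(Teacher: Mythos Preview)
The paper does not prove this proposition; it records it as a particular case of Mumford's theorem and points to \cite{Mum} and \cite{Be} for the argument. So the only question is whether your sketch stands on its own.

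Your \emph{converse} is the standard shifting trick via Kempf--Ness and is correct in outline (modulo some $\CH$/$\CH^*$ bookkeeping: polynomial functions on $\CH$ form $Sym(\CH^*)$, not $Sym(\CH)$, so one has to keep track of which side one is on; this is a matter of signs, not substance).

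The \emph{forward} direction has a real gap. From the critical-point condition for $\phi$ you correctly obtain, up to positive scalars,
\[
\frac{\Phi_K^{V_\lambda^*}\bigl(F(v^*)\bigr)}{\|F(v^*)\|^2}\;=\;k\,\frac{\Phi_K(v^*)}{\|v^*\|^2}\,.
\]
You then assert that $F(v^*)$ lies on the $K_\C$-orbit of a highest-weight vector, so that the left-hand side equals $\lambda$. Nothing in your argument forces this. First, your description of $F$ does not typecheck: $v^{\otimes k}$ lives in $Sym^k(\CH)$, not $Sym^k(\CH)^*$, so the natural equivariant map coming from $V_\lambda\hookrightarrow Sym^k(\CH)$ is the orthogonal projection $v\mapsto\pi(v^{\otimes k})\in V_\lambda$, not a map to $V_\lambda^*$. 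Second, and more importantly, $\pi(v^{\otimes k})$ is for generic $v$ a generic vector of $V_\lambda$, not one on the minimal $K_\C$-orbit; the normalized $K$-moment image of a generic point of $\PP(V_\lambda)$ is a generic point of the Kirwan polytope of $\PP(V_\lambda)$, not $\lambda$. The displayed identity therefore only tells you that \emph{some} dominant $\mu$ in that polytope lies on a ray of $C_K(\CH)$; it does not tell you that $\lambda$ does, and $\mu\in C_K(\CH)$ does not imply $\lambda\in C_K(\CH)$.

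The clean fix is to run the same product construction you use for the converse in the forward direction as well: $m_K^{\CH}(\lambda)>0$ gives a nonconstant $K_\C$-invariant on the product with $X_\lambda$, and Kempf--Ness then furnishes a nonzero point in the zero level of the product moment map, i.e.\ a $v\in\CH$ with $\Phi_K(v)\in K\lambda$. Both implications are instances of one and the same Kempf--Ness correspondence; the attempted shortcut via a single equivariant map into $V_\lambda$ does not close.
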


Thus the support of the function $m_K^{\CH}(\lambda)$ is contained in the Kirwan polyhedron
$C_K(\CH)$ and its asymptotic support is exactly the cone $C_K(\CH)$.

\begin{example} (The Cauchy formula, next) \end{example}

Return to the example of the Cauchy formula.
 Let $\CH=\C^n\otimes \C^N$ under the action of $U(n)\times U(N)$.
 The Kirwan cone has been determined in Example \ref{exa:Cauchy formula2}.
We see that  the multiplicity function  determined in Example \ref{exa:Cauchy formula} is supported exactly on the set
$\Lambda\cap C_K(\CH)$ (and with multiplicity $1$). $\Box$

\bigskip

It is interesting to understand, for a given $\lambda\in C_K(\CH)\cap \Lambda_{\geq 0}$, what is the smallest positive $k$ such that
$m_K^{\CH}(k\lambda)\neq 0$. We call this $k$ the saturation factor. We will give one example,  Example \ref{firstnonzero},
 where $k=2.$ We would have liked to find larger $k$, but we could not succeed.

\bigskip

We now describe the nature of the function $m_K^{\CH}(\lambda)$ on $C_K(\CH)$.
For this, we need to recall the definition of a periodic polynomial function.

Consider  a  real vector space $E$ with a lattice $L.$
We can think of $E=\R^d$ and $L=\Z^d.$
Let $\mathcal C(L)$ be the space of functions on the lattice  $L$.
 The restriction to $L$ of a polynomial function on $E$ will be called a polynomial function on $L.$
Given an integer $q$,  a function on $L/qL$ will be called a periodic function on $L$ of period $q$. If $q=1$ we just have a constant function.
\begin{definition}A periodic polynomial function on $L$ is a function on
$L$ which  is a linear combination of products of
polynomial functions  with periodic functions.
\end{definition}

We also say that  a periodic polynomial function $p$ is a quasi polynomial function.

\noindent The space of periodic polynomial functions is graded:
if $p(\lambda)=\sum_{i,j} c_i(\lambda) p_j(\lambda)$ where the polynomials $p_j$ are homogeneous of degree $k$ and the functions
 $c_i$ periodic, we say that $p$ is homogeneous of degree $k$.
 As for polynomials, we say that $p$ is of degree $k$ if $p$ is a sum of homogeneous terms of degree less or equal to $k$, and the term of degree $k$ is non zero.
If all functions $c_i(\lambda)$ are of period $q$, we say that $p(\lambda)$ is of period $q$.
\begin{example}\end{example}
$$m(k)=\frac{1}{2}k^2+k+\frac{3}{4}+\frac{1}{4}(-1)^k$$
is a periodic polynomial function
of $k\in \Z$ and of degree $2$.

$\Box$

In other words, if  $p(\lambda)$ is of period $q$, for any $\lambda_0$, the function $\lambda \to p(\lambda_0+q\lambda)$ is a polynomial
function on $L$. So we can represent a periodic polynomial function as a family of polynomials indexed by $L/qL$.
If $q$ is very large, the above description is not efficient (the numbers of cosets being quite large). Nonetheless we will give an example of this description in Section \ref{HS}.

In practice, $p$ will be naturally obtained as a sum of quasi-polynomial functions $p_1,p_2,\ldots,p_u$ of
periods $q_1,q_2,\ldots, q_{u}$. So $p$ is of period $q$ where $q$ is the least common multiple of $q_1,q_2,\ldots,q_u$.
However, it is already more efficient to keep $p$ as represented as $\sum p_i$, the number of cosets needed to describe
each $p_i$ being $q_i$, and $\sum q_i$ is usually much smaller that $q$.
We thus will say that the set of periods of the quasi-polynomial function $p$  is the set  $\{q_1,q_2,\ldots,q_{u}\}$.

	An efficient way to represent  periodic functions   by step-polynomials is given in \cite{Verdo}, \cite{BBDKV}, as in the example we gave in the introduction.

\bigskip

In the case of one variable, we can give the following characterization of quasi-polynomial functions $p(k)$.
If the function $p(k)$ is quasi-polynomial,
its generating series $\sum_{k=0}^{\infty} p(k) t^k$ is the Taylor expansion for $|t|<1$ of a rational function
$R(t)=\frac{P(t)}{\prod_{i=1}^s(1-t^{a_i})}$, where the $a_i$ are integers, and $P(t)$ a polynomial in $t$
of degree strictly less than $\sum_i a_i$.
The correspondence is  as follows.
Consider   a quasi polynomial $p(k)$ of period $q$, equal to $0$ on all cosets except the coset $f+q\Z$, with $0\leq f< q$.
Write  the polynomial function   $j\to p(f+qj)$  of degree $R$ in terms of binomials:
$p(f+qj)=\sum_{n=0}^R a(n) \binom{j+n}{n}$.
Then $$\sum_{j=0}^{\infty} p(f+qj) t^{f+qj}=t^f\sum_{n=0}^Ra(n)\frac{1}{(1-t^q)^{n+1}}.$$

(Given a rational function $R(t)=\frac{1}{\prod_{i=1}^s(1-t^{a_i})}$,
an algorithm (in polynomial time if  the number  $s$  of factors is fixed)
to compute explicitly the corresponding quasipolynomial  function $p(k)$  such that $\sum_k p(k)t^k=R(t)$ is given in  LattE integrale \cite{allourfriends}.)

\bigskip

As in the case of polynomial functions, to test if two quasi polynomials functions are equal, it
is sufficient to test it on a sufficiently large subset.

\begin{lemma}\label{lem:equaquasipoly}
Let $p_1,p_2$ be two quasi polynomial functions.
If there exists a  open cone $\tau$, such that $p_1,p_2$ agree on  a translate $s+\tau$
of $\tau$, then $p_1=p_2$.
\end{lemma}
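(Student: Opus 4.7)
The plan is to reduce to the polynomial case and then exploit the fact that the lattice points in an open translated cone are Zariski dense. I would set $p := p_1 - p_2$, which is again quasi-polynomial, and prove the equivalent statement: a quasi-polynomial $p$ on $L$ that vanishes on $L \cap (s + \tau)$ is identically zero.

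\emph{Reduction to the polynomial case.} Let $q \geq 1$ be a common period of $p$, so that for every coset $f \in L/qL$ the function $P_f(\lambda) := p(f + q\lambda)$ is the restriction to $L$ of a polynomial on $E$. Since $\tau$ is a cone, $q^{-1}\tau = \tau$, hence $P_f$ vanishes on $L \cap (s_f + \tau)$ with $s_f := q^{-1}(s - f)$. It therefore suffices to show: any polynomial $P$ on $E$ whose restriction to $L$ vanishes on $L \cap (s_0 + \tau)$ must vanish identically.

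\emph{Polynomial case.} Here I would construct an explicit product of infinite sets of lattice points inside $L \cap (s_0 + \tau)$. Because $\tau$ is open and non-empty, it is defined by strict inequalities $\langle X_a,\xi\rangle > 0$, so $\tau$ is a convex subset of $E$ stable under addition, and $\tau$ spans $E$. Setting $d := \dim E$, I can pick $d$ linearly independent vectors in $\tau$ (by openness), approximate them by rational ones (dense in $\tau$ since $L \otimes \Q$ is dense in $E$), and scale by a common denominator to obtain linearly independent $e_1,\dots,e_d \in L \cap \tau$. The open set $s_0 + \tau$ is unbounded and an analogous scaling argument provides some $s_1 \in L \cap (s_0 + \tau)$; stability of $\tau$ under addition then yields
$$s_1 + (e_1 + \cdots + e_d) + \sum_{i=1}^d \Z_{\geq 0}\, e_i \;\subset\; L \cap (s_0 + \tau).$$
In the basis $(e_1,\dots,e_d)$ this set is a translate of $\Z_{\geq 0}^d$, so $P$ vanishes on a product of $d$ infinite sets. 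A standard induction on $d$, based on the fact that a one-variable polynomial with infinitely many roots is zero, then gives $P \equiv 0$; hence $p = 0$ and $p_1 = p_2$.

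The point where care is required is the production of the lattice-cone product inside $s_0 + \tau$: both the openness of $\tau$ (which gives stability under addition and a spanning family) and the fact that $L$ is a lattice of full rank (which provides the rational approximation) enter in an essential way. Without openness — say, if $\tau$ were a proper face of a polyhedral cone — any polynomial divisible by a defining linear form of that face would vanish on $L \cap (s + \tau)$ without being zero, so the hypothesis cannot be relaxed.
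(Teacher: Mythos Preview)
Your proof is correct. The paper does not actually prove this lemma: it is stated immediately after the sentence ``As in the case of polynomial functions, to test if two quasi polynomials functions are equal, it is sufficient to test it on a sufficiently large subset,'' and that remark is the only justification offered. Your argument supplies exactly the details the paper omits --- the reduction to a single polynomial on each coset of $qL$ via the period, followed by the production of a translated product lattice $\Z_{\geq 0}^d$ inside $L\cap(s_0+\tau)$ --- and is the standard way to flesh out this folklore fact. A small comment: the extra shift by $e_1+\cdots+e_d$ is not strictly necessary, since for an open cone defined by strict inequalities $\langle X_a,\xi\rangle>0$ one has $\tau+\overline{\tau}\subset\tau$, so already $s_1+\sum_i\Z_{\geq 0}\,e_i\subset s_0+\tau$; but including it does no harm and makes the containment transparent.
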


Recall that $$r=\dim_\C (\CH)- |\Delta_\k^+|-\dim C_K(\CH).$$

The following theorem  can be considered as a quantum analogue of
Duistermaat-Heckman theorem,(Theorem \ref{multDH}.)

\begin{theorem} \label{theo:multH}
Consider the decomposition of the cone
$C_K(\CH)=\cup_a \c_a$,  in the {\bf closed} cones of polynomiality $\c_a$,
then,      for each $a$, there exists a quasi polynomial function
$p_{K,a}^{\CH}$ of degree $r$  on  the lattice $\Lambda$ such that  if
$\lambda\in \c_a\cap \Lambda$

$$ m_K^{\CH}(\lambda)=p_{K,a}^{\CH}(\lambda).$$ \end{theorem}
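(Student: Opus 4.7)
The plan is to express $m_K^{\CH}(\lambda)$ as a Weyl-alternating sum of values of a vector partition function, and then invoke the classical piecewise quasi-polynomial behavior of such functions. First, let $\phi_1,\ldots,\phi_N\in\Lambda$, with $N=\dim_\C\CH$, be the weights of the action of $T$ on $\CH$, repeated with multiplicity. The $T$-character of $Sym(\CH)$ is $\prod_{j=1}^N(1-e^{\phi_j})^{-1}$, so extracting the coefficient of $e^\mu$ gives
$$m_T^{\CH}(\mu) \;=\; \#\bigl\{(n_1,\ldots,n_N)\in\N^N :\; \textstyle\sum_j n_j\phi_j = \mu\bigr\},$$
the vector partition function of the list $(\phi_j)$, supported on the cone $\R_{\geq 0}\{\phi_j\}$, which contains $C_K(\CH)$.

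By the classical piecewise quasi-polynomiality theorem for vector partition functions (Blakley; Dahmen--Micchelli; Sturmfels; Szenes--Vergne), there exists a finite decomposition of this cone into closed rational polyhedral chambers on each of which $m_T^{\CH}$ coincides with a single quasi-polynomial of degree $N-\dim(\mathrm{span}\{\phi_j\})$; moreover $m_T^{\CH}$ is continuous on its support. Now apply formula (\ref{eq:multKmultT}):
$$m_K^{\CH}(\lambda) \;=\; \sum_{w\in\CW_\k}\epsilon(w)\,m_T^{\CH}\bigl(\lambda+\rho_\k-w\rho_\k\bigr).$$
Each summand is piecewise quasi-polynomial in $\lambda$. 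Taking the common refinement of the $|\CW_\k|$ translated chamber decompositions, intersecting with $i\t^*_{\geq 0}$, and retaining the closed chambers of full dimension contained in $C_K(\CH)$ yields the asserted cone decomposition $C_K(\CH)=\cup_a \c_a$. By construction, on each $\c_a\cap\Lambda$ all $|\CW_\k|$ shifts lie in single chambers of $m_T^{\CH}$, so $m_K^{\CH}$ is given by one quasi-polynomial $p_{K,a}^{\CH}$, and continuity across the boundary of adjacent $\c_a$ is inherited from that of $m_T^{\CH}$.

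For the degree, combining (\ref{eq:multKmultT}) with the denominator formula (\ref{eq:denom}) shows that, as formal series on the dominant chamber,
$$\sum_\lambda m_K^{\CH}(\lambda)\,e^\lambda \;=\; \prod_{\alpha\in\Delta_\k^+}(1-e^{-\alpha})\cdot\prod_{j=1}^N\frac{1}{1-e^{\phi_j}},$$
which is the generating function of the vector partition function for the enlarged list $(\phi_j)\cup(-\alpha)_{\alpha\in\Delta_\k^+}$. Under the standing assumption that $C_K(\CH)$ meets the interior of $i\t^*_{\geq 0}$ and that $K\to U(\CH)$ has trivial kernel, this enlarged list spans a space of dimension $\dim C_K(\CH)+|\Delta_\k^+|$, so the generic quasi-polynomial degree is $(N+|\Delta_\k^+|)-(\dim C_K(\CH)+|\Delta_\k^+|)=N-\dim C_K(\CH)-|\Delta_\k^+|$, but the $|\Delta_\k^+|$ factors $(1-e^{-\alpha})$ lower this by a further $|\Delta_\k^+|$ by the discrete-derivative principle, yielding degree $r$. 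The main obstacle is verifying that the leading term is not an accidental zero on a full-dimensional $\c_a$; this is cleanest via Meinrenken--Sjamaar's $[Q,R]=0$ theorem, which identifies $p_{K,a}^{\CH}(\lambda)$ with the equivariant Riemann--Roch number of the reduced fiber $\CH_{red,\lambda}$ (of complex dimension $r$), whose leading asymptotic recovers the piecewise polynomial Duistermaat--Heckman density $dh_K^{\CH}$ of Theorem \ref{multDH}, which is strictly positive in the interior of $C_K(\CH)$.
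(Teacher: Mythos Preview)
Your argument has a genuine gap at the step ``taking the common refinement of the $|\CW_\k|$ translated chamber decompositions \ldots\ yields the asserted cone decomposition.'' The chambers of the vector partition function are cones with apex at the origin, but once you translate them by the nonzero vectors $\rho_\k-w\rho_\k$, the common refinement is a polyhedral complex whose cells are \emph{not} cones from the origin; near $0$ you get small bounded pieces. Thus your construction does not produce a cone decomposition, and the statement ``on each $\c_a\cap\Lambda$ all shifts $\lambda+\rho_\k-w\rho_\k$ lie in a single chamber of $m_T^{\CH}$'' is false for $\lambda$ near the walls of $\c_a$. What your argument really proves is that $m_K^{\CH}$ agrees with a quasi-polynomial on the subset of each $\c_a$ lying at distance greater than $\max_w\|\rho_\k-w\rho_\k\|$ from the boundary. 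Extending this to the \emph{closed} cone --- the emphasized word in the theorem --- is exactly the hard content that the paper does not attempt to prove elementarily: it cites the Meinrenken--Sjamaar $[Q,R]=0$ theorem as the source. Indeed, the paper's own proof of Theorem~\ref{theo:reductoT} follows precisely your outline on the region far from the walls, then explicitly \emph{invokes} Theorem~\ref{theo:multH} (Meinrenken--Sjamaar) plus Lemma~\ref{lem:equaquasipoly} to push to the closed cone. So your approach is not an alternative proof but rather the easy half of an argument whose completion requires the very theorem you are trying to prove.

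Separately, the degree argument is muddled: the ``enlarged list'' $(\phi_j)\cup(-\alpha)_{\alpha\in\Delta_\k^+}$ lives in $i\t^*$, which has dimension $\dim\t$, not $\dim C_K(\CH)+|\Delta_\k^+|$; and since the $-\alpha$ point opposite to the $\phi_j$, this list does not generate a pointed cone, so the partition-function degree formula does not apply directly. The correct way to see $\deg\le r$ is simply that multiplying the $T$-generating function by $\prod_{\alpha>0}(1-e^{-\alpha})$ corresponds to applying $|\Delta_\k^+|$ finite differences, each lowering the degree of a quasi-polynomial by one. For the lower bound $\deg\ge r$ you correctly note that Meinrenken--Sjamaar (via the positivity of $dh_K^{\CH}$ on the interior) is needed; but at that point you are already granting the full $[Q,R]=0$ machinery, which subsumes the entire statement.
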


This theorem  is  Meinrenken-Sjamaar theorem
for the particular case of the action of $K$ in the projective space associated to $\CH$.
In this work, the function $m_K^{\CH}(\lambda)$ is related to the Kawasaki-Riemann-Roch number \cite{Kawasaki81}  (suitably defined) of the reduced symplectic space
 $\CH_{red,\lambda}=\Phi_K^{-1}(K\lambda)/K$.
One may look  in  \cite{Par-Ve} for a different proof.

Assume the cone $C_K(\CH)$ is solid, and let $\CH^{fin}$ be the open subset of $\CH$  consisting of elements with finite stabilizers.
Let $\c_a$ be a cone of polynomiality, choose a regular value $\xi\in \c_a$ of the moment map,
  and let $s_a^i$ be the orders of the stabilizers  $K_v$, for
 $v\in \Phi_K^{-1}(\xi)$.
 Then the set of periods  of the quasi-polynomial function $p_{K,a}^{\CH}$ is  contained in the set $\{s_a^i\}$.
 In particular, if the action of $K$ on $\CH$ has trivial generic stabilizer, the reduced spaces are smooth for regular values, and
 all the functions $p_{K,a}^{\CH}$ are polynomials.

Finding the set of periods is already quite non trivial.
It is related to the computation of the Picard group of the reduced spaces \cite{Kumar-Prasad}.
In the examples we study here, we compute  a set containing the set of periods by brutal force, as we consider here relatively small values
of the rank of $K$ and the dimension of $\CH$.

So, for any  dominant weight  $\lambda$  belonging to
$C_K(\CH)$,
the function $k\to m^{\CH}(k\lambda)$ is of the form:
$m^{\CH}_K(k\lambda)=\sum_{i=0}^{N}c_i(k) k^i$
where $c_i(k)$ are periodic functions of $k$.
This formula is valid {\bf for any} $k\geq 0$ (so $c_0(0)=1$).
The highest degree term for which this function is non zero
will be called the degree of  the quasi polynomial function $m_K^{\CH}(k\lambda)$.
We discuss this degree in the next subsection.

If $\lambda$ is not in $C_K(\CH)$, the function $k\to m^{\CH}_K(k\lambda)$ is just equal to $0$, except for $k=0$,   and conversely, if this function is not zero, then
$\lambda\in C_K(\CH)$.

An interesting particular case is when the center of $\k$ is one dimensional,
 $\chi$ a weight of $T$  vanishing  on $[\k,\k]\cap \t$, and such that $\chi(J)=1$.
 So $\chi$ indexes a one dimensional representation of $K$, and
 $m_K^{\CH}(k\chi)=\dim( [Sym^k(\CH)]^{[K,K]})$.
The corresponding generating series
$\sum_{k=0}^{\infty}m_K^{\CH}(k\chi)t^k$
is the Hilbert series $HS(t)$
of the ring of invariant polynomials  on $\CH$ under $[K,K]$.
So, the degree of the function $k\to m_K^{\CH}(k\chi)$ is  the maximal number of algebraically independent invariants.
There is a non trivial invariant $P$ (different from a constant) if and only if  $0$ belongs to the Kirwan polytope of the projective space $P(\CH)$:
equivalently, if the line $\R_{\geq 0}\chi$ is an edge of the cone $C_K(\CH)$. This is one of the first instance of the $[Q,R]=0$ theorem, and
this basic case follows from Mumford description \cite{git} of the GIT quotient.
 The function  $k\to m_K^{\CH}(k\chi)$ is a quasi polynomial on the full positive line $k\geq 0$
(not only for $k$ sufficiently large).
It is in general difficult to decide if   $0$ belongs to the Kirwan polytope of the projective space $P(\CH)$, and even more so to determine   the degree of the
 quasi-polynomial function $ k\to m_K^{\CH}(k\chi)$.

\bigskip

The two following propositions
are particular cases of Meinrenken-Sjamaar result.
\begin{proposition}\label{polyhorn}
Let $(\lambda,\mu,\nu)$  be a triple of dominant weights for $U(d)$, belonging to the Horn cone.
The dilated Clebsh-Gordon   coefficient  $k\to c_{k\lambda,k\mu}^{k\nu}$ is a polynomial function of $k$ for $k\geq 0$.
\end{proposition}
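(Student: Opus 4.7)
The plan is to apply Theorem \ref{theo:multH} to the Hermitian representation of Example \ref{exa:Clebsch}, and then use the consequence of that theorem (stated right after its proof) that triviality of generic stabilizers upgrades the quasi-polynomial conclusion to a polynomial one.

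First, I would recall from Example \ref{exa:Clebsch} that $c_{\lambda,\mu}^\nu$ is precisely the multiplicity $m_K^\CH(\lambda,\mu,\nu^*)$ for the action of $K=U(d)\times U(d)\times U(d)$ on $\CH=\mathfrak{gl}(d)\oplus\mathfrak{gl}(d)$ given by $\Pi(g,h,k)(A,B)=(gAk^{-1},hBk^{-1})$. As discussed in the Horn problem example, the Kirwan cone $C_K(\CH)$ coincides with the Horn cone, so the hypothesis that $(\lambda,\mu,\nu)$ belongs to the Horn cone translates to $(\lambda,\mu,\nu^*)\in C_K(\CH)$. Theorem \ref{theo:multH} then supplies a decomposition $C_K(\CH)=\bigcup_a\c_a$ in closed polyhedral cones of polynomiality and quasi-polynomial functions $p_{K,a}^\CH$ on the corresponding lin spans such that $m_K^\CH(\xi)=p_{K,a}^\CH(\xi)$ for $\xi\in\c_a\cap\Lambda$. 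Because each $\c_a$ is a cone with apex at the origin, the ray $\{k(\lambda,\mu,\nu^*):k\geq 0\}$ stays in some fixed $\c_a$, and thus $k\mapsto c^{k\nu}_{k\lambda,k\mu}$ agrees with a single quasi-polynomial $p_{K,a}^\CH(k\lambda,k\mu,k\nu^*)$ for all $k\geq 0$ (the identity at $k=0$ being forced by $c^0_{0,0}=1$ and $0\in\c_a$).

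To upgrade from quasi-polynomial to polynomial, I appeal to the paragraph following Theorem \ref{theo:multH}: if the effective action of $K$ on $\CH$ has trivial generic stabilizer, then every $p_{K,a}^\CH$ is a genuine polynomial. Here the ineffective kernel is the diagonal circle $Z=\{(z\,\mathrm{Id},z\,\mathrm{Id},z\,\mathrm{Id})\}\simeq S^1$, and I have to verify that for generic $(A,B)\in\CH$ the stabilizer $K_{(A,B)}$ equals $Z$. A direct computation is available: the stabilizer condition reads $gA=Ak$, $hB=Bk$ with $(g,h,k)\in U(d)^3$, so for generic $A,B$ (invertible) one gets $g=AkA^{-1}$ and $h=BkB^{-1}$. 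Unitarity of $g$ and $h$ forces $k$ to lie in the common unitary centralizer of $A^*A$ and of $B^*B$; for generic $A,B$ these Hermitian matrices have simple spectra with no shared eigenvectors, so $k$ must be a scalar, and then $g=h=k\in Z$. Thus $K_{(A,B)}=Z$ generically, i.e.\ $K/Z$ acts with trivial generic stabilizer, and the conclusion follows.

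The main obstacle I foresee is the last step: one must be sure that \emph{regular values} of the moment map inside the chamber $\c_a$ meeting the ray admit preimages with trivial $K/Z$-stabilizer (not merely finite). The computation above shows it generically on $\CH$, but the moment-map regular locus inside $\c_a$ could in principle sit on a bad stratum. In the $U(d)$ Clebsch--Gordan setting this is known to be harmless, essentially because the GIT quotients of products of coadjoint orbits of $U(d)$ by the diagonal action are smooth (rather than merely orbifold) projective varieties for regular weights, so Hirzebruch--Riemann--Roch, instead of Kawasaki's orbifold version, governs the dilation and yields an honest polynomial in $k$.
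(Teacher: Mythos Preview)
Your proposal is correct and follows essentially the same approach as the paper: invoke Theorem \ref{theo:multH} for quasi-polynomiality, then verify that the generic stabilizer of $U(d)^3$ on $\mathfrak{gl}(d)\oplus\mathfrak{gl}(d)$ is the diagonal center, so that the paragraph following Theorem \ref{theo:multH} upgrades to polynomiality. The only difference is in the stabilizer computation: the paper picks a specific pair $(A,B)=(D_1^{1/2},D_2^{1/2})$ with $D_1$ diagonal in the standard basis and $D_2$ diagonalizable in a basis containing $\sum_i e_i$, and tracks $g$ and $h$ first (showing $g=h=k$ and then that $g$ lies in two transverse maximal tori), whereas you argue for generic invertible $(A,B)$ and track $k$ first, using that unitarity of $g=AkA^{-1}$ forces $k$ to centralize $A^*A$; both routes arrive at the same conclusion by the same mechanism, and your final worry about regular values is already absorbed by the paper's blanket statement that trivial generic stabilizer makes \emph{all} the $p_{K,a}^{\CH}$ polynomial.
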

(If $(\lambda,\mu,\nu)$ is not in  the Horn cone, this function is identically $0$, except if $k=0$).

Indeed, we have already seen the quasi-polynomial nature of $k\to c_{k\lambda,k\mu}^{k\nu}$ for $k\geq 0$.
Now it is easy to see that the generic stabilizer of the action of $U(d)\times U(d) \times U(d)$
on $\CH$ is the center of $U(d)$ (embedded diagonally).
 Here is an explicit proof. Let $D_1,D_2$ be two positive definite Hermitian matrices, with distinct eigenvalues.
 We assume $D_1$ diagonalizable in the basis $e_1,e_2,\ldots, e_d$,
 while $D_2$ diagonalizable in a basis containing $\sum_i e_i$.
 Consider $A=D_1^{1/2}$, $B=D_2^{1/2}$ in $\mathfrak{gl}(d)$ and $(g,h,k)\in U(d)\times U(d)\times U(d)$ such that
 $gAk^{-1} =A, hBk^{-1} =B$.
 We obtain $A A^*=D_1=gD_1g^{-1}$, and $BB^*=D_2=hD_2h^{-1}.$
 Thus $g$ commutes with $D_1$, so is diagonalizable in the standard basis. So $g$ commutes with
 $A$, and $gAk^{-1}=A$ implies $g=k$. Similarly $h=k$, and $h$ is diagonalizable in the basis diagonalizing $D_2$.
 So we have $g=h=k$. But $g$ being diagonalizable in the basis $e_i$, and in a basis containing $\sum e_i$, the equation
 $g(e_1+e_2+\cdots+e_d)=a(e_1+e_2+\cdots+e_d)$ implies that all eigenvalues of $g$ are equal.

A proof of Proposition \ref{polyhorn} by more combinatorial methods  is given in \cite{De}.

\bigskip

We now consider the action of $K=U(n_1)\times U(n_2)\times\cdots \times U(n_s)$ in $\CH=\C^{n_{1}} \otimes \cdots \otimes \C^{{n_s}}$.

\begin{proposition}\label{dilatedKro}
Let $\lambda_i\in P\Lambda_{U(n_i),\geq 0}$ be polynomial dominant weights for $U(n_i)$ .
 We assume that $(\lambda_1,\ldots,\lambda_s)$ belongs to the Kronecker cone $C_K(\CH)$.
 The dilated Kronecker  coefficient  $k\to g(k\lambda_1,\ldots,k\lambda_s)$ is a quasi-polynomial function of $k$ for $k\geq 0$.
\end{proposition}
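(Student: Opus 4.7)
The plan is to invoke Theorem \ref{theo:multH} applied to the geometric setup of Example \ref{exa:Kron}. There, $g(\lambda_1,\ldots,\lambda_s)$ is identified with the multiplicity $m_K^{\CH}(\lambda_1,\ldots,\lambda_s)$ of the irreducible $K$-representation of highest weight $\lambda=(\lambda_1,\ldots,\lambda_s)$ inside $\mathrm{Sym}(\CH)$, where $K=U(n_1)\times\cdots\times U(n_s)$ and $\CH=\C^{n_1}\otimes\cdots\otimes\C^{n_s}$. Thus the problem reduces to showing that $k\mapsto m_K^{\CH}(k\lambda)$ is quasi-polynomial for all $k\geq 0$.

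First I would verify that Theorem \ref{theo:multH} is applicable to this setting. The subgroup of homotheties $e^{i\theta}\mathrm{Id}_{\CH}$ is contained in $K$ (as the product of the scalars in each $U(n_i)$), so that $\mathrm{Sym}(\CH)$ decomposes with finite multiplicities. Under the standing assumptions of the Kronecker setting (reducing via the stabilization $n_1\leq n_2\cdots n_s$ and modding out the kernel of $K\to U(\CH)$), Proposition \ref{pro:solidVW} tells us that $C_K(\CH)$ is solid in the linear subspace $E$ cut out by the equal-trace equations, so that the hypotheses of Theorem \ref{theo:multH} are satisfied once one works inside $E$ (or equivalently, inside the quotient $K/Z'$ by the appropriate part of the center, which does not alter the multiplicity function restricted to dominant weights satisfying the trace constraints).

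Next, Theorem \ref{theo:multH} furnishes a decomposition of the Kirwan cone $C_K(\CH)=\bigcup_{a} \c_a$ into closed polyhedral chambers of polynomiality, and on each chamber a quasi-polynomial function $p^{\CH}_{K,a}$ of degree $r$ that agrees with $m_K^{\CH}$ on $\c_a\cap\Lambda$. Since $\lambda\in C_K(\CH)$ by hypothesis, there is at least one index $a$ with $\lambda\in\c_a$; as $\c_a$ is a closed cone, the entire ray $\{k\lambda : k\geq 0\}$ is contained in $\c_a$. Consequently
\[
g(k\lambda_1,\ldots,k\lambda_s)=m_K^{\CH}(k\lambda)=p^{\CH}_{K,a}(k\lambda)
\]
holds for every integer $k\geq 0$. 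The pullback of a quasi-polynomial (periodic polynomial) on $\Lambda$ by the homogeneous map $k\mapsto k\lambda$ is a quasi-polynomial in the single variable $k$: each polynomial factor becomes a polynomial in $k$, and each periodic factor becomes a periodic function of $k$ whose period divides the original period. This yields the desired quasi-polynomial formula, valid for all $k\geq 0$ (in particular $p^{\CH}_{K,a}(0)=g(0,\ldots,0)=1$).

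The genuinely nontrivial content is already packaged in Theorem \ref{theo:multH} (the Meinrenken--Sjamaar $[Q,R]=0$ theorem applied to the projectivization of $\CH$, which expresses $m_K^{\CH}(\lambda)$ as a Kawasaki--Riemann--Roch number on the reduced space $\CH_{\mathrm{red},\lambda}$). The main obstacle is therefore not a new argument but the verification that the abstract hypotheses apply to the Kronecker situation (solidity in $E$, freeness of the residual action after factoring out the center). Once that is in place, the proposition follows formally from the closedness and homogeneity of the chambers $\c_a$.
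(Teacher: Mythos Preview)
Your argument is correct and follows the same route as the paper: the paper does not give a separate proof of Proposition~\ref{dilatedKro} but simply declares it (together with Proposition~\ref{polyhorn}) to be a particular case of the Meinrenken--Sjamaar $[Q,R]=0$ theorem, i.e.\ of Theorem~\ref{theo:multH}. Your write-up makes explicit the verification that the hypotheses of Theorem~\ref{theo:multH} are met in the Kronecker setup (via Proposition~\ref{pro:solidVW} and passage to $K/\ker$), and then observes that the closed cones $\c_a$ are stable under dilation so that the ray $\{k\lambda\}$ stays in a single chamber---exactly the mechanism the paper has in mind.
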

This result is  asserted in \cite{Mulmuley}.

(Remark that if  $(\lambda_1,\ldots,\lambda_s)$ is not in  $C_K(\CH)$, the dilated Kronecker coefficient is identically $0$, except if $k=0$).

\bigskip

 In the Kronecker case, we do not know the set of periods  of the dilated Kronecker coefficient.
 \begin{example} \end{example}
 For the case $n_1=n_2=n_3=3$, we find quasi polynomials with set of periods $\{1,2,3,4\}$ (see \ref{application}),
leading to polynomial behavior on cosets $f+12\Z$.

For the $4$ qubits  case $n_1=n_2=n_3=n_4=2$, we find quasi polynomials with set of periods $\{1,2,3,4\}$ (see \ref{application}),
leading to polynomial behavior on cosets $f+6\Z$.

For the $5$ qubits  case $n_1=n_2=n_3=n_4=n_5=2$, we find quasi polynomials with set of periods $\{1,2,3,4,5\}$ (see \ref{application}),
leading to polynomial behavior on cosets $f+60\Z$.

$\Box$

It would be  desirable  to describe
$m_K^{\CH}(\mu)$ as the number of integral points in a polytope.
This result would imply directly the quasi polynomial behavior of $m_K^{\CH}(k\mu).$
For Clebsh-Gordan coefficients,  this is the hive polytope of
Knutson-Tao \cite{Knu-Tao01}.
The corresponding computation is implemented in \cite{DLMA}. For Kronecker coefficients, no general result is known.

\bigskip

Let us end this section by recalling the behavior of multiplicities on the faces of $C_K(\CH)$.

Consider a face $F$  of the  cone $C_K(\CH)$. If $F$ contains elements of
$C_K(\CH)\cap i\t^*_{>0}$, we then say that $F$ is a regular face.
If $F$ is a regular face, let $T_F$ be the torus with Lie algebra $\lin(F)^{perp}$.
 Let $K_F$  be the centralizer of $T_F$, with Lie algebra $\k_F$, and system of positive roots
  $\Delta_{\k_F}^+$.
  Let $\CH(T_F)$ be the subspace of $\CH$ stable by $T_F$.
 This is a Hamiltonian space for $K_F$.

 The following reduction formula holds.

\begin{theorem}\label{th:restriction}
Let $F$ be a regular face of $C_K(\CH)$.
Then for any $\lambda\in F$, we have
$$m_K^{\CH}(\lambda)=m_{K_F}^{\CH(T_F)}(\lambda).$$

\end{theorem}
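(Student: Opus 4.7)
The strategy is to convert both multiplicities into Kawasaki--Riemann--Roch numbers of symplectic reductions via Theorem \ref{theo:multH}, and then identify those reductions explicitly using the linear structure of $\CH$.

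By Theorem \ref{theo:multH}, $m_K^{\CH}(\lambda)$ is the Kawasaki--Riemann--Roch number of $\CH_{red,\lambda}=\Phi_K^{-1}(K\lambda)/K$, and likewise $m_{K_F}^{\CH(T_F)}(\lambda)$ is the one of $\CH(T_F)_{red,\lambda}=\Phi_{K_F}^{-1}(K_F\lambda)/K_F$, each prequantized by the ambient linear data. It therefore suffices to produce a natural isomorphism of these two prequantized orbifolds. Take $\lambda$ in the relative interior of $F$; by regularity of the face we have $\lambda\in i\t^*_{>0}$, so $K_\lambda=(K_F)_\lambda=T$ and both reductions collapse to $\Phi^{-1}(\lambda)/T$ (with $\Phi_K$ on the left, $\Phi_{K_F}$ on the right). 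Thus it is enough to prove $\Phi_K^{-1}(\lambda)=\Phi_{K_F}^{-1}(\lambda)$ inside $\CH(T_F)$. The inclusion $\Phi_{K_F}^{-1}(\lambda)\subset\Phi_K^{-1}(\lambda)$ is clear, since the $K_F$-moment map on $\CH(T_F)$ equals the restriction of $\Phi_K$ and $\lambda\in i\t^*\subset i\k_F^*$.

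The geometric heart is the reverse inclusion $\Phi_K^{-1}(\lambda)\subset\CH(T_F)$. For $v\in\Phi_K^{-1}(\lambda)$, restricting the moment equation to $\mathrm{Lie}(T_F)=\lin(F)^\perp$ gives $\langle Xv,v\rangle=\langle\lambda,X\rangle=0$ for every $X\in\mathrm{Lie}(T_F)$, because $\lambda\in\lin(F)$. Decomposing $v=\sum_\chi v_\chi$ into $T_F$-weight components yields
\[
\sum_\chi \chi\,\|v_\chi\|^2 \;=\; 0\quad\text{in }\mathrm{Lie}(T_F)^*.
\]
The hypothesis that $F$ is a face of the Kirwan cone (and not merely an arbitrary subspace of $i\t^*$) forces the non-zero $T_F$-weights occurring in $\CH$ to lie strictly on one side of a hyperplane of $\mathrm{Lie}(T_F)^*$ --- this is the Mumford--Ness / Kempf--Ness content of the inequalities cutting out $C_K(\CH)$. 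Combined with the positivity of the coefficients $\|v_\chi\|^2$, the relation above forces $v_\chi=0$ for every $\chi\neq 0$, hence $v\in\CH(T_F)$. Applying $[Q,R]=0$ on both sides then gives the desired equality at such $\lambda$; the case of $\lambda$ on lower-dimensional subfaces of $F$ follows either by redoing the same argument with enlarged stabilizers $K_\lambda\supset T$ and $(K_F)_\lambda\supset T$, or by continuity along $F$ of the quasi-polynomial representatives provided by Theorem \ref{theo:multH}.

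The principal obstacle is the pointedness claim for the non-zero $T_F$-weights of $\CH$; this is precisely where the face-regularity of $F$ enters, and without it the relation $\sum_\chi \chi\|v_\chi\|^2=0$ can admit genuine cancellations. A conceptually cleaner route is to invoke the Guillemin--Sternberg symplectic cross-section theorem directly, which packages the pointedness into the normal form $\Phi_K^{-1}(K\lambda)\cong K\times_{K_F}\Phi_{K_F}^{-1}(\lambda)$ valid around any point of a regular face of the Kirwan cone; either route rests on the same face-regularity input.
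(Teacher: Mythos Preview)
The paper does not give its own proof of this statement; it simply records it as a corollary of the Meinrenken--Sjamaar $[Q,R]=0$ theorem (with a pointer to \cite{Par-Ve}). So your attempt is not competing with a written argument, but it still has to be measured against what that machinery actually delivers.

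Your reduction to an identification of reduced spaces is the right instinct, but the step you flag as the ``geometric heart'' does not go through as stated. The pointedness claim for the nonzero $T_F$-weights of $\CH$ is false in general, and face-regularity is not what would rescue it. Regularity of $F$ means only that $F$ meets $i\t^*_{>0}$, i.e.\ that $K_F$ is a genuine Levi determined by $\t_F$; it says nothing about how the $T$-weights of $\CH$ restrict to $\t_F$. Concretely, take the $3$-qubit example of Section~\ref{exa3qubits} and the regular Higuchi--Sudbery--Szulc facet $F=\{\alpha_2=\beta_2+\gamma_2\}$. A generator $X$ of $\t_F$ pairs with the eight weights $\psi_{ijk}$ to give the values $\{-1,0,0,0,1,1,1,2\}$; the value $-1$ (on $e_2\otimes e_1\otimes e_1$) destroys the half-space property, so your relation $\sum_\chi \chi\,\|v_\chi\|^2=0$ admits cancellations in principle. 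This facet is \emph{not} a face of $C_T(\CH)$ (compare Figure~\ref{CKT}), which is exactly why the abelian argument you are running cannot see it.

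What is actually needed, and what Meinrenken--Sjamaar (or the GIT/Kempf--Ness picture in \cite{Par-Ve}, \cite{Ressayre-reduction}) supplies, uses the full $K$-moment map, not just its restriction to $\t_F$: the constraints $\langle Yv,v\rangle=0$ for $Y$ in the root spaces of $\k$ are essential to pin down the fiber over a face of $C_K(\CH)$ that is invisible to $C_T(\CH)$. Your fallback to the Guillemin--Sternberg cross-section is also misapplied: that theorem slices along open faces of the \emph{Weyl chamber}, not along faces of the Kirwan cone, so it does not produce the normal form $\Phi_K^{-1}(K\lambda)\cong K\times_{K_F}\Phi_{K_F}^{-1}(\lambda)$ you assert. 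In short, the architecture of your proof is reasonable, but the crucial inclusion $\Phi_K^{-1}(\lambda)\subset\CH(T_F)$ is left unproved, and both justifications you offer for it are incorrect.
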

Again, this theorem is a  corollary of the $[Q,R]=0$ general theorem of Meinrenken-Sjamaar,
(see also  \cite{Par-Ve}).

Return to the example of the Horn cone. Let us consider a Horn triple $(I,J,K)$ such that the equation
(\ref{eqHorn}) determines a facet $F_{I,J,K}$ of the Horn cone. Let $I^c,J^c,K^c$ the complement of
$I,J,K$ in $[1,\ldots, d]$.
Then the centralizer of the element $(E_I,E_J,E_K)$ is
$\left (U(I)\times U(I^c)\right )\times \left (U(J)\times U(J^c)\right ) \times \left (U(K)\times U(K^c)\right )$.
Here we denoted by $U(I)$, the unitary group acting on $\C_I =\oplus_{i\in I} \C e_i$.

So when $\lambda, \mu,\nu$ belong to the facet $F_{I,J,K}$, the corresponding Clebsh-Gordan coefficient
is the product of the Clebsch-Gordan $c_{\lambda_I,\mu_J}^{\nu_K}$
coefficient relative to the group $U(r)$ with the
Clebsch-Gordan $c_{\lambda_{I^c},\mu_{J^c}}^{\nu_{K^c}}$
coefficient relative to the group $U(d-r)$.
So Theorem \ref{th:restriction}  implies the factorization theorem
of \cite{King-Tollu-Toumazet}, proved by combinatorial means.

\subsection{Comments on degrees }
We continue to assume that the moment cone $C_K(\CH)$ intersects the interior of the Weyl chamber and
 that the kernel of the homomorphism $K\to U(\CH)$ is trivial.

Recall the definition of
$$r=\dim_\C (\CH)- |\Delta_\k^+|-\dim C_K(\CH).$$

Consider $\c_a$ a cone for polynomiality and the two associated functions $d_{K,a}^{\CH}, p_{K,a}^{\CH}$.
The function $d_{K,a}^{\CH}$ is a polynomial function on $i\t^*$, while $p_{K,a}^{\CH}$ is a quasi polynomial function on $\Lambda$.
Then the two functions $d_{K,a}^{\CH}$ and $p_{K,a}^{\CH}$ are both of degree $r$.
In the case where $K$ is abelian,  (partition functions), the   term  of $p_{K,a}^{\CH}$  of highest degree $r$ is polynomial.
In general, this is not usually true.  The   term  of $p_{K,a}^{\CH}$  of highest degree $r$ might  be a quasi polynomial and not a polynomial.
For example, $r$ might be equal to $0$, however the functions
$p_{K,a}^{\CH}$ might be  periodic function of period $q>1$.

The following example was communicated to us by M.Walter.
\begin{example}\end{example}\label{ex:Walter}
Let us consider $U(2)$  and its torus $T$.
Let $i\t^*$ with basis $\epsilon_1=[1,0],\epsilon_2=[0,1]$
and the lattice of weights $\Lambda=\{\lambda=[\lambda_1, \lambda_2], \lambda_1,\lambda_2\in \Z\}$.

Consider $K=U(2)/\{\pm 1\}$ and its torus $T_K$.
Then the lattice of weights is $\Lambda_K=\{\lambda=[\lambda_1,\lambda_2]; \lambda_1,\lambda_2\in \Z; \lambda_1+\lambda_2 \ {\rm even}\}$.

 Consider the $3$-dimensional irreducible representation of  $K$ on $\CH=\C^3$ (with trivial kernel), with weights $[2,0],[1,1],[0,2]$.

 The Kirwan cone $C_T(\CH)$ is $\R_{\geq 0}\epsilon_1\oplus \R_{\geq 0}\epsilon_2$.
 Let $\c_1$ be the closed cone generated by $\epsilon_1,\epsilon_1+\epsilon_2$, and $\c_2$ generated by
 $\epsilon_1+\epsilon_2,\epsilon_2$.

 The Duistermaat-Heckman function for $T_K$ is given
by
$$dh^{\CH}_{T_K}(\xi_1\epsilon_1+\xi_2\epsilon_2)=\frac{1}{2}\xi_2$$
 on $\c_1,$
 $$dh^{\CH}_{T_K}(\xi_1 \epsilon_1+\xi_2\epsilon_2)=\frac{1}{2}\xi_1$$
 on $\c_2.$

 The multiplicity function for $T_K$ on $\Lambda_K$ is (with $\lambda_1+\lambda_2\in 2\Z$) is

$$m_{T_K}^{\CH}(\lambda_1 \epsilon_1+\lambda_2\epsilon_2)=\frac{1}{2}\lambda_2+\frac{3}{4}+(-1)^{\lambda_2}\frac{1}{4} $$
 on $\c_1,$

  $$m_{T_K}^{\CH}(\lambda_1 \epsilon_1+\lambda_2\epsilon_2)=\frac{1}{2}\lambda_1+\frac{3}{4}+(-1)^{\lambda_1} \frac{1}{4}$$
 on $\c_2.$

The (closed) Weyl chamber is $\c_1$ and we have $C_K(\CH)=\c_1$.

The Duistermaat-Heckman function is
$$dh^{\CH}_{K}(\xi_1\epsilon_1+\xi_2\epsilon_2)=1.$$

The multiplicity function is
$$m_{K}^{\CH}(\lambda_1 \epsilon_1+\lambda_2\epsilon_2)=\frac{1}{2}(1+(-1)^{\lambda_1}).$$
In particular,  the highest degree term (degree $0$) of the multiplicity function is not a polynomial.
Only when $\lambda_1$ is even, we obtain that  the highest degree term of $m_K^{\CH}$ (which is the constant function $1$)
coincide with the Duistermaat-Heckman polynomial. $\Box$

\bigskip

The degree $r$ is equal to $0$ if and only if multiplicities are bounded.
This implies that all the fibers $\Phi^{-1}(\xi)$ of the moment map are homogeneous spaces (the reduced fiber is a point).
In particular, if  the $K$ multiplicities of the representation $Sym(\CH)$ are bounded,  they take only the values $0$ or $1$.

We now consider a  face $F$ of $C_K(\CH)$.
The restriction to $F$ of the function $m_K^{\CH}$ is again a piecewise quasi-polynomial function.
 On $F\cap \c_a$, the multiplicity is given by the restriction of
 $p_{K,a}^{\CH}$ to $F\cap \c_a$.
 The degree of $p_{K,a}^{\CH}|_{lin(F)}$ (considered as a quasi-polynomial function $\lambda$ on the lattice $lin(F)\cap \Lambda$) might drop.
 If $F$ is a regular face, we can compute the degree using Theorem \ref{th:restriction}. Indeed
 \begin{lemma}
 If $F$ is a regular face,  then the degree of the function $m_K^{\CH}$ restricted to $F$ is the same on each cone
 $\c_a\cap F$ and is equal to
 $r_F=\dim_\C \CH(T_F)-\dim(F)-|\Delta^+_{\k_F}|$.
  \end{lemma}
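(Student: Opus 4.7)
The plan is to use Theorem \ref{th:restriction} to transport the problem from $(K,\CH)$ to the smaller Hamiltonian system $(K_F,\CH(T_F))$, and then apply Theorem \ref{theo:multH} in this reduced setting. The reduction formula gives $m_K^{\CH}(\lambda)=m_{K_F}^{\CH(T_F)}(\lambda)$ for every $\lambda\in F$, so it suffices to determine the degree of the multiplicity function of $K_F$ on $\CH(T_F)$ along $F$.

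The key geometric step is the identification $\dim C_{K_F}(\CH(T_F))=\dim F$. One inclusion is immediate from the definitions: since $T_F$ acts trivially on $\CH(T_F)$, every $v\in\CH(T_F)$ satisfies $\langle Xv,v\rangle=0$ for $X\in\t_F$, so $\Phi_{K_F}(v)$ annihilates $\t_F$; as $\lin(F)=\t_F^{\perp}\cap i\t^*$ by the definition of $T_F$, this yields $C_{K_F}(\CH(T_F))\subset\lin(F)$. For the reverse inclusion pick $\lambda\in F\cap\Lambda_{\geq 0}$. Since $\lambda\in C_K(\CH)$, the Mumford-type proposition recalled earlier produces a positive integer $k$ with $m_K^{\CH}(k\lambda)\neq 0$, which by Theorem \ref{th:restriction} forces $m_{K_F}^{\CH(T_F)}(k\lambda)\neq 0$, so that $k\lambda$ (and hence $\lambda$) lies in $C_{K_F}(\CH(T_F))$. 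Density of lattice points in $F$ together with closedness of the cone then give $F\subset C_{K_F}(\CH(T_F))$. Regularity of $F$ is used to guarantee that $F$ meets the interior of the Weyl chamber, which after inclusion into the larger $K_F$-Weyl chamber is the standing hypothesis required to apply Theorem \ref{theo:multH} to $(K_F,\CH(T_F))$.

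Theorem \ref{theo:multH} applied to $(K_F,\CH(T_F))$ now gives that on each cone of polynomiality $\c'_b\subset C_{K_F}(\CH(T_F))$ the multiplicity is a quasi-polynomial $p^{\CH(T_F)}_{K_F,b}$ of degree
\[
\dim_{\C}\CH(T_F)-|\Delta^{+}_{\k_F}|-\dim C_{K_F}(\CH(T_F))=r_F.
\]
Every chamber $\c_a\cap F$ of full dimension $\dim F$ is contained in some $\c'_b$, so on $\c_a\cap F$ the function $m_K^{\CH}$ agrees with the restriction of $p^{\CH(T_F)}_{K_F,b}$; because $\c_a\cap F$ is an open full-dimensional subset of $\lin(F)$ this restriction has the same degree (Lemma \ref{lem:equaquasipoly}), giving the common value $r_F$ on every such chamber. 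The only nontrivial point in the argument is the dimensional identification in the second paragraph; everything else is an application of results already stated in the excerpt.
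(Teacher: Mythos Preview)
Your proof is correct and follows exactly the approach the paper indicates: the paper does not give an explicit proof of this lemma but simply prefaces it with ``we can compute the degree using Theorem~\ref{th:restriction}. Indeed'', leaving the reader to combine the reduction formula $m_K^{\CH}(\lambda)=m_{K_F}^{\CH(T_F)}(\lambda)$ with the general degree formula $r=\dim_\C\CH-|\Delta_\k^+|-\dim C_K(\CH)$ applied to $(K_F,\CH(T_F))$. You have spelled out precisely this, including the key identification $\dim C_{K_F}(\CH(T_F))=\dim F$ that the paper takes for granted.
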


 If $F$ is not a regular face, it is not eas§y to describe the degree of the restriction by such a simple formula.
In particular if $0$ is in the Kirwan polytope of $P(\CH)$,
the degree on the corresponding edge of the Kirwan cone is difficult to compute.

If $F$ is a regular face,  the degree of the multiplicity function restricted to any  $\c_a\cap F$ depends only of the linear span of $F$.
Let $w$ be an element of the Weyl group.
Although the Kirwan cone is not stable by the action of $\CW_\k$, we might have two faces $F_1$ and $F_2$,
so that their linear span is conjugated by $w$ (but of course not the intersection of this linear span with $C_K(\CH)$).
We then obtain  that the degree  of the multiplicity function is the same on $F_1,F_2$, see Example \ref{exa:632}.

\bigskip

Consider now  a line $\{k\lambda, k=0,1,2,\ldots\}$ contained in $C_K(\CH)$.
It is clear that the  degree of the quasi-polynomial function  $k\to m_K^{\CH}(k\lambda)$
is less or equal to $r$.
The reduced space $\Phi^{-1}(K\lambda)/K$, which may be singular,
is provided with a structure of projective variety. Thus we can compute in principle its dimension $r_\lambda$.
 More precisely, let us consider the open set $P^0$  of orbits of maximal dimension in
$P=\Phi^{-1}(K\lambda)$.
Then $P^0/K$ is an orbifold, and then $r_\lambda=\dim P^{0}/K$.
The following result is also a consequence of Meinrenken-Sjamaar.

\begin{theorem} \label{theo:beforemax}
The degree of the quasi-polynomial function  $k\to m_K^{\CH}(k\lambda)$
is equal to $r_\lambda$.
\end{theorem}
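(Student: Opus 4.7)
The plan is to deduce the theorem from the Meinrenken--Sjamaar $[Q,R]=0$ theorem (the same input behind Theorem \ref{theo:multH}) together with the Kawasaki--Riemann--Roch formula, extracting the leading asymptotic behavior as $k\to\infty$.

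First, I would set up the geometry along the ray $k\lambda$. By $[Q,R]=0$ applied to the linear action of $K$ on $\CH$ (viewed via its projectivization), the multiplicity equals a (Kawasaki) Riemann--Roch number on the symplectic reduction,
\[
m_K^{\CH}(k\lambda)=\chi\bigl(M_{k\lambda},L_{k\lambda}\bigr),
\]
where $M_{k\lambda}=\Phi_K^{-1}(K\cdot k\lambda)/K$ and $L_{k\lambda}$ is the quantizing (orbifold) line bundle whose curvature is the reduced symplectic form. Although $M_\lambda$ may be singular, its GIT realization endows it with the structure of a projective variety, and by hypothesis the open dense substratum $M_\lambda^0 = P^0/K$ is a symplectic orbifold of complex dimension exactly $r_\lambda$.

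Second, I would use the radial invariance $\xi\mapsto k\xi$ to identify $M_{k\lambda}$ with $M_\lambda$ as complex orbifolds, sending $L_{k\lambda}$ to $L_\lambda^{\otimes k}$. Kawasaki--Riemann--Roch then expresses $\chi(M_\lambda,L_\lambda^{\otimes k})$ as a sum of a polynomial contribution from the principal stratum and correction terms from twisted sectors supported on lower-dimensional strata. The principal contribution has leading asymptotics
\[
\frac{k^{r_\lambda}}{r_\lambda!}\int_{M_\lambda^0}\omega_\lambda^{r_\lambda}+O(k^{r_\lambda-1}),
\]
with $\omega_\lambda$ the reduced symplectic form. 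Since $M_\lambda^0$ is a non-empty open subset of a symplectic orbifold of dimension $r_\lambda$, this integral is strictly positive; the orbifold corrections from strata of dimension strictly less than $r_\lambda$ contribute only lower-order terms in $k$, so they cannot cancel the leading coefficient. Combining with Theorem \ref{theo:multH} (which already guarantees quasi-polynomiality for all $k\ge 0$), the coefficient of $k^{r_\lambda}$ is non-zero and the degree is exactly $r_\lambda$.

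The main technical obstacle is the careful handling of the twisted-sector corrections when $\lambda$ is a singular value of the moment map, to be sure that they are genuinely of lower degree in $k$; this is exactly where one must exploit that the Kawasaki contributions come from fixed-point sets of finite isotropy elements inside $P^0/K$ (hence real codimension at least two, shrinking the complex dimension) and from the deeper strata $P\setminus P^0$ (of complex dimension strictly below $r_\lambda$ by definition). Once these degree estimates are in hand, the positivity of the symplectic volume on $M_\lambda^0$ yields the conclusion. As a consistency check, the inequality $r_\lambda\le r$ noted before the theorem matches the a priori bound on the degree of $m_K^{\CH}$, with equality precisely when $K\lambda$ meets a chamber of maximal dimension $r$, recovering the global regularity picture of Theorem \ref{theo:multH}.
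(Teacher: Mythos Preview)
The paper does not actually prove this theorem: immediately before the statement it says only that ``the following result is also a consequence of Meinrenken--Sjamaar,'' and no argument is given. Your sketch---identifying $m_K^{\CH}(k\lambda)$ with a Kawasaki--Riemann--Roch number on the reduced space and reading off the degree from the leading symplectic-volume term---is precisely the derivation the paper points to (compare the remark after Theorem~\ref{theo:multH} that $m_K^{\CH}(\lambda)$ is ``related to the Kawasaki--Riemann--Roch number \ldots\ of the reduced symplectic space''), so you are following the intended route.

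One point deserves tightening. Your identification $M_{k\lambda}\cong M_\lambda$ with $L_{k\lambda}\cong L_\lambda^{\otimes k}$ is correct thanks to the conic structure of $\CH$ (dilation by $\sqrt{k}$), but when $\lambda$ is a singular value the reduced space $M_\lambda$ is not globally an orbifold, and Kawasaki's formula does not apply to it directly. Meinrenken--Sjamaar's machinery handles this by passing to a nearby regular value within the same symplectic stratum (or via partial desingularization), and the key fact one must extract from \cite{M-S} is that the orbifold whose Riemann--Roch number actually computes $m_K^{\CH}(k\lambda)$ has complex dimension exactly $r_\lambda=\dim P^0/K$. You correctly flag this as ``the main technical obstacle,'' but your argument that twisted sectors and deeper strata contribute only in lower degree presupposes that the Kawasaki integral is taken over an honest orbifold of dimension $r_\lambda$; making that precise is where the real content of the cited theorem lies, and your sketch would need to invoke their stratified reduction explicitly to close the gap.
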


However, it is usually difficult to compute explicitly $r_\lambda$ using this geometric theorem.
We can do it when $\lambda$ is  in the interior of the Weyl chamber.
\begin{lemma}\label{max}
Let $\lambda$ be in the interior of the Weyl chamber.
Consider the minimal face $F$ of $C_K(\CH)$ containing $\lambda$.
Then the  degree of the quasi-polynomial function  $k\to m_K^{\CH}(k\lambda)$
is  equal to $r_F=\dim_\C \CH(T_F)-\dim(F)-|\Delta^+_{\k_F}|$.
\end{lemma}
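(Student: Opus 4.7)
Since $\lambda$ lies in the open interior of the Weyl chamber, the minimal face $F$ of $C_K(\CH)$ containing $\lambda$ is regular in the sense of Theorem \ref{th:restriction}, and by minimality $\lambda$ lies in the relative interior of $F$. Theorem \ref{th:restriction} then gives $m_K^{\CH}(k\lambda)=m_{K_F}^{\CH(T_F)}(k\lambda)$ for every $k\ge 0$, so it suffices to compute the degree of the quasi-polynomial $k\mapsto m_{K_F}^{\CH(T_F)}(k\lambda)$.

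Next, I would pin down the Kirwan cone $C_{K_F}(\CH(T_F))$. Because $T_F$ acts trivially on $\CH(T_F)$, the restricted moment map annihilates $\t_F=\lin(F)^{\perp}$, so $C_{K_F}(\CH(T_F))\subseteq \lin(F)$. Conversely, for any $\mu$ in the relative interior of $F\cap\Lambda_{\ge 0}$, the Mumford-type result quoted above provides $k>0$ with $m_K^{\CH}(k\mu)\neq 0$; the reduction formula then forces $m_{K_F}^{\CH(T_F)}(k\mu)\neq 0$, hence $\mu\in C_{K_F}(\CH(T_F))$. Since such $\mu$'s are dense in $F$, we conclude $F\subseteq C_{K_F}(\CH(T_F))$, and combining both inclusions yields $\dim C_{K_F}(\CH(T_F))=\dim F$ together with the fact that $\lambda$ lies in the relative interior of $C_{K_F}(\CH(T_F))$.

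I then apply Theorem \ref{theo:multH} to the system $(K_F,\CH(T_F))$: on the chamber of polynomiality containing $\lambda$, the function $m_{K_F}^{\CH(T_F)}$ is a quasi-polynomial of degree
$$\dim_\C \CH(T_F)-|\Delta_{\k_F}^{+}|-\dim C_{K_F}(\CH(T_F))=\dim_\C \CH(T_F)-|\Delta_{\k_F}^{+}|-\dim F=r_F.$$
To conclude that the univariate restriction $k\mapsto m_{K_F}^{\CH(T_F)}(k\lambda)$ attains this full degree, I would invoke Theorem \ref{theo:beforemax} applied to $(K_F,\CH(T_F))$: the point $\lambda$ is in the interior of the Weyl chamber of $K_F$ and in the relative interior of $C_{K_F}(\CH(T_F))$, hence its reduced space is generic and has maximal complex dimension $r_F$.

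The main obstacle is establishing the dimension equality $\dim C_{K_F}(\CH(T_F))=\dim F$, and more precisely the containment $F\subseteq C_{K_F}(\CH(T_F))$. This is achieved by combining Mumford's theorem (to dilate lattice points of $F$ into non-vanishing multiplicities) with the reduction formula; one must be careful to choose $\mu$ in the relative interior of $F$ so that Theorem \ref{th:restriction} genuinely applies. Once this identification is made, the degree statement is a direct consequence of Theorems \ref{theo:multH} and \ref{theo:beforemax} applied to the smaller Hamiltonian system $(K_F,\CH(T_F))$.
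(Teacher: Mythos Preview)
Your proof is correct and follows essentially the same route the paper implicitly takes: the paper does not give a detailed proof of this lemma but presents it as the case of the preceding unnamed lemma on regular faces (which is itself deduced from Theorem \ref{th:restriction}) combined with Theorem \ref{theo:beforemax}. Your argument spells out precisely these steps, including the verification that $\dim C_{K_F}(\CH(T_F))=\dim F$ and that $\lambda$ sits in the relative interior, which the paper leaves to the reader.
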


Thus the behavior of the function $m_K^{\CH}$ is strikingly different from the behavior of the Duistermaat-Heckman measure which typically vanishes identically on the faces. So for a $\lambda$ which is not in the interior of the cone $C_K(\CH)$, the degree of the function
$dh^{\CH}_K(t\lambda)$ and of the function $m_K^{\CH}(k\lambda)$ are usually different.

A point $\lambda$ is called (weakly) stable, if the degree of $k\to m_K^{\CH}(k\lambda)$ is $0$. In other words,
the function $m_K^{\CH}(k\lambda)$ is bounded on the ray with generator $\lambda$.
 In this case the function $m_K^{\CH}(k\lambda)$ takes only values $0$ or $1$.
So this degree can be equal to $0$ only on the boundary of $\CH$, except in the case where multiplicities are bounded.
Meinrenken-Sjamaar result (or in fact, here GIT theory)  implies the following lemma.
\begin{lemma}
A point $\lambda\in C_K(\CH)$  is (weakly) stable
 if and only if $\Phi^{-1}(K\lambda)$ is  a $K$-orbit.
  \end{lemma}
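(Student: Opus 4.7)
My plan is to derive both directions directly from Theorem \ref{theo:beforemax}, which identifies the degree of $k\mapsto m_K^{\CH}(k\lambda)$ with $r_\lambda=\dim(P^0/K)$, where $P=\Phi^{-1}(K\lambda)$ and $P^0$ is the open subset of orbits of maximal dimension.

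For the easy implication ($\Leftarrow$), suppose $P=\Phi^{-1}(K\lambda)$ is a single $K$-orbit. Then $P^0=P$ and $P^0/K$ is a single point, so $r_\lambda=0$. By Theorem \ref{theo:beforemax} the quasi-polynomial $k\mapsto m_K^{\CH}(k\lambda)$ has degree $0$, so $\lambda$ is weakly stable.

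For the converse ($\Rightarrow$), assume $\lambda$ is weakly stable, i.e.\ $r_\lambda=0$ by Theorem \ref{theo:beforemax}. The reduced space $\CH_{\red,\lambda}=\Phi_K^{-1}(K\lambda)/K$ carries, via the GIT identification recalled earlier (the semi-stable locus modulo $K_{\C}$, as in \cite{git}), the structure of a projective variety; moreover, by the connectedness theorem of Atiyah/Kirwan for moment map fibers of Hamiltonian actions of compact connected groups, $\Phi_K^{-1}(K\lambda)$ is connected, hence so is $\CH_{\red,\lambda}$. A connected projective variety whose open set $P^0/K$ of orbifold points is zero-dimensional must be a single (reduced) point. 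Consequently $\Phi_K^{-1}(K\lambda)$ coincides with a single $K$-orbit, as claimed. (Alternatively, one may invoke the GIT description directly: the multiplicity $m_K^{\CH}(k\lambda)$ computes the dimension of the space of $K$-invariant sections of a line bundle $\CL^{\otimes k}$ on the GIT quotient; boundedness in $k$ forces $\CL$ to have zero-dimensional base locus on the quotient, which together with connectedness yields a single point.)

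The only real subtlety is the connectedness/irreducibility step: one must be careful when $\lambda$ lies on the boundary of $C_K(\CH)$, since then $P^0$ could in principle miss components of $P$ of smaller orbit dimension. The resolution is that Meinrenken--Sjamaar's setup (already used to obtain Theorem \ref{theo:beforemax}) equates $r_\lambda$ with the dimension of the full reduced space, not merely $P^0/K$, so once $r_\lambda=0$ the whole projective variety $\CH_{\red,\lambda}$ has dimension $0$. Combined with connectedness, this collapses $\CH_{\red,\lambda}$ to a point and finishes the proof.
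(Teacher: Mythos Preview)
The paper does not actually give a proof of this lemma: it simply asserts that it follows from ``Meinrenken--Sjamaar result (or in fact, here GIT theory)''. Your argument is precisely a fleshing-out of that one-line attribution, using Theorem~\ref{theo:beforemax} (itself credited to Meinrenken--Sjamaar) together with the GIT description of the reduced fiber, so your approach coincides with the paper's in spirit.

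One small point worth tightening: you invoke the Atiyah/Kirwan connectedness theorem for $\Phi_K^{-1}(K\lambda)$, but those results are usually stated for compact Hamiltonian manifolds, whereas $\CH$ is a vector space. The cleaner route---which you already gesture at in your parenthetical---is to use the GIT side directly: the reduced space is $\operatorname{Proj}$ of the graded ring $\bigoplus_k (Sym^k(\CH)\otimes V_{k\lambda}^*)^K$, which is a subring of the integral domain $Sym(\CH)\otimes Sym(V_\lambda^*)$ and hence itself an integral domain. Thus $\CH_{\red,\lambda}$ is an \emph{irreducible} projective variety, and once its dimension is $0$ it is automatically a single reduced point. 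This avoids the non-compactness issue entirely and is presumably what the paper means by ``here GIT theory''.
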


A point  $\lambda$ is called a stable point, if $m_K^{\CH}(\lambda)=1$,
and if for any $\gamma \in \Lambda_{\geq 0}$, the function
$k\to  m_K^{\CH}(\gamma+k\lambda)$ is an increasing and bounded function of $k$.

The following result, conjectured by Stembridge \cite{Stem},  has been obtained recently by  \cite{St-S}.

\begin{proposition}
A weakly stable point is stable.
\end{proposition}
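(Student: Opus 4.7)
The plan is to combine the geometric input of the preceding lemma with the standard algebra of covariants. By that lemma, weak stability of $\lambda$ is equivalent to $\Phi_K^{-1}(K\lambda)$ being a single $K$-orbit, so the reduced fiber $\CH_{red,\lambda}$ is a single point; since the moment map satisfies $\Phi_K(tv)=t^2\Phi_K(v)$, the same holds at every $k\lambda$ for $k\geq 1$.

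First I would verify that $m_K^{\CH}(\lambda)=1$. By Lemma~\ref{max} and Theorem~\ref{theo:beforemax}, the function $k\mapsto m_K^{\CH}(k\lambda)$ is quasi-polynomial of degree $r_\lambda=\dim\CH_{red,\lambda}=0$ on $k\geq 0$, hence equals a purely periodic function $c(k)$. For each sufficiently large $k$, the Meinrenken--Sjamaar formula identifies $m_K^{\CH}(k\lambda)$ with the Kawasaki--Riemann--Roch number of the single reduced point $\CH_{red,k\lambda}$ twisted by the $k$-th power of the prequantum line bundle, which equals $1$. Periodicity then forces $c\equiv 1$, so $m_K^{\CH}(k\lambda)=1$ for every $k\geq 0$, and in particular $m_K^{\CH}(\lambda)=1$.

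Next I would handle the second condition of stability. Let $U$ be a maximal unipotent subgroup of $K_\C$ and put $A:=Sym(\CH)^U$, a finitely generated $\C$-algebra by Hadziev's theorem; under the residual $T$-action one has $\dim A_\mu=m_K^{\CH}(\mu)$ for every dominant $\mu$. The subring $R:=\bigoplus_{k\geq 0}A_{k\lambda}$ is the homogeneous coordinate ring of the GIT quotient $\spec(A)/\!/_{\chi_\lambda}T$, and for any dominant $\gamma$ the subspace $M_\gamma:=\bigoplus_{k\geq 0}A_{\gamma+k\lambda}$ is naturally a graded $R$-module. By the standard finite-generation theorem for reductive semi-invariants (applied to the subtorus $\ker\chi_\lambda\subset T$ acting on $\spec(A)$), $M_\gamma$ is a finitely generated $R$-module.

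Finally, by the first step the Hilbert series of $R$ is $1/(1-t)$, so $R=\C[s]$ for any nonzero $s\in A_\lambda$. Since $A\subset Sym(\CH)$ is an integral domain, multiplication by $s$ is injective, so $M_\gamma$ is a torsion-free finitely generated $\C[s]$-module, hence free of some finite rank $d_\gamma$. Writing $M_\gamma\cong\bigoplus_{i=1}^{d_\gamma}\C[s](-n_i)$ with non-negative shifts $n_i$, its degree-$k$ piece has dimension $\#\{i:n_i\leq k\}=m_K^{\CH}(\gamma+k\lambda)$, which is manifestly non-decreasing in $k$ and stabilizes at $d_\gamma$ once $k\geq\max_i n_i$. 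The hardest part is the combination of the finite generation of $M_\gamma$ with the identification $R=\C[s]$ generated in degree one (as opposed to some Veronese of $\C[s]$), both of which combine reductive GIT with the quasi-polynomial input from the first paragraph; these are the technical points carried out in \cite{St-S}.
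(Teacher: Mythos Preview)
The paper does not give its own proof of this proposition; it simply records it as a theorem of Sam--Snowden \cite{St-S} settling Stembridge's conjecture. So there is no argument in the paper to compare against, only the cited reference.

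Your second and third paragraphs are on target and are essentially the Sam--Snowden mechanism: pass to the algebra of $U$-invariants $A=Sym(\CH)^{U}$, observe that $R=\bigoplus_{k}A_{k\lambda}$ is $\C[s]$ once one knows $m_K^{\CH}(k\lambda)=1$ for all $k$, and then use finite generation plus torsion-freeness to conclude that $M_\gamma=\bigoplus_k A_{\gamma+k\lambda}$ is a free $\C[s]$-module, whence its Hilbert function is non-decreasing and eventually constant. That part of your sketch is correct and matches \cite{St-S}.

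The gap is in your first paragraph. You claim that for large $k$ the Kawasaki--Riemann--Roch number of the single reduced point $\CH_{red,k\lambda}$ equals $1$, and then use periodicity to force $m_K^{\CH}(k\lambda)=1$ for every $k\ge 0$. But weak stability only says that $\Phi_K^{-1}(K\lambda)$ is one $K$-orbit; the reduced fibre is then a single \emph{orbifold} point $[\mathrm{pt}/\Gamma]$ for the finite isotropy $\Gamma$, and the Kawasaki number of such a point with a twisted line is $\frac{1}{|\Gamma|}\sum_{\gamma\in\Gamma}\chi(\gamma)^{k}$, which need not be $1$. The paper's own example of $K=U(2)/\{\pm 1\}$ acting on $\CH=\C^{3}$ (the example following Theorem~\ref{theo:multH}) shows this concretely: there $r=0$, so every lattice point of $C_K(\CH)$ is weakly stable in the paper's sense, yet $m_K^{\CH}(\lambda)=\tfrac12(1+(-1)^{\lambda_1})$ vanishes whenever $\lambda_1$ is odd (e.g.\ at $\lambda=[3,1]$), so such $\lambda$ cannot be stable. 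Thus the step ``degree $0$ $\Rightarrow$ $m_K^{\CH}(k\lambda)\equiv 1$'' fails in general, and with it your derivation of $R=\C[s]$ in degree one. The precise hypothesis used in Stembridge's conjecture and in \cite{St-S} is in fact that $m_K^{\CH}(k\lambda)=1$ for all $k\ge 0$; under that hypothesis your covariant-algebra argument goes through and is essentially the proof in the literature.
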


 L. Manivel \cite{MAN1}  has determined some  faces of the Kirwan cone $C_K(\CH)$
consisting of stable points in terms of compatible imbeddings,
and has described the corresponding stabilized multiplicity
$\lim_{k\to \infty} m_K^{\CH}(\gamma+k\lambda)$ in geometric terms.
Explicit descriptions in the case of the Kronecker cone
are given in \cite{MAN}.

Assume that the cone $C_K(\CH)$ intersect $\t^*_{>0}$.
If the inequations $X_a\geq 0$
of the Kirwan cone $C_K(\CH)$ are known,
the preceding discussion allows us to compute all stable points by elementary combinatorics,
except on the boundary of the Weyl chamber, see Example \ref{exa:632}.

\subsection {The  3-qubits example}\label{exa3qubits}

Before explaining our method to obtain  actual computations of the multiplicity function,
let us give a complete simple example,  \cite{BOR1}, \cite{BOR2}.
We  followed the exposition of \cite{C-D-W}.

We consider the action of
$U(2)\times U(2)\times U(2)$
on $\CH=\C^2\otimes \C^2\otimes \C^2.$

The representation of $U(2)\times U(2)\times U(2)$ in $Sym(\CH)$ decomposes as
$$Sym(\CH)=\oplus_{\lambda,\mu,\nu} g(\lambda,\mu,\nu) V_\lambda^{U(2)}\otimes V_\mu^{U(2)}\otimes V_\nu^{U(2)}$$
 over  polynomial irreducible
 representations
$\lambda,\mu,\nu$ of
$U(2)\times U(2)\times U(2)$.
They are indexed as
 $\lambda=[\lambda_1,\lambda_2], \mu=[\mu_1,\mu_2],  \nu=[\nu_1,\nu_2] $
with $\lambda_i, \mu_i, \nu_i\in \Z, i=1,2$, $\lambda_1\geq \lambda_2\geq 0$,
$\mu_1\geq \mu_2\geq 0$,
$\nu_1\geq \nu_2\geq 0.$

Though the dimension of a Cartan subalgebra of $U(2)\times U(2)\times U(2)$ is  $6$, we can restrict our attention to the subset
 $E \subset i\t^* $ defined by the equations
$$E=\{(\alpha,\beta,\gamma); \alpha_1+\alpha_2=\beta_1+\beta_2=\gamma_1+\gamma_2\}.$$
Here $\alpha=[\alpha_1,\alpha_2]$, with $\alpha_1,\alpha_2$ real numbers, and  $\alpha_1\geq \alpha_2$, etc..

Indeed, considering the action of the center,
it is easy that the Kirwan cone $C_K(\CH)$ is contained in $E$.
More precisely, the two dimensional subgroup
$$S=\{\left(
      \begin{array}{cc}
        t_1 & 0 \\
        0 & t_1 \\
      \end{array}
    \right), \left(
      \begin{array}{cc}
        t_2 & 0 \\
        0 & t_2 \\
      \end{array}
    \right), \left(
      \begin{array}{cc}
        t_3 & 0 \\
        0 & t_3 \\
      \end{array}
    \right);|t_i|=1;  t_1 t_2 t_3=1\}$$
    of $U(2)\times U(2)\times U(2)$ is contained in its center  and acts trivially on $\CH$.
    So we might consider $K=(U(2)\times U(2)\times U(2))/S$ acting on $\CH$, with Cartan subalgebra $\t$.
    Then $i\t^*$ is isomorphic to $E$.

Higuchi-Sudbery-Szulc equations \cite{H-Su-Sz}  (see  Example \ref{ex:bravyis}) for the Kirwan cone in $E$
are
$$C_K(\CH)=\left \{(\alpha,\beta,\gamma)\in E \ | \   \begin{array}{lll} \alpha_1\geq \alpha_2\geq 0, &\beta_1\geq \beta_2\geq 0,&\gamma_1\geq \gamma_2\geq 0,\\\alpha_2\leq \gamma_2+\beta_2,&\beta_2\leq \gamma_2+\alpha_2,&\gamma_2\leq \alpha_2+\beta_2\end{array} \right\}.$$

The image of pure states in $\CH$  is contained in the three dimensional affine space:
$$E_1=\{(\alpha,\beta,\gamma); \alpha_1+\alpha_2=\beta_1+\beta_2=\gamma_1+\gamma_2=1\}.$$

The group $\Sigma_3$ of permutations of $\alpha,\beta,\gamma$ acts on $E$.

We parameterize $E$ by $\R^4$ by associating to
$[[\alpha_1,\alpha_2], [\beta_1,\beta_2],[\gamma_1,\gamma_2]]$
the point $(t, \alpha_1-\alpha_2,\beta_1-\beta_2,\gamma_1-\gamma_2)$ in $\R^4$,
with $t= \alpha_1+\alpha_2= \beta_1+\beta_2=\gamma_1+\gamma_2$.
The Weyl chamber in
$\R^4=\{(t,x_1,x_2,x_3)\}$
is $x_i\geq 0$, and
Bravy's equations
become
$$\{x_1\geq 0,x_2\geq 0, x_3\geq 0, t+x_1\geq x_2+x_3,t+x_2\geq x_3+x_1, t+x_3\geq x_1+x_2\}.$$

As $E_1$ is isomorphic to $\{(1,x_1,x_2,x_3)\}$,  we can picture the Kirwan polytope  $\Delta(\CH)$
in $\R^+\times \R^+\times \R^+.$ It is the polytope  with $5$ vertices
$v_1,v_2,v_3,v_4,v_5$ (see Figure  \ref{Kirwan3qu}).
\begin{figure}[h]
\centering
\includegraphics[height=1.5 in]{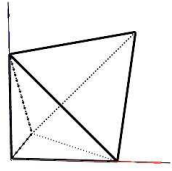}
\caption{Kirwan polytope for three qubits}
\label{Kirwan3qu}
\end{figure}

For example, the point $v_1:=(0,0,0)$
corresponds to  the triple $v_{min}$ in $E_1$, with  $$v_{min}=[[1/2,1/2],[1/2,1/2],[1/2,1/2]],$$
 and the point $v_2=(1,1,1)$ corresponds to the triple
  $$v_{max}=[[1,0],[1,0],[1,0]].$$.
The other vertices $v_3,v_4,v_5$ of  $\Delta(\CH)$  are
$v_3=[ 1, 0,  0], v_4= [0, 1,  0], v_5=[0,0, 1]$, corresponding to the
the triple $[[1,0],[1/2,1/2],[1/2,1/2]]$, and its permutations by $\Sigma_3$.

Thus the Kirwan polytope is an union of two  tetrahedra glued over the triangle $T$ with vertices
$[v_3,v_4,v_5]$.
Consider the center $c=(1/3,1/3,1/3)$
of this triangle, corresponding to the triple
$[[2/3,1/3],[2/3,1/3],[2/3,1/3]]$,
 and take the subdivision
of the triangle $T$ in $3$ triangles, $T_1,T_2,T_3$.
Then we have $6$ cones of polynomiality: the $3$ cones with basis the $3$ tetrahedra which are the convex hulls of
$(T_i,v_{min})$ or the $3$ cones with basis
$(T_i,v_{max})$.

Up to permutations by $\Sigma_3$, we obtain two cones.
Writing $\kappa=\alpha_1+\alpha_2=\beta_1+\beta_2=\gamma_1+\gamma_2,$

$$\CC_1=\{\alpha,\beta,\gamma\  | \  0\leq \alpha_1-\alpha_2
\leq min(\beta_1-\beta_2,\gamma_1-\gamma_2) \  {\text {and}}\
\alpha_1-\alpha_2+\beta_1-\beta_2+\gamma_1-\gamma_2\leq \kappa\}$$
and
$$\CC_2=\{\alpha,\beta,\gamma; 0\leq \alpha_1-\alpha_2
\leq min(\beta_1-\beta_2,\gamma_1-\gamma_2);
\alpha_1-\alpha_2+\beta_1-\beta_2+\gamma_1-\gamma_2\geq \kappa\}$$

Figure   \ref{Kirwan3quter}
represents  the cones $\CC_1$ or $\CC_2$ intersected with the hyperplane
$E_1$ of $E$.

\begin{figure}[h]\centering
\includegraphics[height=1.5 in]{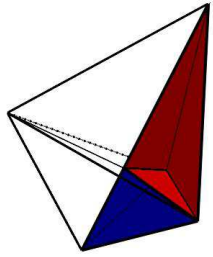}\caption{Cones $\CC_1$ and $\CC_2$ inside the Kirwan polytope}
\label{Kirwan3quter}
\end{figure}

The multiplicities on the cone $C_K(\CH)$
 have been described by Briand-Orellana-Rosas, \cite{BOR1}, \cite{BOR2}.
On each cone $g(\lambda,\mu,\nu)$ is a  periodic polynomial of degree $1$ and period $2$.
Set $k=\lambda_1+\lambda_2=\nu_1+\nu_2=\mu_1+\mu_2.$
Then

$$g(\lambda,\mu, \nu)=$$$$\left\{\begin{array} {ll} \frac{1}{2}(\lambda_1-\lambda_2)+
\frac{1}{4}\, \left( -1 \right) ^{\lambda_{{1}}+\mu_{{1}}+\nu_{{1}}}+\frac{1}{4}\, \left( -1 \right) ^{\lambda_{
{2}}+\mu_{{1}}+\nu_{{1}}}+\frac{1}{2}& \ {\text {if}} \  (\lambda,\mu, \nu) \in \CC_1\\
\lambda_{{1}}-\frac{1}{2}\,\mu_{{1}}+\frac{1}{2}\,\lambda_{{2}}-\frac{1}{2}\,\nu_{{1}}+\frac{1}{4}
\, \left( -1 \right) ^{\mu_{{1}}+\lambda_{{2}}+\nu_{{1}}}+\frac{3}{4}
 & \   {\text {if}} \  (\lambda,\mu, \nu) \in \CC_2\end{array}\right.$$
In particular  for example $v_{min}=[[1,1],[1,1],[1,1]]$ is in $\CC_1$  while $v_{max}=[[1,0],[1,0],[1,0]]$ is in $\CC_2.$

In this example, the multiplicity function takes values $0$ or $1$ on the boundary of $C_K(\CH)$.
In particular, we see that
$g([k,k],[k,k],[k,k])=\frac{1}{2}+(-1)^k\frac{1}{2}$, so that $g([k,k],[k,k],[k,k])$ is $1$, or $0$, if $k$ is even or odd, and
$$\sum_{k=0}^{\infty} g([k,k],[k,k],[k,k])t^k=\frac{1}{1-t^2},$$ as follows from the study of the Hilbert series of the ring of invariant
polynomials on
$\C^2\otimes \C^2\otimes \C^2$ under the action of $SL(2)\times SL(2)\times SL(2)$.

We conclude this example with Figure \ref{DHM3qu} that shows the Duistermaat-Heckman measure for three qubits. The drawing is along  the line
$[[1/2,1/2],[1/2,1/2],[1/2,1/2]]$ to $[[1,0],[1,0],[1,0]]$
between the bottom  vertex $v_{min}$ and the top vertex $v_{max}$
 of the Kirwan polytope up to the top vertex It grows linearly between
$v_{min}$ and $c$, then decreases from $c$ to $v_{max}$.
\begin{figure}[h]
\centering
\includegraphics[height=2 in]{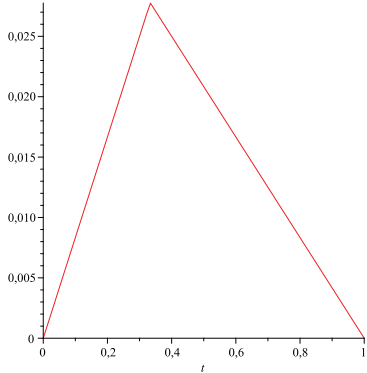}
\caption{Duistermaat-Heckman measure for three qubits: drawing  along  the line $[[1/2,1/2],[1/2,1/2],[1/2,1/2]]$ to $[[1,0],[1,0],[1,0]]$
}
\label{DHM3qu}
\end{figure}

Here is the discrete version of the multiplicity function.
We  compute
$g([6 k+s,6 k-s], [6 k+s,6 k-s],[6 k+s,6 k-s])$ for $s$ from $0$ to $6$.
Here is the answer:

                            $$[1, 1, 3, 2, 2, 1, 1],\hspace{1cm} {\rm for}\, k=1 $$
                   $$[1, 1, 3, 3, 5, 4, 4, 3, 3, 2, 2, 1, 1], \hspace{1cm} {\rm for}\,
                    k=2$$
          $$[1, 1, 3, 3, 5, 5, 7, 6, 6, 5, 5, 4, 4, 3, 3, 2, 2, 1, 1], \hspace{1cm}
         {\rm for} \,k=3$$

  It shows clearly the  quasi polynomial  behavior of the multiplicity on cones of polynomiality.

Our program  compute symbolically  the multiplicity function on the domains of polynomiality.

\subsection{Multiplicities and Partition functions }
We are now going to explicit the quasi polynomial functions determining
the Duistermat-Heckman  measure for the space $\CH$, and the multiplicity function
for $Sym(\CH)$
 whose existence is stated in Theorem \ref{multDH} and Theorem \ref{theo:multH}.

For computing multiplicities,
 the connection is made trough partition functions,
 whose computation is achieved  (using techniques developed in  \cite {BBCV}), via iterated residue of rational functions with poles on arrangement of hyperplanes.
 We give a simple example, explaining the philosophy  of the method, and we compare it with the algorithm
 of Guoce Xin \cite{Xin}  in Subsection \ref{sub:verysimple}.
 The reader may want first to read this subsection.
 We state   formulae  Theorem \ref{volumeT} and Theorem \ref{theo:DHK} for the  Duistermat-Heckman  measure and  Theorem \ref{multT}, and Theorem \ref{theo:multK} for the multiplicity.
Even if we could follow the same pattern,
 we will take a different approach for computing Kronecker coefficients.
  We will compute them as  a byproduct of the computation of branching coefficients, as explained  in detail in Section \ref{section:kro}.

To start with,  when $K=T$ is abelian, computation of multiplicities is the same problem than computing  the number of integral points in a (rational)
 polytope, while
 computation of Duistermaat-Heckman function is the same problem than computing  the volume  of a polytope.
In turn volumes  and   number of integral points in polytopes can be computed, as we just said,
as iterated residue of rational functions with poles on arrangement of hyperplanes. The precise statements are collected in Theorem \ref{volumeT} and Theorem \ref{multT}.

\bigskip

We write
$$Sym(\CH)=\oplus_{\mu\in \hat T} m_T^{\CH}(\mu) e^{\mu}.$$

Let $\Psi\subset i\t^*$ be the list of the weights for the action of $T$
on $\CH$ counted with multiplicities:
 $$\Psi=[\psi_1,\psi_2,\ldots,\psi_N].$$

We assume that the cone $Cone(\Psi)$ generated by  $\Psi$ is a pointed cone: $Cone(\Psi)\cap -Cone(\Psi)=\{0\}$, and that the lattice
of weights $\Lambda$ is generated by $\Psi$.

Let $\CP_\Psi$ be the function on $\Lambda$  that computes the number of ways we can write
$\mu\in \Lambda$ as $\sum x_i\psi_i$ with $x_i$ nonnegative integers.
The function $\CP_\Psi(\mu)$ is called the Kostant partition function (with respect to $\Psi$).
It is thus immediate to see that
$$m_T^{\CH}(\mu)=\CP_\Psi(\mu).$$

The moment map  $\Phi_T : \CH \rightarrow i\t^*$ is given by $$\Phi_T(\sum_{a=1}^N z_a e_a)=\sum |z_a|^2 \psi_a.$$
Here we have used an orthonormal basis $e_1,e_2,\ldots, e_N$ of $\CH$ where $T$ acts diagonally with weights
$[\psi_1,\ldots, \psi_N]$.
So the cone $C_T(\CH)$ is just  the cone
$Cone(\Psi)$ generated by the list $\Psi$ of weights.
Assume for simplicity that $\Psi$ generates $i\t^*$.
 It is thus a cone with non empty interior in $i\t^*$.

 Let $y\in i\t^*$.
 Define the polytope $$\Pi_\Psi(y)=\{[x_1,\ldots,x_N] \in \R^N, x_i\geq 0, \sum_{a=1}^N x_a\psi_a=y\}.$$
The Duistermaat-Heckman function $dh^{\CH}_T(y)$ is  the volume of the polytope $\Pi_\Psi(y)$
  and $m_T^{\CH}(\mu)$ ($\mu\in \Lambda$)  is the number of integral points in the polytope $\Pi_\Psi(\mu)$.

Define $C_T^{reg}(\CH)$ to be  the open subset of the cone  $C_T(\CH)$, where we removed from $C_T(\CH)$ the union of the boundaries of  the cones generated by all subsets of $\Psi$.
Let us write $C_T^{reg}(\CH)=\cup Y_a$ where $Y_a$ are the connected components  of $C_T^{reg}(\CH).$
Define $\c_a=\overline Y_a$, so we obtain a cone decomposition
$$C_T(\CH)=\cup_a \c_a.$$
Here all cones $\c_a$ are {\bf closed} and  have non empty interiors.

The following result explicit Theorem \ref{multDH} in this setting and the well  known behavior of the partition function .
\begin{proposition}
\begin{itemize}
\item The function  $dh^{\CH}_T(y)$ is given by a homogeneous polynomial function $d_{a}^{\Psi}$  on $\c_a.$
 The degree of $d_{a}^{\Psi}$ is equal to $|\Psi|-\dim \t$.
\item  There exists a quasi polynomial function $p_a^\Psi$ on $\Lambda$ such that
 $$\CP_\Psi(\mu)=p_a^\Psi(\mu)$$  on  the closed cone $\c_a.$
 The degree of $p_a^{\Psi}$ is equal to $|\Psi|-\dim \t$ and its highest degree term is equal to $d_{a}^{\Psi}$.

  \item In particular $m_T^\CH(\mu)=p_a^\Psi(\mu)$ on  the closed cone $\c_a.$
\end{itemize}
\end{proposition}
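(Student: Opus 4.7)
My approach: all three bullets follow from the classical theory of vector partition functions (Khovanskii--Pukhlikov, Dahmen--Micchelli, Brion--Vergne, Szenes--Vergne), combined with the geometric identifications already established in the excerpt. Bullet three is immediate once bullet two is proved, because the identity $m_T^{\CH}(\mu)=\CP_\Psi(\mu)$ was established just above the statement. So the real content is bullets one and two, together with the matching of leading terms.

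The first step of the plan is to recall what the $\c_a$ \emph{are}: by construction $C_T^{reg}(\CH)$ is obtained from $C_T(\CH)=Cone(\Psi)$ by removing the union of all hyperplanes through the origin spanned by subsets of $\Psi$ of cardinality $\dim\t-1$, and the $\c_a$ are the closures of its connected components. The crucial combinatorial input is that for $y$ varying inside the open chamber $Y_a$, the set of simplicial subcones $\sigma\subset Cone(\Psi)$ (generated by bases extracted from $\Psi$) that contain $y$ is constant. This stability is what drives both the polynomiality of the volume and the quasi-polynomiality of $\CP_\Psi$; equivalently, the Jeffrey--Kirwan residue selection of Section~\ref{linearrep} is locally constant on each $Y_a$.

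For bullet one, I would invoke the standard parametric decomposition of the fiber polytope $\Pi_\Psi(y)$: its vertices are piecewise-linear functions of $y$ which are linear on each $Y_a$, hence $\vol\Pi_\Psi(y)$ is polynomial of degree equal to $\dim\Pi_\Psi(y) = |\Psi|-\dim\t$ there. Alternatively, one reads off the homogeneous polynomial $d_a^{\Psi}$ directly from the iterated-residue formula of Theorem~\ref{volumeT}. For bullet two, I would start from the generating identity
$$\sum_{\mu\in\Lambda}\CP_\Psi(\mu)\,e^{\la\mu,X\ra}=\prod_{a=1}^{N}\frac{1}{1-e^{\la\psi_a,X\ra}},$$
and apply the Jeffrey--Kirwan residue technique developed in the paper. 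On a fixed chamber $\c_a$ the residue picks out the \emph{same} collection of simplicial flags for every $\mu\in\c_a$, producing a single closed-form expression that is a quasi-polynomial $p_a^\Psi$ in $\mu$. The degree count $|\Psi|-\dim\t$ comes from the $\dim\t$ residue variables eating $\dim\t$ of the $N$ factors, leaving $N-\dim\t$ factors to be Taylor-expanded, each contributing at most degree one.

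The hardest (though still standard) step is the matching of leading terms, i.e.\ showing that the top-degree homogeneous component of $p_a^\Psi$ equals $d_a^\Psi$ and in particular is purely polynomial, not genuinely quasi-polynomial. The cleanest way to see this is the Riemann-sum limit
$$\lim_{k\to\infty} k^{-(|\Psi|-\dim\t)}\,\CP_\Psi(k\mu)=\vol\Pi_\Psi(\mu)=d_a^\Psi(\mu),\qquad \mu\in\c_a,$$
which is immediate from the interpretation of $\CP_\Psi(k\mu)$ as counting lattice points in the dilated polytope $\Pi_\Psi(k\mu)=k\,\Pi_\Psi(\mu)$. Combined with the fact that a quasi-polynomial of degree $d$ with periodic top coefficient would violate this limit (the average over a period of the top coefficient must equal $d_a^\Psi$, and by homogeneity the top coefficient itself must be that polynomial), this forces the top homogeneous component of $p_a^\Psi$ to be the polynomial $d_a^\Psi$. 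The main obstacle is purely notational: keeping the chambers of polynomiality for $dh^{\CH}_T$ and of quasi-polynomiality for $\CP_\Psi$ simultaneously under control, which is exactly why the $\c_a$ were defined by removing \emph{all} walls spanned by subsets of $\Psi$ rather than just the walls of the Kirwan cone.
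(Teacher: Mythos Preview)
The paper does not actually supply a proof of this proposition: it is presented as a known fact, introduced by the sentence ``The following result explicit[s] Theorem~\ref{multDH} in this setting and the well known behavior of the partition function.'' The explicit formulas that would underpin such a proof are then stated (without proof) a few pages later as Theorem~\ref{volumeT} and Theorem~\ref{multT} (the Szenes--Vergne formula). Your sketch is correct and is exactly the classical argument the authors are invoking: chamber-wise polynomiality of the volume of $\Pi_\Psi(y)$, chamber-wise quasi-polynomiality of $\CP_\Psi$ via the Jeffrey--Kirwan/iterated-residue mechanism, and the Ehrhart-type limit to identify the top homogeneous component with $d_a^\Psi$.

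One small remark on your last paragraph: the cleanest way to see that the top homogeneous component of $p_a^\Psi$ is an honest polynomial (not merely quasi-polynomial) is not the Riemann-sum limit by itself, but the observation---visible directly in Definition~\ref{defmultT}---that only the term $\gamma=0$ in the sum over $\Gamma/q\Gamma$ contributes in top degree. For $\gamma\neq 0$, some of the factors $1-e^{-\la\psi,z+2i\pi\gamma/q\ra}$ no longer vanish at $z=0$, so the pole order drops and the resulting contribution has strictly lower degree in $\mu$. The $\gamma=0$ term is exactly the residue defining $d_a^\Psi$ in Theorem~\ref{volumeT}. Your limit argument reaches the same conclusion but requires a bit more care to rule out genuinely periodic top coefficients in several variables.
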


\bigskip

Let $K$ be a compact Lie group acting on $\CH$ by unitary transformations, and let us assume that
the action of its maximal Cartan subgroup $T$ has finite multiplicity. In other words, the weights of $T$ in $\CH$ span a pointed cone.
Then
$$Sym(\CH)=\oplus _{\lambda} m_K^{\CH}(\lambda) V_\lambda^K$$
and we have, for $\lambda\in \Lambda_{\geq 0}$,
$$m_K^{\CH}(\lambda)=\sum_{w\in \CW_\k} \epsilon(w) \CP_\Psi(\lambda+\rho_\k-w\rho_\k).$$

The cone $C_K(\CH)$ is contained in $C_T(\CH)\cap i\t^*_{\geq 0}$ but usually smaller.
Let us give some simple examples.

\begin{example}   \end{example}
Let us consider the standard  action of $U(2)$ on $\C^2$ with weights $\epsilon_1=[1,0], \epsilon_2=[0,1]$.
The moment cone for $T$ is thus $\R_{\geq 0}\epsilon_1\oplus \R_{\geq 0}\epsilon_2$.
The Weyl chamber is $\t^*_{\geq 0}=\{\xi_1 \epsilon_1+\xi_2 \epsilon_2; \xi_1\geq \xi_2\}$.
We have $\Phi_K(z)=z z^*$, if we consider $z\in \C^2$ as a map $\C^2 \to \C$. Thus $\Phi_K(z)$ is a Hermitian non negative matrix of rank $1$. We thus have $\Delta_K(\CH)=\R_{\geq 0}\epsilon_1$.
In particular, the cone $C_T(\CH)$ is solid in $\t^*$, while
$C_K(\CH)$ is not solid. $\Box$

\bigskip

Even when both $C_T(\CH)$ and $C_K(\CH
)$  are solid, they can be different.
\begin{example}  $C_T(\CH)$ for the $3$-qubits.\end{example}

Return for example to the case of $3$ qubits \ref{exa3qubits} and we keep the same notations.
The  weights of the action of $T$ in $\CH=\C^2\otimes \C^2\otimes \C^2$ on the weight vectors
$e_{\pm}\otimes e_{\pm}\otimes e_{\pm}$ with $\C^2=\C e_+\oplus \C e_-$
are $[[1,0],|1,0],[1,0]]$ and its permutation by the Weyl group $\Sigma_2\times \Sigma_2\times \Sigma_2$  of $K$.
In the parameters $\R^4$, the weights are
$[1,\epsilon_1,\epsilon_2,\epsilon_3]$ with $\epsilon_i\in\{1,-1\}$.
So we see that $C_T(\CH)$ is the cone over the cube
        $C$
        with vertices $[\epsilon_1,\epsilon_2,\epsilon_3]$ with $\epsilon_i\in \{-1,1\}$.
        The intersection of $C_T(\CH)$ with the positive Weyl chamber
        is the cone over $C$ intersected with the positive quadrant.
        This is again a cube with vertices
         $[\nu_1,\nu_2,\nu_3]$ with $\nu_i\in \{0,1\}.$

Figure \ref{CKT} shows the Kirwan cone $C_K(\CH)$ inside $C_T(\CH)\cap i\t^*_{\geq 0}$.
We see that the points $(1,1,0)$ (and permutations) are vertices of
$C_T(\CH)\cap i\t^*_{\geq 0}$, but they do not belong to $C_K(\CH)$.
The Kirwan polytope $C_K(\CH)$ is obtained by removing from $C_T(\CH)$  a "neighborhood" of these vertices, namely
the simplices with vertices
$(1,1,0), (1,1,1), (1,0,0),(0,1,0)$ (and  its permutations).

\begin{figure}[h]\centering
\includegraphics[height=1.5 in]{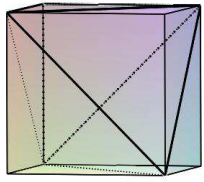}\caption{$K$-Kirwan polytope for three qubits  inside $T$-Kirwan polytope}
\label{CKT}
\end{figure}
$\Box$

\bigskip

 Assume (to simplify) that $K$ acts on $\CH$ with generic finite stabilizer. Then $C_K(\CH)$ is a cone in $i\t^*_{\geq 0}$ with non empty interior and   it follows from the formula
 for Duistermaat-Heckman measure that the  Kirwan cone $C_K(\CH)$ is the union of  the solid cones $\c_a\cap i\t^*_{\geq 0}$
contained in it.
As seen in the preceding example, it is not easy to understand which ones have this property.

We denote by $\partial_{-\alpha}f(y)=\frac{d}{dt} f(y-t\alpha)\vert_{t=0}$ the derivative  of a function $f$ on $i\t^*$ in the direction $-\alpha$.
Let $(\prod_{\alpha>0} \partial_{-\alpha})$ the product of derivatives with respect to all negative roots in $\Delta_\k$.
Consider the cone decomposition $C_T(\CH)=\cup_a \c_a$.

\begin{theorem}\label{theo:reductoT}
Assume $C_K(\CH)$ is a solid cone.

Let $C_T(\CH)=\cup_a \c_a$.
Consider $\c_a$ such that $\c_a\cap i\t^*_{\geq 0}\subset C_K(\CH)$.
\begin{itemize}
\item  On $\c_a\cap  i\t^*_{\geq 0}$
we have $$dh^{\CH}_K=(\prod_{\alpha>0} \partial_{-\alpha})\cdot d_{a}^{\Psi}$$
\item Let $p_a^{\Psi}$ be the quasi polynomial function on $\Lambda$ such that
$\CP_\Psi(\mu)=p_a^{\Psi}(\mu)$ on $\c_a.$
Then, on $\c_a\cap  \Lambda_{\geq 0}$,
 we have
$$m_K^{\CH}(\lambda)=\sum_{w\in \CW_\k} \epsilon(w) p_a^{\Psi}(\lambda+\rho_\k-w\rho_\k).$$
\end{itemize}
\end{theorem}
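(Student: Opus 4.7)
The plan is to establish the multiplicity formula (ii) first, as a direct consequence of the Weyl reduction to the torus combined with a quasi-polynomial agreement argument, and then to deduce the Duistermaat--Heckman identity (i) by extracting its polynomial leading part.

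For the multiplicity identity, begin with the Weyl reduction \eqref{eq:multKmultT}: $m_K^{\CH}(\lambda) = \sum_{w \in \CW_\k} \epsilon(w)\, m_T^{\CH}(\lambda + \rho_\k - w\rho_\k)$. Since $Sym(\CH)$ is the polynomial algebra on an orthonormal $T$-eigenbasis of $\CH$ with weight list $\Psi$, a $T$-weight vector of weight $\mu$ is just a monomial in these basis vectors, so $m_T^{\CH}(\mu) = \CP_\Psi(\mu)$. Hence $m_K^{\CH}(\lambda) = \sum_w \epsilon(w)\, \CP_\Psi(\lambda + \rho_\k - w\rho_\k)$. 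Introduce the quasi-polynomial $F_a(\lambda) := \sum_w \epsilon(w)\, p_a^\Psi(\lambda + \rho_\k - w\rho_\k)$ on $i\t^*$. The claim of (ii) is that $F_a$ coincides with the quasi-polynomial $p_{K,a}^{\CH}$ of Theorem \ref{theo:multH} on $\c_a \cap \Lambda_{\geq 0}$. By Lemma \ref{lem:equaquasipoly} it suffices to verify the identity on a translate $s + \tau$ of some open cone $\tau$. Choose $\tau$ inside the interior of $\c_a \cap i\t^*_{\geq 0}$, which is nonempty since $\c_a \cap i\t^*_{\geq 0}$ is a solid subcone of $C_K(\CH)$. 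Since the finite set $\{\rho_\k - w\rho_\k : w \in \CW_\k\}$ is bounded, one can translate by a sufficiently deep $s$ so that $\lambda + \rho_\k - w\rho_\k$ stays in $\c_a$ for every $\lambda \in s + \tau$ and every $w$. For such $\lambda$, each $p_a^\Psi(\lambda + \rho_\k - w\rho_\k)$ equals $\CP_\Psi(\lambda + \rho_\k - w\rho_\k)$, whence $F_a(\lambda) = m_K^{\CH}(\lambda) = p_{K,a}^{\CH}(\lambda)$. Lemma \ref{lem:equaquasipoly} then promotes this to equality of quasi-polynomials on $\c_a$, and (ii) follows.

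For the Duistermaat--Heckman identity, reinterpret the denominator formula \eqref{eq:denom} operator-theoretically: $\sum_w \epsilon(w)\, S_{\rho_\k - w\rho_\k} = \prod_{\alpha > 0}(1 - S_\alpha)$, where $S_\eta$ denotes the shift $f(\xi) \mapsto f(\xi + \eta)$. The identity of (ii) then reads $m_K^{\CH}(\lambda) = \bigl[\prod_{\alpha > 0}(1 - S_\alpha)\bigr]\, p_a^\Psi(\lambda)$. The torus proposition just preceding the theorem tells us that $p_a^\Psi = d_a^\Psi + (\text{quasi-polynomial of strictly smaller polynomial degree})$, where $d_a^\Psi$ is the homogeneous polynomial of degree $|\Psi| - \dim\t$. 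On polynomial arguments each finite difference expands as $(1 - S_\alpha) = -\partial_\alpha + O(\partial_\alpha^2)$, so $\prod_{\alpha > 0}(1 - S_\alpha)$ begins with the pure first-order term $\prod_{\alpha > 0}\partial_{-\alpha}$ and continues with higher-derivative operators that strictly lower polynomial degree. Consequently the unique polynomial contribution of degree $r = (|\Psi| - \dim\t) - |\Delta_\k^+|$ to the right-hand side is $\bigl(\prod_{\alpha > 0}\partial_{-\alpha}\bigr)\, d_a^\Psi$. On the left, the polynomial leading part of the quasi-polynomial $p_{K,a}^{\CH} = m_K^{\CH}$ of degree $r$ is precisely $d_{K,a}^{\CH} = dh_K^{\CH}|_{\c_a \cap i\t^*_{\geq 0}}$, by Theorem \ref{multDH} and the semiclassical (Riemann-sum) interpretation of the Duistermaat--Heckman density as the polynomial leading term of the multiplicity quasi-polynomial. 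Equating the two polynomial leading parts yields (i).

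The delicate step is the passage from the multiplicity identity to the Duistermaat--Heckman identity: Example \ref{ex:Walter} shows that the top quasi-polynomial degree of $p_{K,a}^{\CH}$ may carry genuinely periodic contributions (inherited from the lower quasi-polynomial tail of $p_a^\Psi$ under $\prod(1 - S_\alpha)$, since periodic coefficients need not be annihilated by the finite differences), so one must carefully isolate the purely polynomial portion of the leading term to recover $dh_K^{\CH}$. The multiplicity part is technically simpler; its only nontrivial input is the existence of a deeply translated open subcone of $\c_a \cap i\t^*_{\geq 0}$ inside which all finitely many shifts $\lambda + \rho_\k - w\rho_\k$ remain in $\c_a$, which uses both solidity of $C_K(\CH)$ and the boundedness of the shift set.
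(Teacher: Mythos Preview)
Your argument for the multiplicity identity (the second bullet) is essentially the paper's own proof: both start from the Weyl reduction $m_K^{\CH}(\lambda)=\sum_w\epsilon(w)\CP_\Psi(\lambda+\rho_\k-w\rho_\k)$, observe that sufficiently deep inside $\c_a\cap i\t^*_{\geq 0}$ all shifted points $\lambda+\rho_\k-w\rho_\k$ remain in $\c_a$ so that $\CP_\Psi$ may be replaced by $p_a^\Psi$, invoke Meinrenken--Sjamaar (Theorem~\ref{theo:multH}) to know $m_K^{\CH}$ is quasi-polynomial on the full closed set $\c_a\cap\Lambda_{\geq 0}$, and conclude by Lemma~\ref{lem:equaquasipoly}.

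The paper, however, proves only this second bullet; it gives no argument for the Duistermaat--Heckman identity. You attempt to derive it from the multiplicity formula by isolating the polynomial top-degree term, and this is where a gap appears. Your extraction correctly shows that the polynomial part of degree $r$ of $\sum_w\epsilon(w)p_a^\Psi(\lambda+\rho_\k-w\rho_\k)$ is $(\prod_{\alpha>0}\partial_{-\alpha})d_a^\Psi$. But the step identifying the polynomial leading part of $p_{K,a}^{\CH}$ with $dh_K^{\CH}$ is precisely what needs to be proved; you invoke a ``semiclassical (Riemann-sum) interpretation'' that the paper never states. Example~\ref{ex:Walter} shows the issue concretely: there $m_K^{\CH}(\lambda)=\tfrac{1}{2}(1+(-1)^{\lambda_1})$ has polynomial part $\tfrac{1}{2}$, and your operator calculation gives $(\partial_{-\alpha})(\tfrac{1}{2}\xi_2)=\tfrac{1}{2}$ as well, yet the paper records $dh_K^{\CH}=1$. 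So either a normalization factor intervenes in the statement of the first bullet, or the passage from the multiplicity quasi-polynomial to the Duistermaat--Heckman density requires an independent argument on the level of measures (via the Harish-Chandra/Heckman pushforward formula relating $dh_T^{\CH}$ and $dh_K^{\CH}$) rather than a deduction from (ii). Your proposal does not supply this.
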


\begin{remark} \end{remark}
We see that the degree of  the polynomial function $dh^{\CH}_K$, and $m_K^{\CH}$ on each $\c_a$
is at most equal to the degree of the corresponding functions for $T$ minus the number $|\Delta_\k^+|$ of positive roots.
Our algorithm will indeed be easier for $K$ than for $T$.

  $\Box$
  \bigskip

\begin{proof}
Consider first the open subset $\tau_Q$  of
$\c_a\cap  i\t^*_{\geq 0}$, at distance $Q$ of its boundary.
We assume $Q$ greater than the norm of $\|\rho_\k-w\rho_\k\|$ for all $w\in \CW_\k$.
If we use  the formula $$m_K^{\CH}(\lambda)=\sum_{w\in \CW_\k} \epsilon(w) \CP_\Psi(\lambda+\rho_\k-w\rho_k)$$
and $\lambda$ is in $\tau_Q$, all points $\lambda+\rho_\k-w\rho_k$ are in $\c_a$ and we may replace $\CP_\Psi$ by the function $p_a^{\Psi}$.
We thus obtain on $\tau_Q$ the expression
$$m_K^{\CH}(\lambda)=\sum_{w\in \CW_\k} \epsilon(w) p_a^\Psi(\lambda+\rho_\k-w\rho_k).$$
However the function $\lambda\to\sum_{w\in \CW_\k} \epsilon(w) p_a^{\Psi}(\lambda+\rho_\k-w\rho_\k)$ on the right side of the above equation
 is a quasi polynomial function of $\lambda$ {\bf} on $\c_a\cap (i\t^*_{\geq 0})$.
By Meinrenken-Sjamaar, Theorem \ref {theo:multH}, we know that
$m_K^{\CH}(\lambda) $ coincide with a quasi polynomial function on the closed cone $\c_a\cap \Lambda_{\geq 0}$, under our assumption that
$\c_a\cap i\t^*_{\geq 0}\subset C_K(\CH)$.

Using Lemma \ref{lem:equaquasipoly}, we obtain the equality on $\c_a\cap \Lambda_{\geq 0}$.
\end{proof}

\subsection{Back to degrees on faces}

Assume that the cone $C_K(\CH)$ is solid.
We rewrite the formulae for the degrees $r_F$ on faces of the multiplicities, using $\Psi$.
 Let $F$ be a facet of $C_K(\CH)$ determined by an equation $X=0$ intersecting $i\t^*_{>0}$
(that is  $X$ is not proportional to a fundamental coroot $H_\alpha$).
Then we compute $\CH(X)$, the subspace of $\CH$ annihilated by $X$.
Its complex dimension is the cardinal of $\Psi_0$, where $\Psi_0$ is the sublist of $\Psi$ composed of the $\psi_j$ such that
$\psi_j(X)=0$.
The system $\Delta_0^+$ of positive roots for $\k(X)$ is the sublist of $\Delta_\k^+$ composed of the roots $\alpha_j$ such that
$\alpha_j(X)=0$. So we obtain the formula

\begin{lemma}
On the facet $F$, the degree of the multiplicity function is
$|\Psi_0|-|\Delta_0^+|- (\dim \t-1).$
\end{lemma}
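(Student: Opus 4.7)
The plan is to apply the earlier regular face lemma (the one just before Lemma~\ref{max} which gives $r_F=\dim_\C \CH(T_F)-\dim(F)-|\Delta^+_{\k_F}|$) to our facet $F$, and then to translate the three ingredients $\dim(F)$, $|\Delta^+_{\k_F}|$ and $\dim_\C \CH(T_F)$ into the combinatorial data $\dim \t$, $|\Delta_0^+|$ and $|\Psi_0|$. Since $C_K(\CH)$ is assumed to be solid in $i\t^*$, any facet has dimension $\dim \t - 1$, so the first reduction $\dim(F)=\dim\t-1$ is immediate.

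Next I would check that $F$ is a regular face in the sense required by Theorem~\ref{th:restriction}. The hypothesis that the equation $X=0$ intersects $i\t^*_{>0}$ is exactly the statement that $F$ contains elements of $C_K(\CH)\cap i\t^*_{>0}$, hence $F$ is regular. The orthogonal $\lin(F)^{\perp}$ is then the one-dimensional line $\R X$, so the torus $T_F$ is the one-parameter subgroup generated by $X$. Its centralizer $K_F$ is the connected subgroup $K(X)$ with Lie algebra $\k_F=\k(X)$, whose positive root system is precisely $\Delta_{\k_F}^+ = \{\alpha\in\Delta_\k^+ : \alpha(X)=0\} = \Delta_0^+$, giving $|\Delta^+_{\k_F}|=|\Delta_0^+|$.

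It remains to identify $\CH(T_F)$ with the space $\CH(X)$. Decomposing $\CH$ into the $T$-weight lines spanned by the orthonormal basis $e_1,\dots,e_N$ (with weights $\psi_1,\dots,\psi_N$), the subspace fixed by the one-parameter subgroup $T_F=\exp(\R X)$ is the sum of those weight lines $\C e_j$ with $\psi_j(X)=0$; this is by definition $\C\Psi_0$, so $\dim_\C \CH(T_F)=|\Psi_0|$. Plugging the three identifications into $r_F=\dim_\C \CH(T_F)-\dim(F)-|\Delta^+_{\k_F}|$ gives the claimed value $|\Psi_0|-|\Delta_0^+|-(\dim\t-1)$.

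I do not expect any real obstacle: the content of the lemma is a bookkeeping reformulation of an already-stated theorem. The only mildly delicate point is verifying that $\Delta_{\k_F}^+=\Delta_0^+$ as positive systems (not merely as root systems), which is automatic because we may choose on $\k_F$ the positive system induced by restriction from the chosen positive system on $\k$, and this is compatible with the orientation used to define $r_F$.
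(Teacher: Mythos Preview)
Your proposal is correct and follows exactly the approach of the paper: the lemma is stated as an immediate rewriting of the earlier regular-face formula $r_F=\dim_\C \CH(T_F)-\dim(F)-|\Delta^+_{\k_F}|$, and the paragraph preceding the lemma in the paper makes precisely the three identifications you spell out ($\dim F=\dim\t-1$, $\dim_\C\CH(T_F)=|\Psi_0|$, $|\Delta^+_{\k_F}|=|\Delta_0^+|$). The paper gives no further proof beyond these identifications, so your write-up is in fact more detailed than the original.
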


\subsection{Tools for computations}\label{tools}
From now on, we focus on the computation of $m_K^{\CH}(\lambda).$

\subsubsection{Residue calculations: the philosophy}
Consider the case of  $n$ dimensional torus  $T=\{(u_1,u_2,\ldots, u_n); |u_i|=1\} $ acting on $\CH=\C^N$.
We denote by $u\to u^{\lambda}=\prod_{i} u_i^{\lambda_i}$ the character of $T$ indexed by $\lambda\in \Lambda=\Z^n.$
Assume that all weights $\psi_i$ of the action of $T$  on $\CH$ have non negative coordinates.
Almost by definition, the
computation of $m_T^{\CH}(\lambda)$
is the computation
of the Fourier series of the function
$\CS(u)=\prod_{i}\frac{1}{1- u^{\psi_i}}$, on $|u_i|<1$.
So we obtain, choosing $\epsilon_i$ small positive real numbers,

\begin{equation}\label{eq:dumbresidue}
m_T^{\CH}(\lambda)=\int_{|u_j|=\epsilon_j} u^{-\lambda}\prod_{j=1}^N\frac{1}{1- u^{\psi_j}}\prod_{j} \frac{ du_j}{2i \pi u_j}.
\end{equation}

We choose an order of integration, and the residue theorem in one variable allows to integrate the variable $u_1$, but we obtain a priori
 $N-1$ new integrals in $n-1$ variables, by considering all poles  in $u_1$ of the functions  $(1-u^{\phi_j})$.
Not all of these poles are in the interior of the circle $|u_1|=\epsilon_1$,
then we have to keep track of the branchings. At the end we obtain a very {\it {large}} number of products of one dimensional residues computations.

The essence of Jeffrey-Kirwan residue is to compute a priori the
paths which contributes.
In fact, it is easy to see that the function
$\prod_{j=1}^N\frac{1}{1- u^{\psi_j}}$ can be written as a sum of functions $f_\alpha$, where each $f_\alpha$ depend of
 $n$ independent variables. Here $\alpha$ varies in a large set $\CT$.
For example
$$\frac{1}{(1-u_1) (1-u_1 u_2)(1-u_2)}=\frac{1}{(1-u_1u_2)^2(1-u_1)}+\frac{u_2}{(1-u_1u_2)^2(1-u_2)}$$
and we can separately use the independent variables $(u_1, u_1 u_2)$ or $(u_2,u_1u_2)$.
In the residue computation  at $\lambda$, only a certain $\CT(\lambda)$ of these functions $f_\alpha$ will contribute.
So we compute a priori
what is the subset $\CT(\lambda)$ and we are reduced to product of residues in one variables.
This is the object of the computation of adapted Orlik Solomon bases.
We give now the technical details of the resulting formula.

\subsubsection{Definitions of regular elements and topes}

 Consider a finite set $F$ of non zero vectors  in a real vector space $E$.
\begin{definition}\label{defiadmissible}
We say that a hyperplane $H\subset E$ is $F$-admissible if $H$ is generated by elements of $F$.
We denote by $\CA(F)$ the set of admissible hyperplanes.
\end{definition}

Assume that $F$ generates a lattice $L$ in $E$.
Let $\sigma$ be a subset of $F$, generating $E$ as a vector space.
We say that $\sigma$ is a basis of $F$.
We denote by $d_\sigma$ the smallest integer such that
$d_\sigma L$ is contained in the lattice generated by $\sigma$.
\begin{definition}

Consider the set $periods(F)=\{d_\sigma\}$ where $\sigma$ runs over the basis of $F$.
  Let  $q(F)$ is the least common multiple of the integers  in $periods (F)$
We say that $q(F)$ is the index of the finite set $F$ (with respect to
$L$).
\end{definition}

\begin{definition}

Consider a finite set $\CF$ of hyperplanes in a real vector space $E$.
We say that $\xi\in E$ is $\CF$ regular, if $\xi$ is not on any hyperplane belonging to $\CF$.

Let $E_{reg\CF}$ be the set of $\CF$-regular elements.
A connected component $\tau$ of $E_{reg\CF}$ will be called a tope (with respect to $\CF$).
If $\xi$ is $\CF$-regular, we denote by $\tau(\xi)$ the unique tope containing $\xi$.
\end{definition}

Return to the situation where $\Psi\subset i\t^*$ is the set of weights of $T$ in $\CH$.
 When $\CF$ is the family $\CA(\Psi)$ of $\Psi$-admissible hyperplanes, we also say that $\xi$ is $\Psi$-regular instead of
 $\CF$-regular.
A tope for the family $\CA(\Psi)$ will be called a $\Psi$-tope.

\subsubsection{Iterated residues and Orlik-Solomon  bases}
A list ${\overrightarrow{\sigma}}=[\psi_{i_1},\psi_{i_2,}\ldots, \psi_{i_r}]$ of elements of $\Psi$ will be called a \textbf{ordered basis} if the
elements $\psi_{i_k}$  form a basis of $i\t^*$.
Let ${\overrightarrow{\mathfrak{B}}}(\Psi)$ be  the set of ordered bases.

\noindent An ordered base $=[\psi_{i_1},\psi_{i_2,}\ldots, \psi_{i_r}]$ is an \textbf{Orlik Solomon base}, {\it {$OS$}} in short, if   for each $1\leq l\leq r,$ there is no $j< i_l$ such that the elements $\psi_j,\psi_{i_l},\ldots, \psi_{i_r}$ are linearly dependent. Denote by $\mathcal{OS}(\Psi)$  the set of $OS$ bases.
\begin{definition}\label{OS}

If $\tau$ is a $\Psi$-tope,
  we denote by $\mathcal{OS}(\Psi,\tau)=\{\sigma \in \mathcal{OS}(\Psi), \ \tau \subset Cone(\sigma)$. Here $Cone(\sigma)=\sum \R_{\geq 0} \psi_{i_j}.$

\end{definition}

The set $\mathcal{OS}(\Psi,\tau)$ is called  the set of \textbf{OS adapted bases to $\tau$}.

We will show in Section \ref{computeOS} how to compute  adapted bases. For an account of the theory cf. \cite{BBCV}.

We now define the notion of iterated residue.
If $f$ is a meromorphic function in one variable $z$, consider its Laurent series $\sum_n a_n z^n$ at $z=0$.
The coefficient of $z^{-1}$ is denoted by $\Res_{z=0}f$.

Let $\overrightarrow{\sigma}=[\alpha_1,\alpha_2,\ldots, \alpha_r]$ be an ordered basis of $i\t^*$.
For $z\in \t_\C$, let $z_j=\langle z,\alpha_j\rangle$, and  express a meromorphic function $f(z)$ on $\t_\C$ with poles on a union
of hyperplanes  as a function  $f(z)=f(z_1,z_2,\ldots,
z_r),
$
(in particular $f$ may have poles on $z_j=0$).

Consider the associated iterated residue functional
defined by:
\begin{equation} \label{defres}  Res_{{\overrightarrow{\sigma}}}(f(z)):=
 \Res_{z_1=0}(\Res_{z_2=0}\cdots(\Res_{z_r=0}f(z_1,z_2,\ldots,z_r))\cdots).
 \end{equation}

\subsubsection{ Multiplicity and Duistermaat-Heckman  function for the torus $T$}
 Fix a cone of polynomiality $\c_a$, then $\c_a$ is the union of the closures of the topes $\tau$ contained in $\c_a$.

Topes were also called small chambers in \cite{BaldoniVergne}.

We might need several topes (small chambers) so that the union of their closures is $\c_a$.
Figure \ref{chambervshyper} shows an example for the chambers complex versus the topes complex for the cone associated to a root system of type $A_3.$

\begin{figure}[h]
\begin{center}
    \includegraphics[scale=0.5]{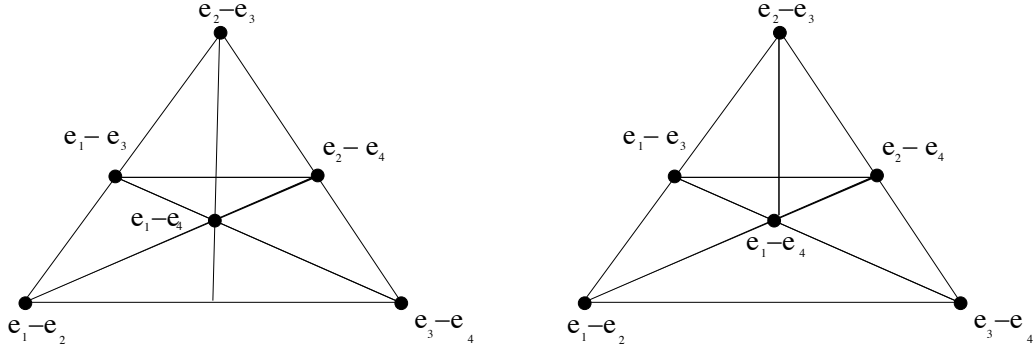}
\caption{8 topes (left) versus 7 chambers (right)}
\label{chambervshyper}
\end{center}
\end{figure}

Let $\xi\in i\t^*$.
Define the following function of $z\in \t_\C$.
 $$s_T^{\Psi}(\xi,z)=e^{\langle \xi,z\rangle }\frac{1}{\prod_{\psi\in \Psi} \langle\psi,z\rangle }.$$

The function $z\to s_T^{\Psi}(\xi,z)$
 has poles on the union of the hyperplanes $\langle\psi,z\rangle =0$.
An iterated residue of the function $z\to s_T^{\Psi}(\xi,z)$ depends  of $\xi$  through the Taylor series of the function
$e^{\langle \xi,z\rangle }$ at $z=0$.
So if we consider $\xi$ as a variable,  we obtain as a result of performing an iterated residue a polynomial function of $\xi$.

\begin{definition}

 Let $ \tau$  be a $\Psi$-tope.
Define
$$d_\tau^{\Psi}(\xi)=\sum_{\sigma\in \mathcal{OS}(\Psi,\tau)} Res_{{\sigma}}s_T^{\Psi}(\xi,z).$$

\end{definition}
Thus $d_\tau^{\Psi}$ is a polynomial function of $\xi$.

\begin{theorem}\label{volumeT}
Let $\tau \subset i\t^*$ be a $\Psi$-tope, and
 $\xi\in \overline\tau$.
 If $\tau$ is contained in $Cone(\Psi)$,
$$dh^{\CH}_T(\xi)=d_\tau^{\Psi}(\xi).$$
\end{theorem}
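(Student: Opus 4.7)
The plan is to identify $dh^{\CH}_T(\xi)$ with the volume of the polytope $\Pi_\Psi(\xi)$, express that volume as an inverse Laplace transform of a rational function with poles on the hyperplane arrangement $\CA(\Psi)$, and then show that this inverse Laplace transform is computed by the prescribed sum of iterated residues over Orlik--Solomon bases adapted to $\tau$.

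First I would recall that, as stated just before Theorem \ref{volumeT}, $dh^{\CH}_T(\xi)$ equals the volume of $\Pi_\Psi(\xi)=\{x\in\R_{\geq 0}^N:\sum x_a\psi_a=\xi\}$, and that this function satisfies the Laplace transform identity
\begin{equation*}
\int_{Cone(\Psi)} dh^{\CH}_T(\xi)\, e^{-\langle\xi,z\rangle}\, d\xi \;=\; \prod_{\psi\in\Psi}\frac{1}{\langle\psi,z\rangle},
\end{equation*}
valid for $z$ in a suitable open cone where the integral converges absolutely. Inverting this Laplace transform formally amounts to writing
\begin{equation*}
dh^{\CH}_T(\xi)=\frac{1}{(2\pi i)^r}\int_{\eta+i\t^*}\frac{e^{\langle\xi,z\rangle}}{\prod_{\psi\in\Psi}\langle\psi,z\rangle}\, dz,
\end{equation*}
which is exactly the integral whose evaluation we want to realize as a sum of iterated residues of $s_T^\Psi(\xi,z)$.

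Second, I would invoke (or re-derive from the philosophy described in Subsection \ref{tools}) the Jeffrey--Kirwan residue theorem together with the Brion--Szenes--Vergne decomposition of the JK residue into iterated one-variable residues along Orlik--Solomon bases. The key mechanism is: by a partial fraction type decomposition one can write
\begin{equation*}
\frac{1}{\prod_{\psi\in\Psi}\langle\psi,z\rangle}=\sum_{\sigma\in\mathcal{OS}(\Psi)} f_\sigma(z),
\end{equation*}
where each $f_\sigma$ has poles only on the $r$ hyperplanes determined by the basis $\sigma$. For such an $f_\sigma$ the inverse Laplace transform is an iterated residue in the ordered variables defined by $\sigma$, and that contribution is supported precisely on $Cone(\sigma)$. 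Thus when $\xi\in\tau$, the only surviving terms are those with $\tau\subset Cone(\sigma)$, i.e.\ $\sigma\in\mathcal{OS}(\Psi,\tau)$.

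Third, I would check that the formula extends from $\xi\in\tau$ to $\xi\in\overline\tau$ by continuity: each $Res_{\sigma}s_T^\Psi(\xi,z)$ is a polynomial function of $\xi$ (since the iterated residue only sees the Taylor series of $e^{\langle\xi,z\rangle}$), and the Duistermaat--Heckman function $dh^{\CH}_T$ is continuous on $C_T(\CH)$ and polynomial on each $\c_a$ by Theorem \ref{multDH}; the hypothesis $\tau\subset Cone(\Psi)$ guarantees that $\tau$ lies in a single cone of polynomiality $\c_a$, so equality on $\tau$ propagates to $\overline\tau$. The main obstacle is the second step: setting up cleanly the partial-fraction/OS decomposition and proving that the contribution of $f_\sigma$ to the inverse Laplace transform is supported exactly on $Cone(\sigma)$, with the correct signs so that the sum over $\mathcal{OS}(\Psi,\tau)$ recovers the characteristic function of $\tau$ times the volume polynomial. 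This is the combinatorial heart of the Jeffrey--Kirwan/Szenes--Vergne theory and is what makes the prescription ``sum over $\mathcal{OS}(\Psi,\tau)$'' the correct one rather than a larger or smaller index set.
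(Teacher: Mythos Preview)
The paper does not actually prove Theorem \ref{volumeT}; as announced in the abstract, the main general theorems involving Jeffrey--Kirwan residues are stated without proofs, and this one is quoted from the Szenes--Vergne theory (see the references \cite{SzeVer} and \cite{BBCV}). Your outline is the standard argument from that literature: Laplace transform of the volume function, partial-fraction decomposition of $\prod_\psi \langle\psi,z\rangle^{-1}$ indexed by Orlik--Solomon bases, identification of each iterated residue's support as $Cone(\sigma)$, and extension to $\overline\tau$ by polynomiality and continuity. So your approach is correct and is essentially the one the paper would point you to in \cite{SzeVer}; there is nothing to compare against in the paper itself.
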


\bigskip

Let $\mu\in \Lambda$, and for  $z\in \t_\C$,
define :
$$S_T^{\Psi}(\mu,z)=e^{\langle\mu,z\rangle }\frac{1}{\prod_{\psi\in \Psi} (1-e^{-\langle\psi,z\rangle })}.$$

Let $\Gamma$ be the dual lattice to $\Lambda$. Let $q:=q(\Psi)$ be the index of $\Psi$.
So
if $\sigma$ is a basis in $i\t^*$ consisting of elements of $\Psi$ then
$q \Lambda \subset \sum _{\psi \in \sigma}\Z \psi .$

If $\gamma\in \Gamma$, and we apply an iterated residue to the function
$z\to S_T^{\Psi}(\mu, z+\frac{2i\pi}{q} \gamma)$, we obtain a quasi-polynomial function of $\mu$.
Indeed, it depends of $\mu$ through the Taylor series at $z=0$ of
$e^{\langle\mu,z+\frac{2i\pi}{q} \gamma\rangle}=e^{\langle\mu,\frac{2i\pi}{q} \gamma\rangle} e^{\langle\mu,z\rangle}$,
and $e^{\langle\mu,\frac{2i\pi}{q} \gamma\rangle}$ is a periodic function of $\mu$ of period $q$.

\begin{definition}\label{defmultT}
Let $ \tau$  be a $\Psi$-tope.
Define
$$p_\tau^\Psi(\mu)=\sum_{\sigma\in \mathcal{OS}(\Psi,\tau)}
\sum_{\gamma\in \Gamma/q\Gamma} Res_\sigma S_T^{\Psi}(\mu, z+\frac{2i\pi}{q} \gamma).$$
\end{definition}

\begin{theorem}\label{multT} (Szenes-Vergne)
 Let $\tau \subset i\t^*$ be a $\Psi$-tope and  $\mu\in \overline \tau$.
 If  $\tau$ is contained in $Cone(\Psi)$, then for any $\mu\in \overline\tau\cap \Lambda$,
$$m_T^{\CH}(\mu)=p_\tau^\Psi(\mu).$$
\end{theorem}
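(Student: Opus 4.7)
The plan is to realize $m_T^{\CH}(\mu) = \CP_\Psi(\mu)$ as a multidimensional contour integral and then reorganize it as the sum of iterated residues in Definition \ref{defmultT}. First, I would start from the generating function identity
$$\prod_{\psi \in \Psi} \frac{1}{1 - e^{-\la \psi, z\ra}} = \sum_{\mu \in \Lambda} \CP_\Psi(\mu)\, e^{\la \mu, z\ra},$$
valid for $z \in \t_\C$ with $\la \psi, z\ra > 0$ for every $\psi \in \Psi$. Fourier inversion on the compact torus $i\t/(2\pi \Gamma)$, translated by a point $-\xi_0$ in the interior of the cone dual to $Cone(\Psi)$, then expresses $\CP_\Psi(\mu)$ as an integral of $e^{\la \mu, z \ra}/\prod_\psi(1 - e^{-\la \psi, z\ra})$ against a suitable volume form on this shifted torus.

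Next comes the Orlik--Solomon partial fraction step. Following the methods of \cite{BBCV}, the rational form $\prod_\psi \frac{1}{\la \psi, z\ra}$ admits a decomposition as a sum, over $\sigma \in \mathcal{OS}(\Psi)$, of elementary fractions with poles only on the hyperplanes defined by $\sigma$. Transferring this decomposition to the exponential denominators $1 - e^{-\la \psi, z\ra}$, whose pole set is the affine union of hyperplanes $\la \psi, z\ra \in 2\pi i \Z$, each OS-summand becomes a meromorphic function whose poles modulo periodicity form the finite orbit $\{\frac{2\pi i}{q}\gamma : \gamma \in \Gamma/q\Gamma\}$, this being forced by the inclusion $q\Lambda \subset \sum_{\psi \in \sigma} \Z \psi$ for every basis $\sigma$. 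The iterated residue $\Res_\sigma S_T^\Psi(\mu, z + \frac{2\pi i}{q}\gamma)$ then evaluates the contribution of one such pole to the original contour integral, via \eqref{defres}.

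The key step is the Jeffrey--Kirwan identification: when the Fourier inversion contour is deformed to extract these residues, only those $\sigma$ whose cone $Cone(\sigma)$ contains the tope $\tau$ actually contribute. This restricts the summation over OS bases to $\mathcal{OS}(\Psi, \tau)$, producing exactly the function $p_\tau^\Psi(\mu)$ of Definition \ref{defmultT}. The equality $\CP_\Psi(\mu) = p_\tau^\Psi(\mu)$ on $\overline\tau \cap \Lambda$ then follows because both sides agree as quasi-polynomial functions on an open subcone of $\tau$ (invoking Lemma \ref{lem:equaquasipoly}), with the quasi-polynomial character of the right side being immediate from the fact that $\Res_\sigma$ applied to $S_T^\Psi(\mu, z + \frac{2\pi i}{q}\gamma)$ is polynomial in $\mu$ with coefficient proportional to the periodic factor $e^{\la \mu, \frac{2\pi i}{q}\gamma\ra}$.

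The main obstacle is the Jeffrey--Kirwan step itself: showing that exactly the bases in $\mathcal{OS}(\Psi,\tau)$ survive, each with the correct sign and multiplicity, requires a careful inductive analysis on the Orlik--Solomon algebra and on the one-dimensional residue operations composed in \eqref{defres}. A secondary technical point is that individual OS bases have different minimal periods $d_\sigma$; choosing the common period $q = q(\Psi)$ and summing uniformly over $\Gamma/q\Gamma$ is the natural normalization, but one must verify that the value of $p_\tau^\Psi(\mu)$ does not depend on the particular common multiple chosen beyond $q(\Psi)$, which reduces to a covolume cancellation inside each residue sum.
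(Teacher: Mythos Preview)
The paper does not actually prove Theorem~\ref{multT}: it is stated as a result due to Szenes--Vergne \cite{SzeVer} and only accompanied by a Remark sketching the interpretation as a multidimensional residue theorem (via the change of variables $u_i=e^{-z_i}$ and the appearance of torsion points in $T_\C$), together with the worked example in Subsection~\ref{sub:verysimple}. So there is no ``paper's own proof'' to compare against; the theorem is quoted, not proved.

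That said, your outline is a faithful sketch of the Szenes--Vergne argument and is entirely consistent with the paper's Remark. The integral representation~\eqref{eq:dumbresidue}, the reduction to iterated one-variable residues, the selection of OS bases adapted to $\tau$, and the summation over $\Gamma/q\Gamma$ to capture the finite-order poles are exactly the ingredients the paper alludes to. Your honest flagging of the Jeffrey--Kirwan selection step as the nontrivial core is correct: this is where the real work in \cite{SzeVer} lies, and your proposal does not supply that argument---it only names it. So as a proof sketch your proposal is accurate and well-oriented, but it is not a self-contained proof any more than the paper's treatment is; both defer the substance to \cite{SzeVer}.
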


\begin{remark}\end{remark}
We denote by $u\to u^{\nu}$ the character of $T_\C$ associated to $\nu\in \Lambda$.
Consider the function $F(u)=\frac{1}{\prod_{\psi\in \Psi}(1-u^{\psi})}$.
It is clear that $m_T^{\CH}(\mu)$ is the Fourier coefficient of the function $F(u)$, expanded in the domain
$|u^{\psi}|<1$, for all $\psi\in \Psi$.
Thus $m_T^{\CH}(\mu)=\int_{|u|=r} u^{-\mu}F(u) \frac{du}{u}$.
Here $r$ is a small positive number.

Using the change of variables $u_i=e^{-z_i}$ ($z_i$ being the coordinates on $i\t$ associated to the basis $\sigma$),
an iterated residue
 $Res_{{\sigma}}s_T^{\Psi}(\mu,z)$
can be computed  as an iterated residue at $u=1\in T_\C$ of
the function
$\frac{u^{-\mu}}{\prod_{\psi\in \Psi}(1-u^{-\psi})}$.

Similarly the other terms  associated to $\gamma\in \Gamma/q\Gamma$ can be computed as iterated residues at some elements of finite order $q$ in $T_\C$.
Thus Szenes-Vergne theorem is a multi-dimensional residue theorem.
A detailed example will be given in \ref{sub:verysimple}.
$\Box$

\bigskip

\subsubsection{ Multiplicity and Duistermaat-Heckman function for $K$}
We now want to compute the functions $dh^{\CH}_K(\xi)$
and $m_K^{\CH}(\mu)$.

Let $\xi\in i\t^*$. Define the following function of $z\in \t_\C$:
 $$s_K^{\Psi}(\xi,z)=e^{\langle \xi,z\rangle }\frac{\prod_{\alpha\in \Delta_\k^+}\langle\alpha,z\rangle } {\prod_{\psi\in \Psi} \langle\psi,z\rangle }.$$

\begin{definition}\label{DHK}
Let $ \tau$  be a $\Psi$-tope.
Define
$$D_\tau^{\Psi}(\xi)=\sum_{\sigma\in \mathcal{OS}(\Psi,\tau)}  Res_{{\overrightarrow{\sigma}}} s_K^{\Psi}(\xi,z).$$
\end{definition}

\begin{theorem}\label{theo:DHK}
Let $\tau$  be a $\Psi$-tope
and let $\xi\in \overline \tau\cap i\t^*_{\geq 0}$.
If   $\tau\cap \t^*_{>0}$ is contained in the Kirwan cone $C_K(\CH)$, then
for any $\xi\in \overline\tau\cap \t^*_{\geq 0}$,
$$dh^{\CH}_K(\xi)=D_\tau^{\Psi}(\xi)$$
\end{theorem}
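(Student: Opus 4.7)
The strategy is to leverage Theorem~\ref{theo:reductoT}, which expresses the $K$-Duistermaat--Heckman function as the result of applying the differential operator $\prod_{\alpha\in\Delta_\k^+}\partial_{-\alpha}$ to the $T$-version $d_a^{\Psi}$, and then to substitute for $d_a^{\Psi}$ the explicit iterated residue formula provided by Theorem~\ref{volumeT}.

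First, given $\xi \in \overline{\tau} \cap i\t^*_{\geq 0}$ with $\tau \cap \t^*_{>0} \subset C_K(\CH)$, I would locate the polynomiality chamber $\c_a$ containing $\tau$. An essential preliminary check is that the hypothesis of Theorem~\ref{theo:reductoT} is met, namely $\c_a \cap i\t^*_{\geq 0} \subset C_K(\CH)$; this holds because, under the standing solidity assumption, $C_K(\CH)$ is a union of such polynomiality chambers, so once $\tau$ meets $C_K(\CH) \cap i\t^*_{>0}$ the entire chamber $\c_a \cap i\t^*_{\geq 0}$ sits in the Kirwan cone. Combining Theorem~\ref{theo:reductoT} and Theorem~\ref{volumeT} then yields
$$dh^{\CH}_K(\xi) \;=\; \Bigl(\prod_{\alpha\in\Delta_\k^+}\partial_{-\alpha}\Bigr) d_a^{\Psi}(\xi) \;=\; \Bigl(\prod_{\alpha\in\Delta_\k^+}\partial_{-\alpha}\Bigr) \sum_{\sigma\in\mathcal{OS}(\Psi,\tau)} \Res_\sigma\, s_T^{\Psi}(\xi,z),$$
valid on $\c_a \cap i\t^*_{\geq 0}$, and in particular on $\overline{\tau}\cap i\t^*_{\geq 0}$.

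The core computation is then to commute the differential operator past the iterated residue. Since $\Res_\sigma$ is a $\C$-linear functional on the space of meromorphic functions of $z$ and since $\xi$ enters $s_T^{\Psi}(\xi,z)$ only through the factor $e^{\langle \xi,z\rangle}$, the identity $\partial_{-\alpha} e^{\langle \xi,z\rangle} = -\langle \alpha,z\rangle\, e^{\langle \xi,z\rangle}$ gives
$$dh_K^{\CH}(\xi) \;=\; \sum_{\sigma\in\mathcal{OS}(\Psi,\tau)} \Res_\sigma\, \Bigl(\prod_{\alpha\in\Delta_\k^+}(-\langle\alpha,z\rangle)\Bigr) s_T^{\Psi}(\xi,z).$$
Up to the global sign $(-1)^{|\Delta_\k^+|}$, the bracketed product is exactly the numerator that promotes $s_T^{\Psi}$ to $s_K^{\Psi}$; this sign is absorbed into the ordering conventions for the ordered basis $\sigma$ defining the iterated residue (or equivalently into the overall normalization used in Definition~\ref{DHK}), producing $dh_K^{\CH}(\xi) = D_\tau^{\Psi}(\xi)$. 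Extension from $\tau$ to its closure $\overline{\tau}$ is automatic since both sides of the identity are polynomial functions of $\xi$ on $\c_a$.

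The main obstacle I expect is the careful bookkeeping of signs and domains: confirming that the factor $(-1)^{|\Delta_\k^+|}$ really is absorbed by the residue convention of Subsection~\ref{tools} (rather than representing a genuine discrepancy), and verifying that every tope $\tau$ with $\tau\cap\t^*_{>0}\subset C_K(\CH)$ lies inside a polynomiality chamber $\c_a$ whose entire intersection with the closed positive Weyl chamber sits in the Kirwan cone. Both points are structural consequences of the decomposition of $C_K(\CH)$ into polynomiality chambers, but they must be stated explicitly because Theorem~\ref{theo:reductoT} is phrased in terms of $\c_a$ while the present theorem is phrased in terms of topes $\tau$.
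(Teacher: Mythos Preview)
Your approach is essentially the one the paper takes. The paper does not give a separate proof of Theorem~\ref{theo:DHK}; however, it proves the quantized analogue, Theorem~\ref{theo:multK}, by exactly the scheme you describe: invoke Theorem~\ref{theo:reductoT} to pass from the $T$-quantity to the $K$-quantity, substitute the iterated residue expression from Theorem~\ref{multT} (resp.\ Theorem~\ref{volumeT}), and then collapse the Weyl-group shift (there via the denominator formula $\sum_w \epsilon(w)e^{\rho_\k-w\rho_\k}=\prod_{\alpha>0}(1-e^{\alpha})$, here via differentiation of $e^{\langle\xi,z\rangle}$). Your check that the tope $\tau$ sits inside a polynomiality chamber $\c_a$ with $\c_a\cap i\t^*_{\geq 0}\subset C_K(\CH)$ is exactly the bridging step the paper needs to go from the $\c_a$-based statement of Theorem~\ref{theo:reductoT} to the $\tau$-based statement of Theorem~\ref{theo:DHK}, and your extension to $\overline{\tau}$ by polynomiality is correct.

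One point deserves a sharper treatment: the sign $(-1)^{|\Delta_\k^+|}$ you obtain from $\partial_{-\alpha}e^{\langle\xi,z\rangle}=-\langle\alpha,z\rangle e^{\langle\xi,z\rangle}$ is \emph{not} absorbed by the ordering conventions of the iterated residue, since $\Res_{\overrightarrow{\sigma}}$ does not depend on $|\Delta_\k^+|$ at all, nor is there a hidden normalization in Definition~\ref{DHK}. What actually happens is that the paper's stated formulas carry a harmless sign inconsistency between Theorem~\ref{theo:reductoT} and the definition of $s_K^\Psi$ (this is a survey, and these intermediate results are quoted without proof). In the quantized version the paper proves, the analogous factor is $\prod_{\alpha>0}(1-e^{\langle\alpha,z\rangle})$, whose leading term is precisely $\prod_{\alpha>0}(-\langle\alpha,z\rangle)$; so the ``correct'' numerator compatible with Theorem~\ref{theo:reductoT} as stated would be $\prod_{\alpha>0}(-\langle\alpha,z\rangle)$ rather than $\prod_{\alpha>0}\langle\alpha,z\rangle$. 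You should say this explicitly rather than attribute the sign to a residue convention that does not in fact carry it.
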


\begin{remark} \end{remark}

The function $D_\tau^{\Psi}$ is easier to compute that  $d_{\tau}^{\Psi}$
 as the orders of poles in $z$ of the function $s_K^{\Psi}(\xi,z)$ are smaller that those of $s_T^{\Psi}(\xi,z)$.

$\Box$
\bigskip

In particular, if $\xi\in i\t^*_{>0}$, and is $\Psi$-regular, we can check if
$\xi$ is in $C_K(\CH)$, by computing $D_\tau^{\Psi}(\xi)$ where $\tau$ is the tope containing $\xi.$

We can now state the result to compute $m_K^{\CH}(\lambda).$

 Let $\lambda\in \Lambda_{\geq 0}$,
 and for $z\in \t_\C$ define:$$S_K^{\Psi}(\lambda,z)=e^{\langle\lambda,z\rangle }\frac{\prod_{\alpha\in \Delta_\k^+}(1-e^{\langle\alpha,z\rangle })}{\prod_{\psi\in \Psi} (1-e^{-\langle\psi,z\rangle })}.$$

\begin{definition}
Let $\tau$  be a tope.
Define
$$P_{\tau}(\lambda)=
\sum_{\sigma\in \mathcal{OS}(\Psi,\tau)}
\sum_{\gamma\in \Gamma/q\Gamma} Res_{{\overrightarrow{\sigma}}} S_K^{\Psi}(\lambda, z+\frac{2i\pi}{q}  \gamma).$$

\end{definition}

\begin{theorem}\label{theo:multK}
Let $\tau$ be a tope and let $\lambda \in \Lambda_{\geq 0}$.
Then
\begin{enumerate}
\item
If $\lambda \in \tau$, then
$m_K^{\CH}(\lambda)=P_{\tau}(\lambda).$

\item  if $\lambda \in \overline{\tau}$
and $\tau \cap i\t^*_{\geq 0}\subset C_K(\CH)$ 
then
$m_K^{\CH}(\lambda)=P_{\tau}(\lambda)$.
\end{enumerate}
\end{theorem}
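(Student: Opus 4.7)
The plan is to reduce the statement to the Szenes--Vergne formula for the torus partition function (Theorem~\ref{multT}) and the alternating Weyl sum identity~(\ref{eq:multKmultT}), using the Weyl denominator formula as the bridge. Evaluating (\ref{eq:denom}) at $z\in\t_\C$ and substituting into the definition of $S_K^{\Psi}$ yields
\[
S_K^{\Psi}(\lambda,z)=\sum_{w\in\CW_\k}\epsilon(w)\,S_T^{\Psi}(\lambda+\rho_\k-w\rho_\k,z),
\]
and because the iterated residue and the sum over $\gamma\in\Gamma/q\Gamma$ are linear in the integrand, one reads off from Definition~\ref{defmultT} the ``Weyl sum'' expression
\[
P_\tau(\lambda)=\sum_{w\in\CW_\k}\epsilon(w)\,p_\tau^{\Psi}(\lambda+\rho_\k-w\rho_\k).
\]

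I would then verify the identity on an open subcone deep inside $\tau$. If $\tau\not\subset Cone(\Psi)=C_T(\CH)$, no ordered basis $\sigma\subset\Psi$ satisfies $\tau\subset Cone(\sigma)$, hence $\mathcal{OS}(\Psi,\tau)=\emptyset$ and $P_\tau\equiv 0$; at the same time $C_K(\CH)\subset Cone(\Psi)$ forces $m_K^{\CH}=0$ on $\tau\cap\Lambda_{\geq 0}$ by Mumford's theorem, so we may assume $\tau\subset Cone(\Psi)$. Set $Q=\max_{w\in\CW_\k}\|\rho_\k-w\rho_\k\|$ and let $\tau_Q\subset\tau$ denote the points at distance greater than $Q$ from $\partial\tau$. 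For $\lambda\in\tau_Q$ every translate $\lambda+\rho_\k-w\rho_\k$ lies in $\overline\tau$, so Theorem~\ref{multT} gives $p_\tau^{\Psi}(\lambda+\rho_\k-w\rho_\k)=m_T^{\CH}(\lambda+\rho_\k-w\rho_\k)$. Combining with (\ref{eq:multKmultT}),
\[
P_\tau(\lambda)=\sum_{w\in\CW_\k}\epsilon(w)\,m_T^{\CH}(\lambda+\rho_\k-w\rho_\k)=m_K^{\CH}(\lambda),\qquad \lambda\in\tau_Q\cap\Lambda_{\geq 0}.
\]
Because $\tau$ is an open polyhedral cone, $\tau_Q$ contains a translate $s+\tau$.

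Next I would propagate the identity from $\tau_Q$ to all of $\tau$. Each facet of $C_K(\CH)$ is cut out by a $\Psi$-admissible hyperplane (Mumford--Kirwan), so the tope $\tau$ lies entirely inside, or entirely outside, $C_K(\CH)$. Inside $Cone(\Psi)$ topes refine the chambers of polynomiality $\c_a$ of $\CP_\Psi$, and Theorem~\ref{theo:reductoT} then presents $m_K^{\CH}$ by a single quasi-polynomial on $\tau\cap\Lambda_{\geq 0}$ (the zero quasi-polynomial in the ``outside'' case). Since $P_\tau$ is quasi-polynomial on $\Lambda$ and agrees with $m_K^{\CH}$ on $(s+\tau)\cap\Lambda_{\geq 0}$, Lemma~\ref{lem:equaquasipoly} promotes this to equality on all of $\tau\cap\Lambda_{\geq 0}$, proving~(1). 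Part~(2) follows because under the hypothesis $\tau\cap i\t^*_{\geq 0}\subset C_K(\CH)$ the quasi-polynomial $p_{K,a}^{\CH}$ of Theorem~\ref{theo:multH} extends $m_K^{\CH}$ to $\overline\tau\cap i\t^*_{\geq 0}\cap\Lambda$, so the identity of the two quasi-polynomials established on $\tau$ automatically persists on the lattice points of the closure.

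The subtlest point, which I expect to be the main obstacle, is the assertion that each $\Psi$-tope is contained in a single cone of polynomiality of $m_K^{\CH}$, together with the related claim that the facets of $C_K(\CH)$ are $\Psi$-admissible. Both facts are consequences of the Weyl-shifted form of Theorem~\ref{theo:reductoT} and of the GIT description of $C_K(\CH)$, but one must check that the shifts $\rho_\k-w\rho_\k$ do not introduce new walls beyond the $\Psi$-admissible ones; this is precisely the hinge that makes Lemma~\ref{lem:equaquasipoly} applicable and that separates the neat case~(1) from the boundary-sensitive case~(2).
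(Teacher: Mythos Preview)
Your proposal is correct and follows essentially the same route as the paper: both arguments use the Weyl denominator formula to identify $P_\tau(\lambda)$ with the alternating sum $\sum_{w}\epsilon(w)\,p_\tau^{\Psi}(\lambda+\rho_\k-w\rho_\k)$, invoke the Szenes--Vergne formula (Theorem~\ref{multT}) on points deep inside $\tau$, and then extend by the quasi-polynomial rigidity of Lemma~\ref{lem:equaquasipoly}. The only organizational difference is that the paper cites Theorem~\ref{theo:reductoT} (whose proof already contains the $\tau_Q$ argument) and then unwinds the denominator identity, whereas you reprove the $\tau_Q$ step inline and apply the denominator identity first; your version is slightly more explicit about the cases $\tau\not\subset Cone(\Psi)$ and $\tau\cap i\t^*_{\geq 0}\not\subset C_K(\CH)$, which the paper leaves implicit.
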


\begin{proof}
Consider the quasi-polynomial function, def.\ref{defmultT}:
 $$p_\tau^{\Psi}(\mu)=\sum_{\sigma\in \mathcal{OS}(\Psi,\tau)}
\sum_{\gamma\in \Gamma/q\Gamma} Res_\sigma S_T^{\Psi}(\mu, z+\frac{2i\pi}{q} \gamma).$$
The function $\CP_\Psi$ is given on $\tau$
by the quasi polynomial formula, Theorem \ref{multT}
$$\CP_\Psi(\mu)=p_\tau^{\Psi}(\mu).$$
We now use Theorem \ref{theo:reductoT}  which asserts that
$m_K^{\CH}$ is given on the closure of $\tau$ by
$\sum_w\epsilon(w) p_\tau^{\Psi}(\mu+\rho_\k-w\rho_k).$
This is
$$\sum_{w \in \CW_\k} \epsilon(w )\sum_{\sigma\in \mathcal{OS}(\Psi,\tau)}
\sum_{\gamma\in \Gamma/q\Gamma}  \Res_{{\overrightarrow{\sigma}}} S_T^{\Psi}(\mu -w(\rho_\k)+\rho_\k, z+\frac{2i\pi}{q} \gamma)=$$
$$\sum_{\sigma\in \mathcal{OS}(\Psi,\tau)}
\sum_{\gamma\in \Gamma/q\Gamma} \Res_{{\overrightarrow{\sigma}}}\sum_w\epsilon(w) e^{\la-w(\rho_\k)+\rho_\k,z+\frac{2i\pi}{q} \gamma\ra }\frac{e^{\la \mu,z+\frac{2i\pi}{q} \gamma\ra }}{\prod_{\psi \in \Psi}(1-e^{\la -\psi,z+\frac{2i\pi}{q} \gamma\ra })}=$$
$$\sum_{\sigma\in \mathcal{OS}(\Psi,\tau)}\sum_{\gamma\in \Gamma/q\Gamma} Res_{{\overrightarrow{\sigma}}} S_K^{\Psi}(\mu, z+\frac{2i\pi}{q}  \gamma).$$
The last equality follows from the denominator formula $\sum_w\epsilon(w) e^{\rho_\k-w\rho_\k}=\prod_{\alpha>0} (1-e^{\alpha})$.
Thus $m_K^{\CH}(\mu)=P_\tau^{\Psi}(\mu)$ when $\mu$ is in the closure of $\tau$.
\end{proof}

\subsubsection{$OS$ bases}\label{computeOS}
Let $v$ be a $\Psi$-regular vector. Let $n=\dim \t$.
We recall here the algorithmic method based on the method of Maximal Nested Sets (MNS) of De Concini-Procesi, \cite{DCP}, and developed by Baldoni-Beck-Cochet-Vergne in \cite{BBCV} to compute $\mathcal{OS}(\Psi,\tau(v))$.
 Recall that $\mathcal{OS}(\Psi,\tau(v))$ depends only of the tope $\tau(v)$ where $v$ belongs.
We first need to compute the set $\CA$ of admissible
hyperplanes generated by elements of $\Psi$ (we do not have a good method to perform this step).
For each hyperplane, we choose an element $H\in \t$, so that our hyperplane is $H^{\perp}$.

Anyway, assume that we have determined this set $\CA$ of admissible hyperplanes.
Then we order $\Psi$.
We compute recursively a set of OS basis for  $\Psi$  by the following method.
\begin{itemize}
\item We first choose $\psi_0$ the lowest element of $\Psi$, and an hyperplane $H_1$ where $\psi_0$ does not belong:
$\la H_1,\psi_0\ra \neq 0$. Let    $\Psi_1=\Psi\cap H_1^{\perp}$ and $\CA_1=\{H^{\perp}\in \CA , \la H,\psi_0\ra =0\}$, that is $\CA_1$ is the set of hyperplanes in $\CA$ that contain $\psi_0.$
\item We choose $\psi_1$ the lowest element in $\Psi_1$ and $H_2$ in $\CA_1$ , such that $\psi_1$ does not belong to $H_2$,
and such that  $H_1^{\perp}\cap H_2^{\perp}$ is spanned by
$\Psi\cap H_1^{\perp}\cap H_2^{\perp}$.

 \noindent In  practice, we verify  this condition only at the end, but we verify
 that  the set  $\Psi \cap H_1^{\perp}\cap H_2^{\perp}$ has at least $n-2$ elements.

 \noindent We can now define $\Psi_2= H_1^{\perp}\cap H_2^{\perp}\cap \Psi$ and $\CA_2=\{H^{\perp}\in \CA _1, \la H,\psi_1\ra =0\}$, that is $\CA_2$ is the set of hyperplanes in $\CA$ that contain $\psi_0, \psi_1.$
 \item The inductive step is the following:

 \noindent suppose we have constructed  a list  $[\alpha_0,\alpha_1,\ldots, \alpha_s] \subset \Psi$ and a list $[H_1,H_2,\ldots, H_{s+1}]$ of hyperplanes with the property that
 \begin{itemize}
 \item $\alpha_i$ is the lowest element of $\Psi\cap H_1^{\perp}\cap \cdots \cap H_i^{\perp}$.
 \item $\alpha_j \notin H_{j+1}$ and $H^{\perp}_{j+1} \in \CA_{j}$ where $\CA_{j}=\{H^{\perp} \in \CA,
 \la H,\psi_t\ra =0, t=0\cdots j-1\}.$

 \noindent Thus  $[\alpha_0,\alpha_1,\ldots, \alpha_{j-1}]\in H^{\perp}_{j+1}$ and $\alpha_j \notin H_{j+1}.$
 \item $\Psi \cap H_1^{\perp}\cap H_2^{\perp}\cap\cdots\cap H_s^{\perp} \cap H_{s+1}^{\perp}$ has $\geq n-s-1$ elements.
\end{itemize}
\item The next step is as follows:
\begin{itemize}
\item  we choose $\alpha_{s+1}$ the lowest element in $\Psi \cap H_1^{\perp}\cap H_2^{\perp}\cap\cdots\cap H_s^{\perp}\cap H_{s+1}^{\perp}$
\item We choose an hyperplane $H_{s+2}$   containing $[\alpha_0,\alpha_1,\ldots, \alpha_{s}]$, i.e  $H_{s+2} \in \CA_{s+1}, $ but not $\alpha_{s+1}$ and such that
$\Psi \cap H_1^{\perp}\cap H_2^{\perp}\cap\cdots\cap H_s^{\perp} \cap H_{s+2}^{\perp}$ has $\geq n-s-2$ elements.
\end{itemize}
\item we then continue with $[H_1,H_2,\ldots, H_{s+1},H_{s+2}]$
and $[\alpha_0,\alpha_1,\ldots, \alpha_{s+1},\alpha_{s+2}]$
\item At the end, we obtain $n$ elements
$[\alpha_0,\alpha_1,\ldots, \alpha_{n-1}].$

It may occur that we cannot go to the end. This means that at this step, the space
 $H_1^{\perp}\cap H_2^{\perp}\cap \cdots \cap H_k^{\perp}$ was not generated by its intersection with $\Psi$.
However, if we arrive to the end, we obtain a basis, and by \cite{DCP} we obtain a set of OS basis for $\Psi$.
\end{itemize}

We can refine this method to obtain directly the set of OS adapted basis for  a regular vector $v$.
The algorithm we just explained is modified easily.
At each step when we look for  the hyperplane $H_s$  containing $[\alpha_0,\alpha_1,\ldots, \alpha_{s-2}]$, but not $\alpha_{s-1}$,
 we also impose on $H_s$ the condition that
the vectors  $v$ and $\alpha_{s-1}$ lie on the same side of the hyperplane $H_s$.
In this way, we  obtain $\mathcal{OS}(\Psi,\tau(v))$. If $v$ is not in the cone $C(\Psi)$,
 then the algorithm  returns the empty set for $\mathcal{OS}(\Psi,\tau(v)).$

Remark that if we know the equations of the cone $C(\Psi)$, then we could first check if $v$ is in the cone,  before computing $\mathcal{OS}(\Psi,\tau(v))$ to shorten the procedure.
However, we do not use this preliminary step.

\subsubsection{Dilated coefficients, Hilbert series}

Once we know   $\mathcal{OS}(\Psi,\tau)$ the calculation  is not more difficult to  do with symbolic variable $\mu$, and we obtain the periodic polynomial which coincides with $m_K^{\CH}(\mu)$ on $\tau\cap i\t^*_{\geq 0}$
(and the closure if $\tau\cap i\t^*_{\geq 0}$ is contained in $C_K(\CH)$) .

In particular, to compute the function $m_K^{\CH}(k \mu)$ on the line $\N \mu$  as a periodic polynomial function
of $k$ is not more difficult than to compute the numerical value $m_K^{\CH}(\mu).$

A particular interesting example is the computation of Hilbert series.
Assume that $\k=\z \oplus [\k,\k]$, where the center $\z=\R J$  of $\k$ acts by the homothety. Consider $\chi\in \Lambda$ such that
$\chi(iJ)=1$ and $\chi=0$ on $i(\t\cap [\k,\k])$.
Then
$m_K^{\CH}(k\chi)=\dim [S^k(\CH)]^{[K,K]}$.
So
the series $R(t)=\sum_{k=0}^{\infty} m_K^{\CH}(k\chi) t^k$ is the Hilbert series of the ring of invariants polynomials under the action of
$[K_\C,K_\C]$. It is of the form
$\frac{P(t)}{\prod_{j=1}^N (1-t^{a_j})}.$

 The degree of the quasi-polynomial function $k\to m^K(\CH)(k\chi)$ as well as its set of periods gives some information on
 the number of factors and the coefficients $a_j$ in this Hilbert series.

\subsubsection{ Advantages and Difficulties of the method}\label{difficult}

Since one of the objective of this paper is to describe an efficient algorithm to compute multiplicities, at least for low dimension, let's look at the weak and good  points in implementing the above formulae.

$\bullet$ The computation of the set of admissible hyperplanes for a system $\Psi$.
For example, if we consider the space of $6$ qubits, a brute force computation (taking any subset with $6$ elements of $2^6$ elements
 would lead to $7624512$ computations).
We do not know an efficient algorithm for computing the set $\CA(\Psi)$. Moreover, our system $\Psi$ has symmetries coming from the Weyl group action. So it would be good to have an algorithm computing $\CA(\Psi)$ up to Weyl group action.
When $\Psi$ is a root system of rank $r$, this set, up to Weyl group action, is just the set $\{h_i^{\perp},i=1,\ldots,r\}$ where $h_i$
are the fundamental coweights. In the Kronecker examples, we do not know the set
 $\CA(\Psi)$, although the system $\Psi$ is quite simple, see Formula \ref{restricted} inside Example  \ref{exarestricted}.

$\bullet$ The residues calculation for the function $S_K^{\Psi}(\mu,z)$. We do this by power series expansion, and
it leads to multiply polynomials of larger and larger degree.

$\bullet$ The computation of the set $\mathcal{OS}(\Psi,\tau)$.
Once this set is computed, residues contributions are independent of each other. So an advantage of the iterated residue method is that each individual residue computation is easy and does not use much memory.

$\bullet$ The index $q$ of the set $\Psi$. The fact is that for arbitrary systems $\Psi$ with $N$-vectors,
this index can be  large. So Szenes-Vergne formula does not provide a polynomial time algorithm (the dimension of $\CH$ being fixed).

For example, in the case of the knapsack, (Example  \ref{knapsack}), when the $A_i$ are relatively prime, the index $q$ is $A_1\cdots A_n$.
Thus, if $q$ is large,  the function
 $$\sum_{\gamma\in \Gamma/q\Gamma}  S_T^{\Psi}(\mu, z+\frac{2i\pi}{q} \gamma)$$
should be computed in polynomial time  using Barvinok determination of generating functions of cones \cite{Bar}.
This is the method we used in \cite{BBDDKV} to give an efficient computation when $n$ is fixed.
However, for our computation of Kronecker examples,   we just  used the summation over $L^*/qL^*$, as $q$ was relatively small.

\bigskip

Let us explain at this point the method followed by Christand-Doran-Walter \cite{C-D-W}. Their   method
(of polynomial complexity, when the dimension of $\CH$ is fixed)
 computes the numeric multiplicity by the following method.
As we said, the multiplicity function $m_T^{\CH}(\lambda)$ for $T$ is the number of points in the polytope $\Pi_\Psi(\lambda)$.
Thus  \cite{C-D-W} uses the "Barvinok algorithm", as implemented in \cite{Verdo}, to compute
$m_T^{\CH}(\lambda)$.  This computation would  also be possible to do using  the Latte  software package \cite{allourfriends}.
So the above difficulty of the possibly very large index $q$ is resolved (in polynomial time) by the power of Barvinok signed decompositions.
Then they use the relation between $m_K^{\CH}$ and $m_T^{\CH}$ to compute the multiplicity under $K$.
Remark that this last computation can be done on parallel computing. However the number of elements $w$ becomes very large.

 We close this  subsection with some questions.

{\bf Questions}

\noindent Here are the problems that we would like to  have some partial answers in interesting examples:
\begin{itemize}\item Can we understand the image of the moment map, and describe the Kirwan polyhedron by inequalities?
\item  Can we compute the Duistermaat-Heckman measure?
\item  Can we compute the dilated multiplicity $k\to m_K^\CH(k\lambda)=\sum_i^R c_i(k) k^i$,
or at least  say something on the periodicity of the coefficients and the degree  of the function $k\to m_K^\CH(k\lambda)$ ?
In particular when $\lambda$ is a one dimensional representation of $K$, or other interesting
$\lambda$.

\end{itemize}

\subsubsection{A very simple example}\label{sub:verysimple}

Return to Example \ref{ex:Walter}.
We close this part in computing in two ways the multiplicity $m_{T_K}^{\CH}(\lambda)$ in this very simple example.

$\bullet$ The straightforward method of computing the expansion of a Fourier series.
The one dimensional residue formula is used repeatedly  to replace the large poles at $u=0$ in poles on $|u|=1$.
 The branchings appearing  in this straightforward
becomes  soon quite complicated, if the rank of $K$ as well as the dimension of $\CH$ increases.
This is the method followed by \cite{Xin}, with an efficient way to keep track of the branchings.

$\bullet$ The Jeffrey-Kirwan residue method. It gives a (possibly large) number of  residues
 at $z=0$,  or equivalently, in
exponential coordinates $u=\exp z$, at $u=1$, or some finite order elements in $T$.
The  computation for each iterated residue  are independent of each other.

$\bullet$ {\bf The straightforward method.}

We consider the torus $T$ of $U(2)$ parameterized as $\{(u_1,u_2); |u_1|=1; |u_2|=1\}$
and the action of $T$ on $\CH$ with weights $[2,0],[1,1],[0,2]$.
The character of the representation of $T$ in $Sym(\CH)$ is obtained by computing the Fourier expansion of
$$F(u_1,u_2)=\frac{1}{(1-u_1^2)(1-u_1u_2)(1-u_2^2)}$$ for $u_1,u_2$ of modulus strictly less then than $1$.
Let $\lambda=\lambda_1\epsilon_1+\lambda_2\epsilon_2$, with $\lambda_1$, $\lambda_2$ two non negative integers.
Thus, taking for example $r_1=r_2=\frac{1}{2}$, we obtain
the straightforward formula
$$m_T^{\CH}(\lambda)=\int\int_{|u_1|=r_1,|u_2|=r_2}\frac{u_1^{-\lambda_1} u_2^{-\lambda_2}}{(1-u_1^2)(1-u_1u_2)(1-u_2^2)}\frac{du_1}{2i\pi u_1} \frac{du_2}{2i\pi u_2}.$$

Let us fix $u_1$, and integrate on $|u_2|=r_2$.
As $\lambda_2\geq 0$, the poles of the integrand  on $|u_2|\leq r_2$  is the pole at $u_2=0$.
The other poles are $u_2=1, u_1=-1, u_2=1/u_1$, and there are no poles at $\infty$.
 Using the one dimensional residue theorem, we obtain
$m_T^{\CH}(\lambda)=A+B+C$
with $$A=-\frac{1}{2}\int_{|u_1|=r_1}\frac{ u_1^{-\lambda_1}}{(1-u_1)(1-u_1^2)}\frac{du_1}{2i\pi u_1},$$
$$B=-\frac{(-1)^{\lambda_2}}{2}\int_{|u_1|=r_1}\frac{ u_1^{-\lambda_1}}{(1-u_1)(1-u_1^2)} \frac{du_1}{2i\pi u_1},$$
$$C=\int_{|u_1|=r_1}\frac{u_1^{\lambda_2-\lambda_1+2}}{(1-u_1^2)^2} \frac{du_1}{2i\pi u_1}.$$

We now integrate in $u_1$. However, at this step, the computation is different if $\lambda_2\geq \lambda_1$ or not.
Indeed in the case where $\lambda_2\geq \lambda_1$, the integral $C$ will be equal to $0$, as the integrand has no poles inside
$|u_1|\leq \frac{1}{2}$.
So let us assume this is the case.
We then pursue in computing similarly the integrals $A$,$B$ by the one dimensional residue formula, and we obtain
by summing the residues at $u_1=1, u_1=-1$ of the integrands expressions in $A$,$B$, a sum of $4$ terms adding up to
$$\frac{1}{4}(1+(-1)^{\lambda_1+\lambda_2})\lambda_2+ (1+(-1)^{\lambda_1+\lambda_2})\frac{3}{8}+\frac{1}{8}((-1)^{\lambda_1}+(-1)^{\lambda_2}),$$
an expression vanishing if $\lambda_1+\lambda_2$ is odd, as it should, and coinciding with
the given expression
$$m_{T_K}^{\CH}(\lambda)=\frac{1}{2}\lambda_1+\frac{3}{4}+(-1)^{\lambda_1}\frac{1}{4}.$$
on the cone of polynomiality $\c_2$

$\bullet$
Let us now employ  Theorem \ref{multT},  using Jeffrey Kirwan residues.

We consider  the same case
$\lambda_2\geq \lambda_1$.

Consider the tope $\tau_2=\{\lambda; \lambda_1>\lambda_2>0\}$, the interior of $\c_2$.
If we compute $OS(\Psi,\tau_2)$ for the order   $\Psi=[[2,0],[1,1],[0,2]]$,  there is only one adapted basis
$\sigma=[[2,0],[0,2]].$
The index $q$ is equal to $2$ (in the lattice $\Lambda_K$ generated by $\Psi$).
The function $S_T(\lambda,z)$ is equal to
$$\frac{e^{\lambda_1 z_1+\lambda_2 z_2}}{(1-e^{-2z_1})(1-e^{-2z_2})(1-e^{-z_1-z_2})}.$$

A representative of $\Gamma_K/2\Gamma_K$ is $G=[-1/2,0]$, and we obtain
$$\sum_{\gamma\in \Gamma_K/2\Gamma_K}S_T(\lambda,z+i\pi \gamma)=S_1+S_2$$ with
$$S_1=\frac{e^{\lambda_1 z_1+\lambda_2 z_2}}{(1-e^{-2z_1})(1-e^{-2z_2})(1-e^{-z_1-z_2})},$$
$$S_2=(-1)^{\lambda_1}\frac{e^{\lambda_1 z_1+\lambda_2 z_2}}{(1-e^{-2z_1})(1-e^{-2z_2})(1+e^{-z_1-z_2})}.$$

The  iterated residue computation  $Res_{z_1=0} Res_{z_2=0}S_i$ is straightforward and we obtain
$\frac{1}{2}\lambda_1+\frac{3}{4}$ from $S_1$, and $\frac{1}{4}(-1)^{\lambda_1}$ from the second term $S_2$.

It is also worthwhile to remark in this example that
the iterated residue computation depends of the order. The reverse order
$Res_{z_2=0} Res_{z_1=0}S_i$ would have given the formula for the tope
 $\tau_1=\{(\lambda_1,\lambda_2), \lambda_1>\lambda_2>0\}.$

\section{Branching Rules}\label{Branching Rules}
In this section, we will write  a quasi polynomial formula for  the branching coefficients.   This will allow, in combination with the Cauchy formula, to express the Kronecker coefficients in a different way that has at least the advantage of reducing the number of variables in the setting, Section \ref{section:kro}.
\subsection{The Branching Cone}
Consider a pair $K\subset G$ of two compact connected Lie groups, with Lie algebras $\k,\g$ respectively.
Let $\pi:\g^*\to \k^*$ be the projection.

We consider the following action of $G\times K$ on $G$ : $(g,k)\cdot a= g a k^{-1}$.
The manifold  $T^*G$ (the cotangent bundle of  $G$) is a $G\times K$ Hamiltonian manifold.
The geometric quantization of $T^*G$ "is" the space $L^2(G)$. This statement can be justified, but we will not
do it  here.

Let us define $V=R(G)$ to be the subspace of $C^{\infty}(G)$ generated by the coefficients
$\langle g u_1,u_2\rangle$ of finite dimensional representations of $G$
(by Peter-Weyl theorem, the space $L^2(G)$ is the Hilbert completion of $V$).

Assume $G$,$K$ connected, and let $T_G$, $T_K$ be  maximal tori of $G$,$K$.  We may assume, and we do so, that $T_K\subset T_G$.
We choose Cartan subalgebras $\t_\g$, $\t_\k$, Weyl chambers  $i\t^*_{\g,\geq 0}$, $i\t^*_{\k,\geq 0}$,
and  we denote the corresponding cones of dominant weights by $\Lambda_{G,\geq 0}$, $\Lambda_{K,\geq 0}$.
We denote by $i\t^*_{\g,\k,\geq 0}$ the sum $i\t^*_{\g,\geq 0}\oplus i\t^*_{\k,\geq 0}$ of the closed positive Weyl chambers relatives to $G,K$, and by  $i\t^*_{\g,\k,> 0}$ its interior.
We may also choose compatible root systems on $K$, $G$:
If $\lambda$ is dominant for $G$, then the restriction of $\lambda$ to $i\t_{\k}$ is dominant.

For $\lambda\in \Lambda_{G,\geq 0}$ (resp.  $\mu\in \Lambda_{K,\geq 0}$),
denote by $V_{\lambda}^G$ (resp. $V_{\mu}^K$) the irreducible representation of $G$ (resp. $K$) of highest weight
 $\lambda$ (resp. $\mu$).

Under the action of $G\times G$,
$$V=\oplus_{\lambda\in \Lambda_{G,\geq 0}} V_\lambda^G\otimes V_{\lambda^*}^G.$$
So, under the action of $G\times K$,
$$V=\oplus_{\lambda,\mu} m_{G,K}(\lambda,\mu) V_\lambda^G\otimes V_{\mu^*}^K$$
($\lambda$ varies in $\Lambda_{G,\geq 0}$, and  $\mu$ in  $\Lambda_{K,\geq 0}$)
where $m_{G,K}(\lambda,\mu)$ is the multiplicity of the representation $\mu$ in the restriction of $V_\lambda^G$ to $K$; it is also called the branching coefficient.

Let us write coordinates for $T^*G$.
We identify the tangent bundle $T G$ to $G\times\g$ through the left translations: to
$(g,X)\in G\times\g$ we associate $\frac{d}{dt} e^{-tX}g\vert_{t=0}\in T_g G$.
Thus, $T^*G$ is identified to $G\times \g^*$, and  the  moment map relative to the $G\times K$-action is the
map $\Phi_{G} \oplus \Phi_K : T^*G \to \g^*\oplus \k^*$ defined by
$(g,\xi)\to (\xi,-\pi(g^{-1}\cdot\xi)).$
In order to relate the moment map to the branching coefficient $m_{G,K}(\lambda,\mu)$,
we use the slightly modified map: $\Phi:T^*G \to i\g^*\oplus i\k^*$
given by $$\Phi(g,\xi)\to (i\xi,i\pi(g^{-1}\cdot\xi)).$$
Here $g\in G$, and $\xi\in \g^*$.

Let $$C_{G,K}(T^*G)=\Phi(T^*G)\cap i\t^*_{\g,\k,\geq 0}$$
be the Kirwan cone associated to $\Phi$:
$$
C_{G,K}(T^*G)=\left\{(\xi,\eta)\in i\t^*_{\g,\geq 0}\times i\t^*_{\k,\geq 0}; \eta \in \pi(G\cdot \xi)\right\}.
$$

The set $C_{G,K}(T^*G)$ is a polyhedral cone in $i\t^*_{\g,\k,\geq 0}$, and is related to the branching coefficients through the following basic result.
\begin{proposition}\label{Conesupport}
We have $m_{G,K}(\lambda,\mu)=0$ if $(\lambda,\mu)\notin C_{G,K}(T^*G)$.

Conversely, if $(\lambda,\mu)$ is a pair of dominant weights contained in
$C_{G,K}(T^*G)$, there exists an integer $k>0$ such that
$m_{G,K}(k\lambda,k\mu)$  is  non zero.
\end{proposition}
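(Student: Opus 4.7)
The plan is to mimic, in the non-linear setting, the Mumford-type proof that was applied in Section~\ref{linearrep} to the linear representation $\Sym(\CH)$, by reducing the branching problem to a statement about $K$-invariants in sections of an ample line bundle on a compact K\"ahler manifold. First, by Frobenius reciprocity,
\[ m_{G,K}(\lambda,\mu)=\dim\hom_K(V_\mu^K,V_\lambda^G)=\dim\bigl(V_\lambda^G\otimes V_\mu^{K,*}\bigr)^K. \]
By the Borel--Weil theorem applied to each factor, if $\lambda$ and $\mu$ are strictly dominant one has $V_\lambda^G\otimes V_\mu^{K,*}\simeq H^0(X,\CL_{\lambda,\mu})$ where $X=G_\C/B_G\times K_\C/B_K$ is a compact K\"ahler manifold and $\CL_{\lambda,\mu}$ is the ample line bundle $L_\lambda\boxtimes L_{-w_0^K\mu}$. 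The group $K_\C$ acts diagonally on $X$ and $m_{G,K}(\lambda,\mu)=\dim H^0(X,\CL_{\lambda,\mu})^{K_\C}$.

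Second, I would invoke Mumford's theorem exactly in the form already used in Section~\ref{linearrep}: for a compact K\"ahler Hamiltonian $K$-manifold $X$ with ample $K$-equivariant line bundle $\CL$ and moment map $\Psi:X\to i\k^*$, one has $H^0(X,\CL)^{K_\C}=0$ whenever $0\notin\Psi(X)$, and conversely $H^0(X,\CL^{\otimes k})^{K_\C}\neq 0$ for some $k\geq 1$ whenever $0\in\Psi(X)$. Passing from $(\lambda,\mu)$ to $(k\lambda,k\mu)$ corresponds to replacing $\CL$ by $\CL^{\otimes k}$, so the two halves of the proposition will follow once the zero-fibre condition on $X$ is identified with the Kirwan-cone condition on $T^*G$.

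Third, I would compute the moment map $\Psi$. Identifying $G_\C/B_G$ with the coadjoint orbit $G\cdot i\lambda$ (Kirillov--Kostant) and $K_\C/B_K$ with $K\cdot i\mu$, a direct calculation gives $\Psi(\xi,\eta)=\eta-\pi(\xi)$ for $(\xi,\eta)\in(G\cdot i\lambda)\times(K\cdot i\mu)$. Thus $0\in\Psi(X)$ if and only if there exist $g\in G$, $k\in K$ with $k\cdot i\mu=\pi(g\cdot i\lambda)$, i.e.\ $i\mu\in K\cdot\pi(G\cdot i\lambda)$. Intersecting with $i\t^*_{\k,\geq 0}$ (which, since every $K$-orbit in $i\k^*$ meets the positive chamber in a single point, is a faithful projection at the level of orbits) yields exactly the defining condition of $C_{G,K}(T^*G)$.

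The main obstacle is the last step, namely the careful orbit-level translation between ``the zero-fibre of the shifted $K$-moment map on the compact flag product is non-empty'' and ``$(\lambda,\mu)\in C_{G,K}(T^*G)$'', together with bookkeeping of signs and of the contragredient $-w_0^K\mu$. A minor subsidiary issue is the reduction from strictly dominant weights (where $\CL_{\lambda,\mu}$ is ample) to merely dominant weights on the boundary of the Weyl chambers: this can be handled either by replacing $B_G,B_K$ by parabolic subgroups adapted to the walls on which $\lambda,\mu$ lie, or by a density/continuity argument using that $C_{G,K}(T^*G)$ is a closed rational polyhedral cone and that positivity of multiplicities for some dilate is preserved under small rational perturbations within the cone.
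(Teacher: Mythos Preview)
The paper does not actually prove this proposition; it is one of the ``main general theorems\ldots stated without proofs'' announced in the abstract.  After stating it, the text simply records the consequence about the (asymptotic) support of $m_{G,K}$ and moves on.  For the analogous statement in the linear case (Section~\ref{linearrep}) the paper invokes Mumford's theorem \cite{Mum} and the exposition \cite{Be}, and the branching version is implicitly attributed to the same circle of ideas (Mumford, Kempf--Ness, Meinrenken--Sjamaar).

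Your approach is correct and is exactly the Mumford/GIT argument the paper alludes to, made explicit in the branching setting.  Two small comments.  First, you work on the compact product of flag varieties $G_\C/B_G\times K_\C/B_K$ rather than on the non-compact $T^*G$; this is a good choice, since Mumford's semistability criterion applies directly to projective varieties, and the identification of ``$0\in\Psi(X)$'' with ``$(\lambda,\mu)\in C_{G,K}(T^*G)$'' that you carry out is precisely the bridge between the two pictures.  Second, your handling of the boundary case (merely dominant $\lambda,\mu$) via partial flag varieties $G_\C/P$, $K_\C/Q$ is the standard and correct fix; the line bundle is then genuinely ample and the argument goes through unchanged.  The sign/contragredient bookkeeping you flag is routine: $K\mu^*=-K\mu$, so the zero-fibre condition reads $\pi(G\lambda)\cap K\mu\neq\emptyset$, which is exactly $\mu\in\pi(G\lambda)$, i.e.\ $(\lambda,\mu)\in C_{G,K}(T^*G)$.
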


Thus the support of the function $m_{G,K}(\lambda,\mu)$ is contained in the Kirwan polyhedron
$C_{G,K}(T^*G)$ and its asymptotic support is exactly the cone $C_{G,K}(T^*G)$.

Remark that if $G=K$, the cone $C_{G,K}(T^*G)$ is just the diagonal $\{(\xi,\xi), \xi\in i\t^*_{\g,\geq 0}\}$ in $i\t^*_{\g,\g,\geq 0}$.
However, {\bf we assume from now on} that no nonzero ideal of $\k$ is an ideal of $\g$ (this condition excludes the preceding case).
It implies the following result (Duflo, private communication).
\begin{lemma}
The polytope  $C_{G,K}(T^*G)$ is solid.
\end{lemma}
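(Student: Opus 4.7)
The plan is to reformulate solidity in Hamiltonian terms, compute the stabilizer of the $G\times K$-action on $T^*G$, and identify the resulting Lie-algebraic transversality with the stated ideal-theoretic hypothesis.

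First, I would recognize that $C_{G,K}(T^*G)$ is the Kirwan polyhedron of the $G\times K$-Hamiltonian manifold $T^*G$ associated to $\Phi$, and that the Kirwan polyhedron is solid in $i\t^*_{\g,\k,\geq 0}$ if and only if it has maximal dimension $\dim\t_\g+\dim\t_\k$. By general Kirwan theory this holds iff the image $\Phi(T^*G)$ has dimension $\dim\g+\dim\k$ in $\g^*\oplus\k^*$, iff $\Phi$ is a submersion at some point of $T^*G$, iff the $G\times K$-action on $T^*G$ has a point with $0$-dimensional stabilizer.

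Second, I would compute this stabilizer explicitly. In the trivialization $T^*G=G\times\g^*$, a pair $(g,k)\in G\times K$ fixes $(a,\xi)$ exactly when $g=aka^{-1}$ and $g\cdot\xi=\xi$, the latter forced by $\Phi_G(a,\xi)=\xi$. Hence the stabilizer Lie algebra identifies with
$$\{(aYa^{-1},Y)\ :\ Y\in\k\cap\mathrm{Ad}(a^{-1})\g_\xi\},$$
of dimension $\dim(\k\cap\mathrm{Ad}(a^{-1})\g_\xi)$. Choosing $\xi$ regular so that $\g_\xi=\t_\g$, this becomes $\dim(\k\cap\mathrm{Ad}(a^{-1})\t_\g)$. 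Since $\mathrm{Ad}(a^{-1})\t_\g$ runs over all Cartan subalgebras of $\g$ as $a$ varies in $G$, solidity of $C_{G,K}(T^*G)$ is equivalent to the existence of a Cartan subalgebra $\t'\subset\g$ with $\t'\cap\k=\{0\}$.

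Third, I would prove that this transversality condition is equivalent to the hypothesis that $\k$ contains no nonzero ideal of $\g$ (which is the same as the stated condition, since any $\g$-invariant subspace of $\k$ is automatically $\k$-invariant, so $\g$-ideals inside $\k$ coincide with $\k$-ideals that are also $\g$-ideals). For the easy direction, if $I\subset\k$ is a nonzero $\g$-ideal then $\g=I\oplus I^\perp$ as ideals, every Cartan of $\g$ decomposes as $\t'=\t'_I\oplus\t'_{I^\perp}$ with $\t'_I\subset I\subset\k$ nonzero, so no Cartan is transverse to $\k$. For the converse, assuming every Cartan meets $\k$ nontrivially, the natural candidate for a $\g$-ideal in $\k$ is $J:=\bigcap_{a\in G}\mathrm{Ad}(a)\,\k$, which is $\mathrm{Ad}(G)$-invariant by construction. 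Proving $J\neq 0$ proceeds by upper semicontinuity of the function $a\mapsto\dim(\k\cap\mathrm{Ad}(a)\t_\g)$: if its generic (minimum) value $r$ is strictly positive, then patching the $r$-dimensional generic intersections via the $G$-equivariance of the family yields a nonzero $\mathrm{Ad}(G)$-stable subspace of $\k$, hence a nonzero $\g$-ideal contained in $\k$.

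The main obstacle is this converse patching argument: the subspaces $\k\cap\mathrm{Ad}(a)\t_\g$ do not assemble into a $G$-equivariant subbundle over $G$, so one must work in $\mathrm{Gr}(r,\k)$ and analyze the closure of the image of $a\mapsto\k\cap\mathrm{Ad}(a)\t_\g$ under the adjoint action, or else invoke a direct structural argument for reductive subalgebras of compact Lie algebras. Given the paper's attribution to Duflo, this step is presumably the content of his unpublished argument on admissible subalgebras; once it is in hand, the equivalences of the preceding paragraphs immediately yield the lemma.
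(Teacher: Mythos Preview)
The paper does not prove this lemma at all: it is stated with the attribution ``(Duflo, private communication)'' and no argument is given. So there is no proof in the paper to compare your proposal against; the relevant question is whether your proposal actually establishes the result.

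Your first two steps are sound. The identification of solidity with the existence of a point of $T^*G$ with discrete $(G\times K)$-stabilizer is the standard Kirwan criterion, and your computation of the stabilizer of $(a,\xi)$ is correct: in the trivialization used in the paper the $(G\times K)$-action is $(h,k)\cdot(a,\xi)=(hak^{-1},h\cdot\xi)$, so the stabilizer Lie algebra has dimension $\dim(\k\cap\mathrm{Ad}(a^{-1})\g_\xi)$, and for $\xi$ regular this reduces solidity to the existence of a Cartan subalgebra $\t'\subset\g$ with $\t'\cap\k=0$. Your reformulation of the paper's hypothesis (``no nonzero ideal of $\k$ is an ideal of $\g$'') as ``$\k$ contains no nonzero $\g$-ideal'' is also correct, since any $\g$-ideal contained in $\k$ is automatically a $\k$-ideal.

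The gap is exactly where you locate it, but your sketch of the missing implication does not work. The direction you prove (a nonzero $\g$-ideal in $\k$ forces every Cartan to meet $\k$) is not the one needed for the lemma; what is needed is the converse, and your ``patching'' outline fails for the reason you yourself note and for a more basic one: the family $a\mapsto \k\cap\mathrm{Ad}(a)\t_\g$ is equivariant only under $K$ acting on the left and $N_G(T_G)$ on the right, not under $G$, so nothing $\mathrm{Ad}(G)$-stable can be extracted from it by equivariance alone. The candidate $J=\bigcap_{a\in G}\mathrm{Ad}(a)\k$ is indeed a $\g$-ideal in $\k$, but the hypothesis that every Cartan meets $\k$ gives no handle on $J$; one would need every $\mathrm{Ad}(a)\k$ to contain a common nonzero vector, which is a much stronger statement. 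In short, you have correctly reduced the lemma to the Lie-algebraic statement ``$\k$ contains no nonzero $\g$-ideal $\Rightarrow$ some Cartan of $\g$ is transverse to $\k$'', and this reduction is presumably the shape of Duflo's argument, but the proof of that implication is the entire content of the lemma and remains to be supplied.
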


Let $\pi_\lambda: G\lambda \to i\k^*$ be the restriction of $\pi$ to the orbit $G\lambda$.

\begin{definition}
Define the reduced space $M_{red,\mu}^\lambda=\pi_\lambda^{-1}(K\mu)/K$.
\end{definition}
Remark that  $\Phi^{-1}(G\lambda, K\mu)/(G\times K)$ is isomorphic to
 $M_{red,\mu}^\lambda$ and that the reduced space $M_{red,\mu}^\lambda$ is non empty if and only if
$(\lambda,\mu)\in C_{G,K}(T^*G)$.

The $[Q,R]=0$ theorem of Meinrenken-Sjamaar relates
$m_{G,K}(\lambda,\mu)$ to the Riemann-Roch number (suitably defined) of the space $M_{red,\mu}^\lambda$.
As a consequence, we obtain the following theorem.

\begin{theorem}\label{theo:mGK}
There exists a decomposition of the cone
$C_{G,K}(T^*G)=\cup_a \c_a$,  in {\bf closed solid  polyhedral cones} $\c_a$
such that the following property holds.

 For each $a$, there exists a  non zero quasi-polynomial function
$p_a$ on $\Lambda_G\oplus \Lambda_K$  such that
$$ m_{G,K}(\lambda,\mu)=p_a(\lambda,\mu)$$ if
$(\lambda,\mu)\in \c_a\cap (\Lambda_G\oplus \Lambda_K)$.
\end{theorem}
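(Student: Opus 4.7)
The plan is to deduce this theorem from the $[Q,R]=0$ theorem of Meinrenken--Sjamaar together with the Kawasaki--Riemann--Roch formula, following exactly the pattern by which Theorem \ref{theo:multH} is derived in the linear case, but now applied to the coadjoint orbit picture.

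First, I would rewrite the branching multiplicity in a form amenable to geometric quantization. Using
\[
m_{G,K}(\lambda,\mu) = \dim \hom_K(V_\mu^K, V_\lambda^G) = \dim\bigl(V_\lambda^G \otimes (V_\mu^K)^*\bigr)^K,
\]
and recalling via Borel--Weil that $V_\lambda^G$ is the quantization of the coadjoint orbit $G\lambda$ (with prequantum line bundle $L_\lambda$), while $(V_\mu^K)^* = V_{\mu^*}^K$ quantizes $K\mu^*$, I would identify $m_{G,K}(\lambda,\mu)$ with the $K$-invariant part of the quantization of the product Hamiltonian $K$-manifold $G\lambda \times K\mu^*$. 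The moment map for the diagonal $K$-action is $(\xi,\eta)\mapsto \pi(\xi)+\eta$, and its symplectic reduction at $0$ is canonically identified with $M_{red,\mu}^\lambda = \pi_\lambda^{-1}(K\mu)/K$, the space introduced just before the theorem.

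Next I would apply the $[Q,R]=0$ theorem of Meinrenken--Sjamaar to this reduction: it identifies $m_{G,K}(\lambda,\mu)$ with the Kawasaki--Riemann--Roch number of the orbifold $M_{red,\mu}^\lambda$ equipped with the descended orbifold line bundle $\widetilde L_{\lambda,\mu}$ coming from $L_\lambda \otimes L_{\mu^*}$. Now the chamber decomposition $C_{G,K}(T^*G)=\cup_a\c_a$ is constructed from Duistermaat--Heckman theory applied to the family of reductions, exactly as in the analogous statement Theorem \ref{multDH}: the walls are precisely the loci in parameter space $(\lambda,\mu)$ where the topology of the reduced orbifold $M_{red,\mu}^\lambda$ jumps. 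Within the interior of each closed polyhedral cone $\c_a$ the reduced orbifolds fit into a smooth family of constant diffeomorphism type, and $\widetilde L_{\lambda,\mu}$ varies linearly in $(\lambda,\mu)$. On such a chamber, Kawasaki--Riemann--Roch expresses the multiplicity as an integral of characteristic classes depending polynomially on the Chern parameters, with periodic coefficients arising from the finite isotropies of the orbifold strata; this produces the desired quasi-polynomial $p_a(\lambda,\mu)$. The chambers $\c_a$ are solid because $C_{G,K}(T^*G)$ is, by Duflo's lemma stated just before the theorem, and the non-vanishing of each $p_a$ on the interior of $\c_a$ follows from Proposition \ref{Conesupport}.

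The main obstacle is the content of Step 3: constructing the actual chamber decomposition and verifying the uniform family behavior of the reduced orbifolds across a chamber. This is precisely where one must invoke the full strength of Meinrenken--Sjamaar (or, equivalently, the parameter-dependent version of symplectic cutting due to Lerman) rather than just the single-point $[Q,R]=0$ statement. A secondary technical issue is the non-compactness of $T^*G$: one cannot quantize it directly, so one must either pre-reduce first by $G$ at $\lambda$, producing the compact coadjoint orbit $G\lambda$, and only then reduce by $K$ at $\mu$, or else argue via the proper moment map that the relevant Riemann--Roch indices are finite. Once this framework is in place the quasi-polynomiality on each $\c_a$ is automatic, and the linear dependence of the line bundle on $(\lambda,\mu)$ together with the integrality of the Chern classes controls the periods of $p_a$.
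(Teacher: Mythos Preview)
Your proposal is correct and follows the same approach the paper indicates: the paper simply states that Theorem~\ref{theo:mGK} is a consequence of the $[Q,R]=0$ theorem of Meinrenken--Sjamaar (via the Kawasaki--Riemann--Roch number of the reduced space $M_{red,\mu}^\lambda$), without giving further details. You have in fact supplied more of the underlying argument---the Borel--Weil identification, the reduction-in-stages to handle the non-compactness of $T^*G$, and the role of Duistermaat--Heckman theory in producing the chamber decomposition---than the paper itself, which explicitly announces in its abstract that such general theorems are stated without proof.
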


In particular, for any  pair  $(\lambda,\mu)$  of dominant weights contained in
$C_{G,K}(T^*G)$,
the function $k\to m_{G,K}(k\lambda,k\mu)$ is of the form:
$m_{G,K}(k\lambda,k\mu)=\sum_{i=0}^{N}E_i(k) k^i$
where $E_i(k)$ are periodic functions of $k$.
This formula is valid {\bf for any } $k\geq 0$, and in particular $E_0(0)=1$.

An  interesting example is the case of $K$ embedded in $G=K\times K$ by the diagonal.
Recall that $c_{\lambda,\mu}^\nu$ is the multiplicity of $V_\nu$ in the tensor product $V_\lambda\otimes V_\mu$.
Thus we obtain
\begin{corollary}
Let $K$ embedded in $G=K\times K$ by the diagonal.
If $(\lambda,\mu,\nu)$ is in the $C_{G,K}(T^*G)$, the dilated Littlewood-Richarson coefficient
$k\to  c_{k\lambda,k\mu}^{k\nu}$ is a quasi-polynomial function of $k\in \{0,1,2,\ldots\}$.
\end{corollary}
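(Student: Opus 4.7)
The plan is to derive this directly from Theorem \ref{theo:mGK}, once the correct identification is made. First, I would verify that for $G = K \times K$ with $K$ diagonally embedded, the branching coefficient is exactly the Littlewood-Richardson coefficient: a dominant weight for $G$ is a pair $(\lambda, \mu)$ of $K$-dominant weights, with $V_{(\lambda,\mu)}^G = V_\lambda^K \otimes V_\mu^K$, and the restriction to the diagonal $K$ decomposes this as $\oplus_\nu c_{\lambda,\mu}^\nu V_\nu^K$ by the very definition of the Littlewood-Richardson coefficients. Hence $m_{G,K}((\lambda,\mu),\nu) = c_{\lambda,\mu}^\nu$ in the conventions of the excerpt.

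Second, I would invoke Theorem \ref{theo:mGK} to obtain a finite decomposition $C_{G,K}(T^*G) = \cup_a \c_a$ into closed polyhedral cones, each carrying a quasi-polynomial $p_a$ on $\Lambda_G \oplus \Lambda_K$ that agrees with $m_{G,K}$ on $\c_a \cap (\Lambda_G \oplus \Lambda_K)$. Since $(\lambda,\mu,\nu) \in C_{G,K}(T^*G)$, at least one cone $\c_a$ contains it. A polyhedral cone being by definition closed and invariant under multiplication by non-negative scalars, the entire ray $\{k(\lambda,\mu,\nu) : k = 0, 1, 2, \ldots\}$ lies in $\c_a$. Therefore, for every $k \geq 0$,
\[ c_{k\lambda, k\mu}^{k\nu} = m_{G,K}((k\lambda, k\mu), k\nu) = p_a(k\lambda, k\mu, k\nu), \]
and the right-hand side, being the restriction of a multivariate quasi-polynomial to a rational line through the origin, is a quasi-polynomial function of the single variable $k$. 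The case $k=0$ is consistent because $0$ belongs to every $\c_a$ and $c_{0,0}^0 = 1$.

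There is no substantive technical obstacle here: all of the depth is absorbed into Theorem \ref{theo:mGK} (and hence into the Meinrenken-Sjamaar $[Q,R]=0$ theorem), while what remains is the bookkeeping identification of multiplicities together with the elementary observation that closed cones are invariant under dilation. The only subtlety worth flagging concerns the blanket hypothesis of Section \ref{Branching Rules} that no nonzero ideal of $\k$ is an ideal of $\g = \k \oplus \k$; for the diagonal embedding this forces the center of $\k$ to be trivial, so for non-semisimple $K$ one would either split off the central torus by Fourier expansion along its character lattice (reducing to the semisimple case fiberwise over the central characters) or appeal directly to the more general form of the $[Q,R]=0$ theorem, which produces piecewise quasi-polynomial behaviour of $m_{G,K}$ without requiring solidity of $C_{G,K}(T^*G)$.
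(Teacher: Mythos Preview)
Your proof is correct and follows exactly the paper's approach: the corollary is stated immediately after Theorem~\ref{theo:mGK} with no separate proof, the paper having just remarked that for the diagonal embedding $m_{G,K}((\lambda,\mu),\nu)=c_{\lambda,\mu}^{\nu}$ and that restricting the quasi-polynomial $p_a$ to the ray through a point of $C_{G,K}(T^*G)$ gives a quasi-polynomial in $k$. Your closing observation about the blanket hypothesis ``no nonzero ideal of $\k$ is an ideal of $\g$'' failing when $\k$ has nontrivial center is a genuine point that the paper glosses over; your suggested fixes (splitting off the central torus, or invoking the $[Q,R]=0$ theorem in a form not requiring solidity) are the natural ones.
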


To describe the cone $C_{G,K}(T^*G)$ is difficult,
and has been the object of numerous works, notably
Berenstein-Sjamaar, Belkale-Kumar,  Kumar,  Ressayre.
We refer to the survey article, \cite{Brion-bourbaki}. The complete description of the multiplicity function $m_{G,K}$, in particular the decomposition of  $C_{G,K}(T^*G)$ in $\cup_a \c_a$    is even more so.
However, we will give an algorithm where, given as input $(\lambda,\mu)$, the output is the dilated function
$k\to m_{G,K}(k\lambda,k\mu)$. In particular, we can  test if the point $(\lambda,\mu)$ is in the cone
$C_{G,K}(T^*G)$ or not, according if the output is not zero or zero.

Consider the set $\Psi\subset i \t_\k^*$ of non zero restrictions of the roots $\Delta_\g^+$ to $i\t_\k$.
We say that $\Psi$ is the list of restricted roots.

Recall that an hyperplane in $i\t_\k^*$ is $\Psi$-admissible if it is spanned by
elements of $\Psi$.  The set $\CA=\CA(\Psi)$ of admissible hyperplanes is finite.
For $H\in \CA$, consider  $X\in \t_\k$ such that $H=X^{\perp}$.
Let $\CW_\g$ be the Weyl group of $G$. If $X\in \t_\k\subset \t_\g$,
consider $wX\in \t_\g$ and  the hyperplane
$$H(w)=\{(\xi,\nu) \in i\t_\g^*\oplus i\t_\k^*; \langle \xi,wX\rangle -\langle \nu,X\rangle =0\}.$$
We obtain a finite set of hyperplanes $\CF$ in $i\t_\g^*\oplus i\t_\k^*$.

Consider a connected component $\tau$ of  the complement of the union of the  hyperplanes $H(w)$, where $H$ varies over admissible hyperplanes in $\t_\k^*$, and $w$ in the Weyl group of $G$, in other words  $\tau$ is a tope for
the system of hyperplanes $\CF$.
 So $\tau$ is an open conic  subset of $i\t_\g^*\oplus i\t_\k^*$.
Thus, if  $(\xi,\nu)\in \tau$,
for any admissible hyperplane $H\in \CA$ with equation $X$, and any $w\in \CW_\g$, we have
\begin{equation}
\langle \xi,wX\rangle -\langle \nu,X\rangle \neq 0.
\end{equation}

The following proposition follows from the description of the Duistermaat-Heckman measure \cite{Heck}.

\begin{proposition}
The facets of the cones $\c_a$ generates hyperplanes belonging to the family $\CF$.
\end{proposition}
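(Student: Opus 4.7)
My plan is to identify the walls of polynomiality of the Duistermaat-Heckman measure $dh^{T^*G}_{G,K}(\xi,\nu)$ on $i\t_\g^*\oplus i\t_\k^*$ and to invoke the Meinrenken-Sjamaar $[Q,R]=0$ theorem (as in Theorem~\ref{theo:mGK}) to transfer the information to the chambers of quasi-polynomiality $\c_a$ of $m_{G,K}$. Since $\Phi$ is homogeneous, $dh^{T^*G}_{G,K}$ is homogeneous on $C_{G,K}(T^*G)$, so its chambers of polynomiality --- and with them the $\c_a$ --- are solid conic regions whose facets lie in hyperplanes through the origin. It therefore suffices to show that each such wall-hyperplane belongs to $\CF$.

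To compute $dh^{T^*G}_{G,K}$ I would first reduce in the $G$-variable: for a regular $\xi$ in the interior of $i\t^*_{\g,\geq 0}$, the fiber $\nu\mapsto dh^{T^*G}_{G,K}(\xi,\nu)$ is, up to a Weyl-density factor, the Duistermaat-Heckman function of the $K$-Hamiltonian coadjoint orbit $(G\xi,\pi)$. Heckman's formula \cite{Heck} applied to this $K$-action describes the fiber DH function as a signed sum of piecewise polynomial functions: the $T_K$-fixed components of $G\xi$ are indexed by classes in $\CW_G$ modulo the relative Weyl group of $T_K$ in $T_G$, each such component maps under $\pi$ to a point of the form $\pi(w\xi)$ with $w\in\CW_G$, and the isotropy weights of $T_K$ at such a fixed point are the restrictions to $\t_\k$ of $w$-conjugates of the roots of $\g$, which collectively make up the restricted root system $\Psi$. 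The resulting walls are affine hyperplanes of the form $\{\nu : \nu-\pi(w\xi)\in H\}$ with $H\in\CA$ a $\Psi$-admissible hyperplane and $w\in\CW_G$.

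Concretely, if $X\in\t_\k$ is an admissible normal with $H=X^\perp$, such a wall reads $\langle\nu,X\rangle=\pi(w\xi)(X)=\langle \xi,w^{-1}X\rangle$, which is precisely the defining equation of the hyperplane $H(w^{-1})\in\CF$. Letting $(\xi,\nu)$ vary jointly in $i\t^*_{\g,\k,> 0}$ and using the family version of Heckman's theorem, together with the conicity of the chambers (which extracts the through-the-origin part of each affine wall), every facet of a chamber of polynomiality of $dh^{T^*G}_{G,K}$ ends up in $\CF$. Theorem~\ref{theo:mGK} then identifies these chambers with the $\c_a$, proving the proposition. The main obstacle I anticipate is carrying out the fixed-point analysis when $\t_\k$ is not regular in $\t_\g$, i.e.\ when some roots of $\g$ restrict to zero on $\t_\k$: isolated $T_K$-fixed points must be replaced by positive-dimensional fixed submanifolds and the equivariant version of Heckman's formula invoked; one must then verify that the extra contributions still produce walls of the form $H(w)$ and that no unwanted $\CW_K$-twisting spoils the identification.
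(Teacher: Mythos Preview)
Your approach is correct and matches the paper's: the paper simply states that the proposition ``follows from the description of the Duistermaat-Heckman measure \cite{Heck}'' without further detail, and your proposal is precisely an unpacking of that citation---Heckman's formula expresses the push-forward measure on $G\xi$ under $\pi$ as an alternating sum over $\CW_\g$-translates with local weights in $\Psi$, producing walls exactly of the form $H(w)$. Your caveat about positive-dimensional $T_K$-fixed loci is well taken but not needed here, since later in Section~\ref{branch.general} the paper explicitly assumes that $i\t_\k$ contains a $\Delta_\g$-regular element, which guarantees isolated fixed points.
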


Thus the following lemma follows, and will be useful.
\begin{lemma}
\begin{itemize}Fix a cone $\c_a$.
\item  If $\tau$ is a tope, then
$\tau\cap i\t^*_{\g,\k,\geq 0}$ is
either contained in $\c_a$, or
is disjoint from $\c_a$.
\item  The closed cone $\c_a$ is the union of
the closures of the sets $\tau\cap i\t^*_{\g,\k,\geq 0}$
 over the $\tau$ such that $\tau\cap \c_a$ is non empty.
 \end{itemize}
\end{lemma}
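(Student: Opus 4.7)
The plan is to exploit the preceding Proposition, which asserts that every facet of every cone $\c_a$ generates a hyperplane in the family $\CF$, together with the defining property of a tope as an open, convex, connected component of the complement of the union of hyperplanes in $\CF$, hence a set disjoint from every such hyperplane.

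For the first assertion I would argue by contradiction. Suppose $\tau\cap i\t^*_{\g,\k,\geq 0}$ contains a point $p\in\c_a$ and a point $q\notin\c_a$. Since $\tau$ and the Weyl chamber are both convex, the segment $[p,q]$ lies entirely in $\tau\cap i\t^*_{\g,\k,\geq 0}$. Using that $\c_a$ is closed, set $t^*=\sup\{t\in[0,1]:(1-t)p+tq\in\c_a\}$; the point $r=(1-t^*)p+t^*q$ then lies in $\c_a$, and arbitrarily small forward perturbations along the segment exit $\c_a$, so $r$ sits on the relative boundary of $\c_a$ and therefore on some facet of $\c_a$. By the preceding Proposition this facet is contained in a hyperplane $H\in\CF$, so $r\in H$; but $r\in[p,q]\subset\tau$ and $\tau$ is disjoint from $H$, a contradiction.

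For the second assertion, one inclusion follows immediately from the first part: if $\tau\cap\c_a\neq\emptyset$ then $\tau\cap i\t^*_{\g,\k,\geq 0}\subset\c_a$, and its closure stays in $\c_a$ since $\c_a$ is closed. For the reverse inclusion I would use the solidity of $\c_a$: its interior in $i\t^*_\g\oplus i\t^*_\k$ is non-empty, and as an open subset of the closed Weyl chamber it must lie in $i\t^*_{\g,\k,>0}$. Given $x\in\c_a$, I approximate $x$ by a sequence $x_n$ in this interior chosen to avoid the finitely many hyperplanes of $\CF$ (whose complement is dense). Each $x_n$ lies in some tope $\tau_n$ meeting $\c_a$, and by finiteness of the collection of topes some single tope $\tau$ contains infinitely many $x_n$, yielding $x\in\overline{\tau\cap i\t^*_{\g,\k,\geq 0}}$.

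The only delicate point is ensuring, in the first step, that the boundary-crossing point $r$ really lies on a codimension-one face so that the preceding Proposition applies; this is automatic because the facets of any closed polyhedral cone cover its relative boundary.
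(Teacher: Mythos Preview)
Your argument is correct. The paper itself does not give a proof; it simply states ``Thus the following lemma follows'' immediately after the Proposition asserting that the facets of each $\c_a$ lie in hyperplanes of $\CF$. What you have written is exactly the natural unpacking of that one-line assertion: use convexity of $\tau$ and of the Weyl chamber to trap a boundary point of $\c_a$ on the segment, invoke the facet Proposition to place that point in some $H\in\CF$, and derive a contradiction with the definition of a tope; then use solidity of $\c_a$ together with density of the $\CF$-regular locus and finiteness of the set of topes for the reverse inclusion. There is nothing to correct.
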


Remark that there might be several  topes $\tau$ needed to obtain $\c_a$.

We rephrase Theorem \ref{theo:mGK} as follows.
\begin{proposition}[``Continuity property" of $m_{G,K}$]
\begin{itemize}
\item If $\tau$ is a tope, the function $(\lambda,\mu)\to m_{G,K}(\lambda,\mu)$
is given by a quasi polynomial function $n_\tau$ on
$\tau\cap \Lambda_{G,K,\geq 0}.$
\item If $\tau\cap i\t^*_{\g,\k,\geq 0}$
 is contained in  the cone $C_{G\times K}(T^*G)$,
 and if  $(\lambda,\mu)\in \overline \tau \cap \Lambda_{G,K,\geq 0}$, then
 $$m_{G,K}(\lambda,\mu)=n_\tau(\lambda,\mu).$$
\end{itemize}
 \end{proposition}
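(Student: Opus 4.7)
The plan is to combine Theorem~\ref{theo:mGK}, Proposition~\ref{Conesupport}, and the preceding lemma; the substantive geometric content is already contained in those, so the work here is to define $n_\tau$ and then propagate the identity $m_{G,K}=n_\tau$ from $\tau$ to its closure.

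First I treat (i). Fix a tope $\tau$. By the preceding lemma, for each index $a$ the set $\tau\cap i\t^*_{\g,\k,\geq 0}$ is either contained in $\c_a$ or disjoint from $\c_a$. Since $C_{G,K}(T^*G)=\cup_a\c_a$, exactly one of two cases occurs: either there is a (unique) index $a$ with $\tau\cap i\t^*_{\g,\k,\geq 0}\subset \c_a$, or $\tau\cap i\t^*_{\g,\k,\geq 0}$ is disjoint from $C_{G,K}(T^*G)$. In the first case I define $n_\tau:=p_a$, where $p_a$ is the quasi-polynomial provided by Theorem~\ref{theo:mGK}. In the second case Proposition~\ref{Conesupport} forces $m_{G,K}$ to vanish on $\tau\cap\Lambda_{G,K,\geq 0}$, so I set $n_\tau:=0$. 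In both situations $n_\tau$ is a quasi-polynomial on $\Lambda_G\oplus\Lambda_K$ agreeing with $m_{G,K}$ on $\tau\cap\Lambda_{G,K,\geq 0}$, which proves~(i).

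For (ii), the hypothesis $\tau\cap i\t^*_{\g,\k,\geq 0}\subset C_{G,K}(T^*G)$ rules out the second case above, so $n_\tau=p_a$ for a unique $a$ with $\tau\cap i\t^*_{\g,\k,\geq 0}\subset \c_a$. It then suffices to prove the inclusion $\overline{\tau}\cap i\t^*_{\g,\k,\geq 0}\subset \c_a$, after which Theorem~\ref{theo:mGK} immediately yields $m_{G,K}(\lambda,\mu)=p_a(\lambda,\mu)=n_\tau(\lambda,\mu)$ for every $(\lambda,\mu)\in\overline{\tau}\cap\Lambda_{G,K,\geq 0}$. To prove the inclusion I would use a standard convexity argument. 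Pick a base point $(\xi_0,\nu_0)\in \tau\cap i\t^*_{\g,\k,\geq 0}$ and any $(\lambda,\mu)\in\overline{\tau}\cap i\t^*_{\g,\k,\geq 0}$. Since $\tau$ is an intersection of finitely many open half-spaces through the origin, each defining linear inequality of $\tau$ is strict at $(\xi_0,\nu_0)$ and weak at $(\lambda,\mu)$, hence strict along the whole open segment joining them; so this open segment lies in $\tau$. Convexity of $i\t^*_{\g,\k,\geq 0}$ keeps the segment inside the chamber, so the open segment sits in $\tau\cap i\t^*_{\g,\k,\geq 0}\subset \c_a$, and since $\c_a$ is closed the endpoint $(\lambda,\mu)$ also belongs to $\c_a$.

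The potential difficulty would have been a mismatch between the tope decomposition (built from $\CF$) and the chamber decomposition (built from the $\c_a$), but the preceding proposition --- asserting that every facet of a $\c_a$ generates a hyperplane of $\CF$ --- implies that the topes refine the $\c_a$, which is exactly what is needed to make each $n_\tau$ well-defined via a single $p_a$. Granting this refinement property, which is the deep input (coming from the Duistermaat--Heckman description of walls and from $[Q,R]=0$), only the elementary segment/closure argument above remains.
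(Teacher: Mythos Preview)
Your argument is correct and matches the paper's intent: the paper presents this proposition explicitly as a rephrasing of Theorem~\ref{theo:mGK} via the preceding lemma, without giving a separate proof, and you have supplied precisely the details that justify this rephrasing. Two minor remarks: the uniqueness of the index $a$ in~(i) is not needed (any $a$ with $\tau\cap i\t^*_{\g,\k,\geq 0}\subset\c_a$ will do, since all corresponding $p_a$ agree with $m_{G,K}$ there), and in~(ii) you implicitly assume $\tau\cap i\t^*_{\g,\k,\geq 0}$ is nonempty in order to choose your base point---a harmless assumption that the paper also takes for granted.
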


\bigskip

We finally state a result on the behavior  of the function $m_{G,K}(\lambda,\mu)$ on the boundary of the
polyhedral cone $C_{G\times K}(T^*G)$.

If $X\in i\t_\k$, we have an injection $K(X)\subset G(X)$.
 Let $X\in i\t_\k$, $H=X^{\perp}$, and  $w\in W$  such that
$\langle \xi,wX\rangle -\langle \nu, X\rangle \geq 0$ for all $(\xi,\nu)\in C_{G\times K}(T^*G)$.
We assume that $H(w)$ contains an element $(\lambda,\mu)\in i\t^*_{G,K,>0}$, in other words that $F$ is a regular face.
Let $F=H(w)\cap C_{G\times K}(T^*G)$ be the corresponding face of
the polyhedral cone $C_{G\times K}(T^*G)$.

As $w$ is defined modulo the Weyl group of the stabilizer of $X$,
we may choose $w$ such that, if $\lambda$ is dominant for $G$, then $w^{-1}(\lambda)$ is dominant for $G(X)$.
If $(\lambda,\mu) \in F$, the couple $(w\lambda,\mu)$ is a couple of dominant weights for $(G(X), K(X)).$
The following proposition follows again from the $[Q,R]=0$ theorem of Meinrenken-Sjamaar (see also \cite{Par-Ve}).
\begin{proposition}
For any $(\lambda,\mu) \in F$, we have
$m_{G,K}(\lambda,\mu)=m_{G(X),K(X)}(w\lambda,\mu).$
\end{proposition}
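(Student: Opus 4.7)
The strategy parallels the proof of the analogous reduction formula for linear actions, Theorem~\ref{th:restriction}: both sides will be realized as Riemann-Roch numbers of the same reduced symplectic orbifold and identified via the $[Q,R]=0$ theorem of Meinrenken-Sjamaar applied twice, first to $G\times K$ and then to $G(X)\times K(X)$, the novelty being the Weyl-group twist by $w$ needed to land in the positive chamber of the smaller group.

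First, I would invoke $[Q,R]=0$ for the Hamiltonian $G\times K$-action on $T^*G$ to identify $m_{G,K}(\lambda,\mu)$ with the (Kawasaki-)Riemann-Roch number of the reduced space $M_{\mathrm{red},(\lambda,\mu)} = \Phi^{-1}(G\lambda\times K\mu)/(G\times K) \cong \pi_\lambda^{-1}(K\mu)/K$ equipped with its induced prequantum line bundle. Regularity of the face $F$ guarantees that this space is non-empty and of the expected dimension for $(\lambda,\mu)$ in the relative interior of $F$, with at worst orbifold singularities.

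Next, I would apply the symplectic cross-section theorem of Guillemin-Sternberg near the face $F$. The equation $\la\xi,wX\ra=\la\nu,X\ra$ cuts out $F$ inside the Kirwan cone, and the fact that it is saturated on $\Phi^{-1}(G\lambda\times K\mu)$ forces the relevant piece of $T^*G$ to lie over the wall of the Weyl chamber stabilized by $X$. Cross-sectioning there produces an open Hamiltonian $G(X)\times K(X)$-submanifold $Y\subset T^*G$ whose $G\times K$-saturation covers a neighborhood of $\Phi^{-1}(G\lambda\times K\mu)$, together with a prequantum identification of $M_{\mathrm{red},(\lambda,\mu)}$ with the $G(X)\times K(X)$-symplectic reduction of $Y$ at $(w\lambda,\mu)$. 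The Weyl element $w$ enters precisely here: $\lambda$ is $G$-dominant but must be transported into the $G(X)$-dominant chamber by the slice identification, and $w$ was chosen so that $w\lambda$ is exactly the $G(X)$-dominant representative of the $W_{G(X)}$-orbit of $\lambda$.

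Applying $[Q,R]=0$ a second time to the $G(X)\times K(X)$-action on $Y$ identifies this Riemann-Roch number with $m_{G(X),K(X)}(w\lambda,\mu)$, completing the chain of equalities. The main obstacle is the cross-section step: one must produce $Y$ so that both the symplectic form and the prequantum line bundle behave correctly under restriction, and verify that the Weyl twist $\lambda\mapsto w\lambda$ is exactly what makes the reduction parameter $G(X)$-dominant, so that the second application of $[Q,R]=0$ yields a genuine branching multiplicity rather than a signed virtual character. For this bookkeeping I would rely on the transverse-index framework developed in \cite{Par-Ve}, which handles this type of face-reduction for general Hamiltonian actions.
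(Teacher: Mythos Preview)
Your proposal is correct and matches the paper's approach: the paper does not give a detailed proof but simply states that the proposition follows from the $[Q,R]=0$ theorem of Meinrenken-Sjamaar, referring also to \cite{Par-Ve}, and mentions Ressayre's alternative proof via surjectivity of restriction of holomorphic sections. Your two applications of $[Q,R]=0$ connected by a symplectic cross-section argument, with the Weyl twist explained, is exactly the unpacking of that one-line attribution.
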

A proof of this theorem using surjectivity of the restriction of homolorphic sections, with given invariance,
 is given in Ressayre \cite{Ressayre-reduction}.

\subsection{Branching theorem: a piecewise quasi-polynomial formula}\label{branch.general}

Our aim is to give an explicit   quasi-polynomial formula for
$m_{G,K}(\lambda,\mu)$ on a tope $\tau$ in terms of iterated residues.

 We assume, for simplicity, that $i\t_\k$ contains a regular (with respect to
$\Delta_\k$) element $X$ which is regular also for $\Delta_\g$ (this is not always the case).
We use this element to define  positive compatible root systems $\Delta^+_\g$ and $\Delta^+_\k$ as follows:
 $\Delta^+_\g:=\{ \alpha \in \Delta_\g , \alpha (X)>0\}$ and $\Delta^+_\k=\{ \alpha \in \Delta_\k, \alpha (X)>0\}.$  Given $\alpha \in \Delta^+_\g$, denote by $\overline {\alpha}$ the restriction  $\alpha_{|_{\t_k}}.$ Then $0 \neq  \overline{\alpha} \in  \Delta_\k^+.$
Thus the system  $\Psi$ in $i\t_\k^*$   consists on  the restrictions of $\Delta_\g^+$ repeated with multiplicities:
 $$\Psi=[
 \overline{\alpha}, \alpha \in \Delta^+_\g].$$
The system $\Psi$ contains $\Delta_\k^+$.
By our construction, all elements $\psi$ in $\Psi$ satisfy $\langle \psi, X\rangle >0$.

We denote by  $\Psi\backslash \Delta_\k^+$ the list where we have removed $\Delta_\k^+$ from $\Psi$.
More precisely, if $\alpha \in \Delta_\k^+$ occur in $\Psi$ with multiplicity $m>0$, then
 $\alpha \in \Delta_\k^+$ occur in
$\Psi\backslash \Delta_\k^+$  with multiplicity $m-1$.

\begin{example} \label{exarestricted}\end{example}Let $G=SU(n)$ and $K=SU(n_1)\times SU(n_2)$ with $n=n_1n_2$.
We consider $\t$ the Cartan subalgebra of $\g$ given by the diagonal matrices of trace zero and $\t_k=\t_1\times\t_2$
the Cartan subalgebras of $\k$ given by the corresponding diagonal matrices.
We take the embedding from $i\t_1\times i\t_2 \rightarrow i\t$
given by $$diag(a_1,\ldots,a_{n_1})\times diag(b_1,\ldots,b_{n_2})
  \rightarrow $$$$diag(a_1+b_1, a_2+b_1,\ldots,a_{n_1}+b_1,a_1+b_2, \ldots, a_{n_1}+b_2, \ldots, a_{1}+b_{n_2},\ldots, a_{n_1}+ b_{n_2}).$$
   We take the lexicographic order. The list of restricted roots is thus the list
   \begin{equation} \label{restricted} \Psi=[(a_i-a_j+b_k-b_\ell)].  \end{equation}There $i,j$  varies between $1$ and $n_1$,
and $k,\ell$ and varies between $1$ and $n_2$.
The couple $(i,k)$ being different from $(j,\ell)$, so all restricted roots are non zero.
This lexicographic order is compatible, as one check easily, that the restrictions are not zero and we get exactly
the order $a_1\geq a_2\geq \cdots \geq a_{n_1}$ on $\t_1$ and similarly for $\t_2.$

Explicitly, we can take the embedding of the element $$X=diag(n_1,\ldots,1)\times diag((n_2-1)n_1+1,\ldots,1).$$

For $n_1=2, n_2=3$ then $$X=diag(2,1),\times diag(5,3,1)$$
 and the embedded element is $diag(7,6,5,4,3,2).$

We do not know how to compute the set  $\CA(\Psi)$ of admissible hyperplanes for the system $\Psi$, for any $(n_1,n_2)$.
We computed it for a few examples (see also \cite{V-W}), but we do not see a general pattern.
Furthermore, the cardinal of the set $\CA(\Psi)$ (up to Weyl group action) seems to grow quickly.
$\Box$


\bigskip

Let $\lambda\in \Lambda_{G,\geq 0}$
be the highest weight of an irreducible representation of $G$.
The character $\chi_{\lambda}$  of $V_\lambda^G$ is  given by  the Hermann Weyl character formula:
$$\chi_{{\lambda}|_{T_G}}=\sum_{ w \in W_\g}\frac{e^{w(\lambda)}}{\prod_{\alpha \in \Delta_\g^+}(1-e^{-w(\alpha)})}.$$
Restricting on  $T_K$, we obtain:
$$\chi_{{\lambda}|_{T_K}}=\sum_{ w \in \CW_\g}\frac{e^{{\overline{w(\lambda)}}}}{\prod_{\alpha \in \Delta_\g^+}(1-e^{-{\overline{w(\alpha)}}})}.$$

Let $w \in \CW_\g$.
Let $z\in (\t_\k)_\C$. Consider the meromorphic  function of $z$ given by
 $$s^w_{\lambda,\mu}(z)=\frac{e^{\la {\overline{w(\lambda)}-\mu},z\ra}}{\prod_{\alpha \in \Delta_\g^+}(1-e^{-\la\overline{w(\alpha)},z\ra})}.$$

Define
$$S^w_{\lambda,\mu}(z)=\prod_{\beta\in \Delta_\k^+}(1-e^{-\la \beta,z\ra}) s^w_{\lambda,\mu}(z)=
\frac{\prod_{\beta\in \Delta_\k^+}(1-e^{-\la \beta,z\ra})e^{\la \overline{w(\lambda)}-\mu,z\ra}}{\prod_{\alpha \in \Delta_\g^+}
(1-e^{-\la \overline{w(\alpha)}, z\ra })}
.$$

Consider a tope $\tau  \subset i\t_\g^*\oplus i\t_\k^*$ for the system of hyperplanes $\CF$.
  Let $(\xi,\nu)\in \tau$.
  Then for each $w\in  \CW_\g$, the element $\overline{w(\xi)}-\nu$ is $\Psi$-regular.
  Given $w\in \CW_\g$,   we note  $\a(\overline{w(\xi)}-\nu)$  the  tope for $\Psi$,
which contains the element $\overline{w(\xi)}-\nu.$
The tope $\a(\overline{w(\xi)}-\nu)$ depends only from $w$ and $\tau$,
so we denote it by $\a_w^{\tau}$.
We have defined the set ${\mathcal{OS}}(\Psi,\a^\tau_w)$ of adapted basis to the tope $a_w^{\tau}$ (Def. \ref{OS}).
Let $\Gamma_K$ be the dual lattice to $\Lambda_K$, and let $q=q(\Psi)$  be the index of $\Psi.$

\begin{proposition}\label{theo:ptau}
Let $\tau $ be a tope in $i\t^*_\g\oplus i\t^*_\k$ for the system $\CF$.
Define for $(\lambda,\mu)\in \Lambda_G\oplus \Lambda_K$,
$$p_\tau(\lambda,\mu)=\sum_{w \in \CW_\g}\sum_{\gamma\in \Gamma_K/q\Gamma_K}\sum_{\sigma \in  {\mathcal{OS}}(\Psi,\a^\tau_w)} Res_{{\overrightarrow{\sigma}}} S^w_{\lambda,\mu}(z+\frac{2i\pi}{q} \gamma).$$

Then, the function $p_\tau$ is a quasi-polynomial function on $\Lambda_G\oplus \Lambda_K$.

\end{proposition}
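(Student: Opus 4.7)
The plan is to establish quasi-polynomiality term-by-term in the triple sum over $w\in\CW_\g$, $\gamma\in\Gamma_K/q\Gamma_K$, and $\sigma\in\mathcal{OS}(\Psi,\a_w^\tau)$, and then conclude by noting that a finite sum of quasi-polynomial functions is again quasi-polynomial. So the core content is to prove that for each fixed choice of $(w,\gamma,\sigma)$, the function
\[
(\lambda,\mu)\longmapsto Res_{\overrightarrow{\sigma}}\,S^w_{\lambda,\mu}\!\left(z+\tfrac{2i\pi}{q}\gamma\right)
\]
is a quasi-polynomial on $\Lambda_G\oplus\Lambda_K$.

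First I would isolate the $(\lambda,\mu)$-dependence. Setting $u=\frac{2i\pi}{q}\gamma$ and
\[
R^w(z):=\frac{\prod_{\beta\in\Delta_\k^+}\bigl(1-e^{-\la\beta,z\ra}\bigr)}{\prod_{\alpha\in\Delta_\g^+}\bigl(1-e^{-\la\overline{w(\alpha)},z\ra}\bigr)},
\]
one has the factorization
\[
S^w_{\lambda,\mu}(z+u)=e^{\la\overline{w(\lambda)}-\mu,\,u\ra}\cdot e^{\la\overline{w(\lambda)}-\mu,\,z\ra}\cdot R^w(z+u).
\]
The function $R^w(z+u)$ is a meromorphic function of $z$ that does not depend on $(\lambda,\mu)$. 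At $z=0$, the factor $1-e^{-\la\overline{w(\alpha)},z+u\ra}=1-e^{-\la\overline{w(\alpha)},u\ra}e^{-\la\overline{w(\alpha)},z\ra}$ either is holomorphic and non-vanishing (when $e^{-\la\overline{w(\alpha)},u\ra}\neq 1$) or has a simple zero along the hyperplane $\la\overline{w(\alpha)},z\ra=0$ (when $e^{-\la\overline{w(\alpha)},u\ra}=1$), so $R^w(z+u)$ is the quotient of an expression holomorphic at $z=0$ by a product of linear forms $\la\overline{w(\alpha)},z\ra$. Since $\sigma$ is an ordered basis of $i\t_\k^*$, the iterated residue $Res_{\overrightarrow{\sigma}}$ of such a function is computed as a finite linear combination of coefficients in its hierarchical Laurent expansion in the coordinates $z_j=\la\alpha_j,z\ra$, and is well defined.

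Next I would perform the residue computation. The prefactor $e^{\la\overline{w(\lambda)}-\mu,\,u\ra}$ is a constant in $z$, so it comes out of the residue. The factor $e^{\la\overline{w(\lambda)}-\mu,\,z\ra}$ has Taylor expansion at $z=0$ of the form $\sum_{k}\frac{1}{k!}\la\overline{w(\lambda)}-\mu,z\ra^{k}$, whose coefficients are polynomial functions of $(\lambda,\mu)$. Convolving this Taylor series with the Laurent expansion of $R^w(z+u)$ and extracting the iterated residue yields a finite linear combination, with scalar coefficients, of polynomials in $(\lambda,\mu)$, i.e.\ a polynomial in $(\lambda,\mu)$. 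Multiplying by the prefactor, one obtains
\[
Res_{\overrightarrow{\sigma}}\,S^w_{\lambda,\mu}(z+u)\;=\;e^{\la\overline{w(\lambda)}-\mu,\,u\ra}\cdot P_{w,\gamma,\sigma}(\lambda,\mu),
\]
with $P_{w,\gamma,\sigma}$ a polynomial. It remains to check periodicity of the prefactor: since $\gamma\in\Gamma_K$ and $q\Lambda_K\subset \sum_{\psi\in\sigma}\Z\psi\subset\Lambda_K$, one has $\la q\mu',\gamma\ra\in q\Z$ for $\mu'\in\Lambda_K$, so $e^{-\la q\mu',u\ra}=1$, and the analogous identity holds for $\lambda\in\Lambda_G$ after restriction via $\lambda\to\overline{w(\lambda)}$. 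Thus the prefactor is periodic in $(\lambda,\mu)$ of some finite period, and the product is a quasi-polynomial function on $\Lambda_G\oplus\Lambda_K$.

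The main obstacle I anticipate is verifying the periodicity claim for the $\lambda$-dependence: one must check that $\la\overline{w(\lambda)},\gamma\ra$ is rational (indeed lies in $\frac{1}{N}\Z$ for some common $N$) for all $\lambda\in\Lambda_G$ and all $w,\gamma$ in play. This follows from the fact that the restriction map sends $\Lambda_G$ into the weight lattice of $T_K$, which pairs integrally with $\Gamma_K$; but depending on how $T_K\subset T_G$ sits, one may need to replace $q$ by a larger integer $Q$ (the lcm with the index of the restriction map) to get a clean statement of period. This does not affect the conclusion: once periodicity with some finite period is established, the sum $p_\tau$ is a finite $\C$-linear combination of products of periodic and polynomial functions on $\Lambda_G\oplus\Lambda_K$, hence quasi-polynomial by definition.
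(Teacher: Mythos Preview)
Your argument is correct and follows essentially the same route as the paper. The paper's own proof of this proposition consists of nothing more than writing out the explicit formula for $p_\tau$, relying on the general remark (made earlier, just before Definition~\ref{defmultT}) that an iterated residue of a function of the shape $e^{\la\nu,z+\frac{2i\pi}{q}\gamma\ra}\cdot(\text{something independent of }\nu)$ depends on $\nu$ through the periodic factor $e^{\la\nu,\frac{2i\pi}{q}\gamma\ra}$ times a polynomial coming from the Taylor expansion of $e^{\la\nu,z\ra}$; you have simply made this argument explicit.

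One small comment: your anticipated obstacle concerning the periodicity in $\lambda$ is not actually an obstacle. Since $T_K\subset T_G$, restriction of characters gives a homomorphism $\Lambda_G\to\Lambda_K$, so $\overline{w(\lambda)}\in\Lambda_K$ for every $\lambda\in\Lambda_G$ and every $w\in\CW_\g$. Hence $\la\overline{w(\lambda)},\gamma\ra\in\Z$ for all $\gamma\in\Gamma_K$, and the prefactor $e^{\la\overline{w(\lambda)}-\mu,\frac{2i\pi}{q}\gamma\ra}$ is already periodic of period $q$ in both $\lambda$ and $\mu$; no enlargement to some $Q>q$ is required.
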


\begin{proof}
Explicitly,
$$p_\tau(\lambda,\mu)=$$$$\sum_{w \in \CW_\g}\sum_{\gamma\in \Gamma_K/q\Gamma_K}\sum_{\sigma \in  {\mathcal{OS}}(\Psi,\a^\tau_w)} Res_{{\overrightarrow{\sigma}}}  \frac{e^{\la \overline{w(\lambda)}-\mu,z+\frac{2i\pi}{q} \gamma\ra}}{\prod_{\alpha \in \Delta_\g^+} (1-e^{-\la \overline{w(\alpha)}, z+\frac{2i\pi}{q} \gamma\ra })}\prod_{\beta\in \Delta_\k^+}(1-e^{-\la \beta,z+\frac{2i\pi}{q} \gamma\ra}).$$
\end{proof}

\begin{theorem}
Let $\tau$ be a tope, and let $(\lambda,\mu)\in \overline \tau\cap \Lambda_{G,K,\geq 0}$.
Then
\begin{enumerate}
\item  if $(\lambda,\mu)\notin
C_{G,K}(T^*G)$,

$$m_{G,K}(\lambda, \mu)=p_\tau(\lambda,\mu).$$

\item if $(\lambda,\mu)\in C_{G,K}(T^*G)$, {\bf and} the tope $\tau$ intersect  $C_{G,K}(T^*G)$,
 then
$$m_{G,K}(\lambda, \mu)=p_\tau(\lambda,\mu).$$
\end{enumerate}
\end{theorem}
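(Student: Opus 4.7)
The plan is to reduce the computation of $m_{G,K}(\lambda,\mu)$ to iterated Jeffrey--Kirwan residues by combining the Hermann Weyl character formula for $V_\lambda^G$ restricted to $T_K$, the Weyl denominator formula for $K$, and the Szenes--Vergne partition-function residue calculus (Theorem \ref{multT}). In spirit, it is the adaptation to the branching setting of the argument used to prove Theorem \ref{theo:multK}.

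First I would use the standard $K$-to-$T_K$ reduction \eqref{eq:multKmultT} to write
\begin{equation*}
m_{G,K}(\lambda,\mu) = \sum_{w' \in \CW_\k} \epsilon(w') \, m_T^{V_\lambda^G}\!\bigl(\mu + \rho_\k - w'(\rho_\k)\bigr),
\end{equation*}
where $m_T^{V_\lambda^G}(\nu)$ is the dimension of the $\nu$-weight space of $V_\lambda^G$ under $T_K$. Restricting Weyl's character formula to $T_K$ gives
\begin{equation*}
\chi_{V_\lambda^G}|_{T_K} = \sum_{w \in \CW_\g} \frac{e^{\overline{w(\lambda)}}}{\prod_{\alpha \in \Delta_\g^+}(1 - e^{-\overline{w(\alpha)}})},
\end{equation*}
so for each fixed $w$ the contribution of the $w$-summand to $m_T^{V_\lambda^G}(\nu)$ is a partition-function-type Fourier coefficient for the restricted-root multiset $\Psi = [\overline{w(\alpha)}]$. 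Theorem \ref{multT}, in the multi-dimensional residue form recalled in the remark after its statement, expresses each such coefficient as a sum of iterated residues indexed by $\mathcal{OS}(\Psi, \a_w^\tau)$ together with shifts $\frac{2i\pi}{q}\gamma$ over $\gamma \in \Gamma_K/q\Gamma_K$.

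Next I would interchange summations and apply the Weyl denominator identity
\begin{equation*}
\sum_{w' \in \CW_\k} \epsilon(w') \, e^{-\la \rho_\k - w'(\rho_\k), z\ra} = \prod_{\beta \in \Delta_\k^+}\bigl(1 - e^{-\la \beta, z\ra}\bigr),
\end{equation*}
which absorbs the $\CW_\k$-sum into the factor that appears in $S^w_{\lambda,\mu}(z)$; the resulting integrand becomes exactly $S^w_{\lambda,\mu}(z + \frac{2i\pi}{q}\gamma)$ from Proposition \ref{theo:ptau}. A crucial bookkeeping point is the matching of topes: by definition of the hyperplanes $H(w) = \{\la \xi, wX\ra - \la \nu, X\ra = 0\}$, a choice of $(\xi,\nu) \in \tau \subset i\t_\g^* \oplus i\t_\k^*$ fixes the sign of $\la \overline{w(\xi)} - \nu, X\ra$ for every $X \in \t_\k$ with $X^\perp \in \CA(\Psi)$, so that for every $w' \in \CW_\k$ the shifted point $\overline{w(\xi)} - \nu - \rho_\k + w'(\rho_\k)$ lies in the same tope $\a_w^\tau$ of $\Psi$ depending only on $w$ and $\tau$ (provided we first restrict to a ``deep interior'' of $\tau$, away from the $\CW_\k$-translates of $\rho_\k$).

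This establishes $m_{G,K}(\lambda,\mu) = p_\tau(\lambda,\mu)$ on the open tope $\tau \cap \Lambda_{G,K,\geq 0}$ when $\tau$ meets $C_{G,K}(T^*G)$, which is the interior of case (2). Extension to $\overline{\tau} \cap \Lambda_{G,K,\geq 0}$ follows by quasi-polynomial uniqueness: $p_\tau$ is quasi-polynomial by Proposition \ref{theo:ptau}, $m_{G,K}$ is quasi-polynomial on $\overline{\tau} \cap C_{G,K}(T^*G) \cap \Lambda_{G,K,\geq 0}$ by Theorem \ref{theo:mGK}, and they agree on a full-dimensional subset, so Lemma \ref{lem:equaquasipoly} forces equality. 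For case (1), if $(\lambda,\mu) \in \overline{\tau}$ lies outside $C_{G,K}(T^*G)$ then $m_{G,K}(\lambda,\mu) = 0$ by Proposition \ref{Conesupport}; if $\tau$ meets the cone the same uniqueness argument gives $p_\tau(\lambda,\mu) = 0$ as well, and if $\tau$ is disjoint from the cone one checks directly that every iterated-residue summand in $p_\tau$ vanishes, since for a deep-interior $\nu \in \a_w^\tau$ the underlying partition function $\CP_\Psi(\nu - \overline{w\lambda})$ is zero (the cone of summability does not contain the relevant point). The step I expect to be the main obstacle is precisely this tope bookkeeping: proving that the shifted weights $\overline{w(\lambda)} - \mu - \rho_\k + w'(\rho_\k)$, as $w$ and $w'$ vary, consistently land in the closure of the single tope $\a_w^\tau$ determined by $\tau$, so that Szenes--Vergne can be applied uniformly and the resulting residues reassemble into $p_\tau(\lambda,\mu)$.
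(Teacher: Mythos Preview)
Your overall strategy is the paper's: reduce to $T_K$-multiplicities via \eqref{eq:multKmultT}, expand the restricted Weyl character formula, apply Szenes--Vergne (Theorem \ref{multT}) to each $w$-summand for $(\lambda,\mu)$ deep inside $\tau$, collapse the $\CW_\k$-sum with the denominator identity, and extend to $\overline\tau$ by quasi-polynomial uniqueness (Theorem \ref{theo:mGK} and Lemma \ref{lem:equaquasipoly}). The ``deep interior'' device you flag is exactly how the paper ensures that all shifted points land in the single tope $\a_w^\tau$.

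The genuine gap is a missing \emph{polarization} step. For $w\neq 1$ the list $[\overline{w(\alpha)}:\alpha\in\Delta_\g^+]$ need not lie in a pointed cone of $i\t_\k^*$: some restrictions are positive on the chosen regular element $X\in i\t_\k$ and others negative (e.g.\ already for $SU(2)$ diagonal in $SU(2)\times SU(2)$). Hence the $w$-summand $e^{\overline{w(\lambda)}}/\prod_\alpha(1-e^{-\overline{w(\alpha)}})$ is not the generating series of a partition function, and Theorem \ref{multT} does not apply to it; in particular your jump from ``partition-function-type Fourier coefficient for $[\overline{w(\alpha)}]$'' to ``iterated residues indexed by $\mathcal{OS}(\Psi,\a_w^\tau)$'' (for the fixed $\Psi$) is unjustified. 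The paper inserts the identity $\frac{1}{1-e^{-\beta}}=-\frac{e^\beta}{1-e^{\beta}}$ on each factor with $\la\overline{w(\alpha)},X\ra<0$, rewriting the $w$-summand as $(-1)^{s_w}e^{\overline{w(\lambda)}+g_w}/\prod_{\psi\in\Psi}(1-e^{-\psi})$ with the \emph{fixed} pointed system $\Psi$. Only then is the coefficient of $e^\mu$ the honest value $\CP_\Psi(\overline{w(\lambda)}+g_w-\mu)$, Theorem \ref{multT} applies with bases in $\mathcal{OS}(\Psi,\a_w^\tau)$, and afterwards the polarization is undone \emph{inside the residue} as an identity of meromorphic functions of $z$, recovering $S^w_{\lambda,\mu}$. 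A smaller point: in case (1) there is no need to argue that individual residue summands vanish (they generally do not). Since the tope's dominant part lies outside the cone, $m_{G,K}\equiv 0$ there by Proposition \ref{Conesupport}, and the deep-interior identity then forces $p_\tau\equiv 0$ as a quasi-polynomial.
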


  To test if a regular element $(\lambda,\mu)$ is in the cone
  $C_{G,K}(T^*G)$. Take the tope $\tau$ containing $(\lambda,\mu)$.
  The point $(\lambda,\mu)$ is in the cone if and only if the quasi polynomial function
  $k\to p_\tau(k\lambda,k\mu)$ is non zero.

Before going into the proof of this theorem, let us make some remarks of how to somewhat reduce the complexity of this formula..

\begin{remark}\end{remark}
Since the system $\Psi$ contains the system $\Delta_\k^+$,   the function
 $S^w_{\lambda,\mu}(z)$ could be written as function with the smaller denominator $\prod_{\psi\in \Psi/\Delta_\k^+}(1-e^{-\la \psi,z\ra})$ and
 the residues could be taken over adapted $OS$ basis of the system $\Psi\backslash \Delta_\k^+$.
Not so many $w$  giving a non zero contribution to the formula.
Indeed, at least $w$ has to be such that $\overline{w\lambda}-\mu$ is in the cone generated by the restricted roots.
This is the so called valid permutations for $(\lambda,\mu)$ defined by Cochet in \cite{C2}, and his algorithm construct them recursively.
For example, as computed by Pamela Harris, \cite{PH}, when $\Psi$ is the system f positive roots of $A_r$, $\lambda$ the highest root and $\mu=0$, the number of $w$ such that $w\lambda$ is in the cone generated by the positive roots is the Fibonacci number $f(r)$, much smaller that $(r+1)!$, the order of the Weyl group.

$\Box$

\begin{proof}
Consider
$$\chi_{{\lambda}|_{T_K}}=\sum_{ w \in \CW_\g}\frac{e^{{\overline{w(\lambda)}}}}{\prod_{\alpha \in \Delta_\g^+}(1-e^{-{\overline{w(\alpha)}}})}.$$

 Using our regular element $X$, we rewrite this formula polarizing the linear form  $\overline{w(\alpha)}$: if
$\la\overline{w(\alpha)},X\ra<0$, we replace $\overline{w(\alpha)}$ by its opposite;
we then make  use of the identity  $\frac{1}{1-e^{-\beta}}=-\frac{e^{\beta}}{1-e^{\beta}}$.

Precisely, write  $\Psi=\Psi^1_{w} \cup \Psi^2_{w}$ as the disjoint union of  the two sets:
\noindent $$\Psi^1_{w}=\{\overline{ w\alpha} , \ \alpha \in \Delta_\g^+, \ \la\overline{w(\alpha)},X\ra >0\}$$
$$\Psi^2_{w}=\{- \overline{w\alpha},\ \alpha  \in \Delta_\g^+,  \ \la\overline{w(\alpha)},X\ra <0\}.$$
If  $s_w=|\Psi^2_{w}|$ and $e^{g_w}=\prod_{\overline{w\alpha} , \la\overline{w\alpha}  ,X\ra <0}  e^{\overline{w(\alpha)}  }$, then we obtain
that
$\chi_{{\lambda}|_{T_K}}$ is equal to

$$\sum_{ w \in \CW_\g}\left(\frac{e^{{\overline{w(\lambda)}}}}{\prod_{\psi  \in  \Psi^1_{w}}(1-e^{-\psi})}\frac{(-1)^{s_w}e^{ g_w}}{\prod_{\psi  \in \Psi^2_{w}}(1-e^{-\psi})}\right)=\sum_{ w \in \CW_\g}\left(\frac{e^{{\overline{w(\lambda)}}}(-1)^{s_w}e^{ g_w}}{\prod_{\psi \in  \Psi}(1-e^{-\psi})}\right).$$

\begin{lemma} The following holds on $T_K$:
$$\chi_\lambda=\sum_{ w \in \CW_\g}\sum_{\mu \in \Lambda^K}(-1)^{s_w} \CP_{\Psi}(\overline{w(\lambda)}+g_w-\mu) e^\mu$$
where $\CP_{\Psi}$ is the partition function determined by the restricted roots $\Psi.$
So
\begin{equation} \label{mult} m_{G,T_K}(\lambda,\mu)=\sum_{ w \in \CW_\g}(-1)^{s_w}\CP_{\Psi}(\overline{w(\lambda)}+g_w-\mu). \end{equation}
\end{lemma}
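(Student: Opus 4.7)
The plan is to start from the last displayed identity immediately preceding the lemma, namely
\[
\chi_{\lambda|T_K} \;=\; \sum_{w\in \CW_\g} \frac{(-1)^{s_w}\, e^{\overline{w(\lambda)}}\, e^{g_w}}{\prod_{\psi\in\Psi}(1-e^{-\psi})},
\]
and to expand each summand as a formal series in characters of $T_K$. The crucial point of the polarization step, already carried out in the text, is that after replacing each $\overline{w\alpha}$ with negative pairing against $X$ by its opposite, \emph{every} element $\psi\in\Psi$ satisfies $\langle\psi,X\rangle>0$. Thus the system $\Psi$ lies in an open half-space, the cone it generates is pointed, and each factor $(1-e^{-\psi})^{-1}$ expands as a convergent (or formal) geometric series $\sum_{n\ge 0} e^{-n\psi}$ in the region where $|e^{-\psi}|<1$.

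Next I would multiply these expansions together. By the very definition of the Kostant partition function associated with the list $\Psi$,
\[
\prod_{\psi\in\Psi}\frac{1}{1-e^{-\psi}} \;=\; \sum_{\nu\in\Lambda_K}\CP_\Psi(\nu)\, e^{-\nu},
\]
where $\CP_\Psi(\nu)$ counts the number of ways to write $\nu=\sum x_\psi\psi$ with $x_\psi\in\Z_{\ge 0}$. Multiplying by the numerator $e^{\overline{w(\lambda)}+g_w}$ and reindexing the sum by setting $\mu=\overline{w(\lambda)}+g_w-\nu$ gives
\[
\frac{e^{\overline{w(\lambda)}+g_w}}{\prod_{\psi\in\Psi}(1-e^{-\psi})}
\;=\; \sum_{\mu\in\Lambda_K}\CP_\Psi\!\bigl(\overline{w(\lambda)}+g_w-\mu\bigr)\, e^{\mu}.
\]

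Summing over $w\in\CW_\g$ with the signs $(-1)^{s_w}$ yields the first asserted identity
\[
\chi_\lambda \;=\; \sum_{w\in \CW_\g}\sum_{\mu\in\Lambda_K}(-1)^{s_w}\,\CP_\Psi\!\bigl(\overline{w(\lambda)}+g_w-\mu\bigr)\, e^\mu.
\]
The branching formula \eqref{mult} is then immediate: by definition $m_{G,T_K}(\lambda,\mu)$ is the coefficient of $e^\mu$ in $\chi_{\lambda|T_K}$, and the characters $\{e^\mu\}_{\mu\in\Lambda_K}$ are linearly independent, so one simply reads off the coefficient of $e^\mu$.

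The only real subtlety is ensuring that the formal manipulation is legitimate, i.e.\ that when one rearranges the double sum, each fixed weight $\mu$ receives contributions from only finitely many lattice points $\nu$. This is exactly guaranteed by the pointedness of $\mathrm{Cone}(\Psi)$ coming from the polarization using the regular element $X$: for fixed $\mu$, the condition $\CP_\Psi(\overline{w(\lambda)}+g_w-\mu)\neq 0$ forces $\overline{w(\lambda)}+g_w-\mu$ to lie in this pointed cone, which intersects any translate of $-\mathrm{Cone}(\Psi)$ in a finite set. Once this finiteness is in hand, the coefficient extraction is routine and no further obstacle remains.
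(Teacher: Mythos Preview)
Your proof is correct and is exactly the approach the paper indicates: the paper simply remarks that this is Kostant's multiplicity formula (when $K=T_G$) and that formula~(\ref{mult}) is obtained by the same method, and what you have written is precisely that method spelled out in detail. The key ingredients---the polarized character formula established just before the lemma, the geometric series expansion of $\prod_{\psi\in\Psi}(1-e^{-\psi})^{-1}$ as the generating function of $\CP_\Psi$, and the pointedness of $\mathrm{Cone}(\Psi)$ ensuring well-definedness---are all the right ones.
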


When $K$ is the maximal torus $T_G$, the formula above is Kostant multiplicity formula for a weight \cite{Kos}.
 The formula (\ref{mult}) is obtained by the same method.

Let us now use the formula
$$ m_{G,K}(\lambda,\mu) =
\sum_{\tilde w \in \CW_\k} \epsilon(\tilde w)m_{G,T_K}(\lambda,\mu -{\tilde w}(\rho_\k)+\rho_\k).$$

We obtain for $(\lambda,\mu)\in  \Lambda_{G,K,\geq 0}$

$$m_{G,K}(\lambda, \mu)=
\sum_{\tilde{w} \in \CW_\k} \epsilon(\tilde{w})\sum_{ w \in \CW_\g}(-1)^{s_w}\CP_{ \Psi}(\overline{w(\lambda)}+g_w -(\mu-\tilde{w}(\rho_\k)+\rho_\k))$$

(we may rewrite this expression as a sum of partition functions for $\Psi\backslash \Delta_\k^+$, obtaining Heckman formula, \cite{Heck},\cite{Knapp}).

Suppose first that $(\lambda,\mu)\in \tau\cap \Lambda_{G,K,\geq 0}$
is regular and "very" far away from all the walls $H(w)$.
So for all $w,\tilde w$,
the element $\overline{w(\lambda)}-\mu+g_w +\tilde{w}(\rho_\k)-\rho_\k$
is still in the tope $\a_{w}^{\tau}$ for $\Psi$ containing  $\overline{w(\lambda)}-\mu.$
We thus can employ the iterated residue formula
(Formula \ref{defres}) for $\CP_{ \Psi}(\overline{w(\lambda)}+g_w -(\mu-\tilde{w}(\rho_\k)+\rho_\k))$ on $\a_w^{\tau}$.
We obtain that $m_{G,K}(\lambda,\mu)$ is equal to
$$\sum_{\tilde{w} \in \CW_\k} \epsilon(\tilde{w})(-1)^{s_w}\sum_{w \in \CW_\g}\sum_{\gamma\in \Gamma^K/q\Gamma^K}
\sum_{\sigma \in   \mathcal{OS}(\Psi,\a^\tau_w)}
Res_{{\overrightarrow{\sigma}}}
\frac{e^{\la \overline{w(\lambda)}+g_w-(\mu-\tilde{w}(\rho_\k)+\rho_\k),
z+\frac{2i\pi}{q} \gamma\ra}}{\prod_{\psi \in \Psi} (1-e^{-\la \psi,z+\frac{2i\pi}{q} \gamma\ra})}.$$

Inverting the polarization process, we rewrite
$$(-1)^{s_w}\frac{e^{\la \overline{w(\lambda)}+g_w,z+\frac{2i\pi}{q} \gamma\ra}}
{\prod_{\psi \in \Psi} (1-e^{-\la \psi,z+\frac{2i\pi}{q} \gamma\ra})}=\frac{e^{\la \overline{w(\lambda)},
z+\frac{2i\pi}{q} \gamma\ra}}
{\prod_{\alpha \in \Delta_\g^+} (1-e^{-\la \overline{w\alpha},z+\frac{2i\pi}{q} \gamma\ra})}.$$

So,  remembering that $\prod_{\beta\in \Delta_\k^+}(1-e^{-\beta})=\sum_{\tilde{w}}e^{-\rho_\k+\tilde{w}(\rho_\k)}$, we obtain
$$m_{G,K}(\lambda, \mu)=p_\tau(\lambda,\mu)$$
when $(\lambda,\mu)\in  \tau\cap \Lambda_{G,K,\geq 0}$
is regular and "very" far away from all the walls $H(w)$.
Theorem \ref{theo:mGK} asserts that $m_{G,K}(\lambda, \mu)$ is given by a quasi polynomial formula
on $\tau\cap  \Lambda_{G,K,\geq 0}$
or even on $\overline\tau\cap \Lambda_{G,K,\geq 0}$
if  $\tau\cap i\t^*_{\g,\k,\geq 0}$ is contained in $C_{G,K}(T^*G)$.

Using Lemma \ref{lem:equaquasipoly} we obtain Theorem \ref{theo:ptau}.
 \end{proof}

\subsection{Singular case}\label{singular}

When  the stabilizer of $\lambda$ is large,
that is $\la\lambda,H_\alpha\ra$ is equal to $0$ for a large number of $\alpha$,
 then we  can rewrite   the Hermann Weyl formula for the character in a way that takes advantage of this.

Fix  a subset $\Sigma$ of the simple roots of $\Delta_\g^+$, and
let $\lp$ be the Levi subalgebra of $\g$, with simple root system $\Sigma$.
Let $\Delta_\lp^+$  be its positive root system.

Let $i \t_{\g,\Sigma}^*$ the set of the elements $\xi\in i\t_\g^*$   such that
$\la \xi,H_\alpha\ra=0$ for all $\alpha\in \Sigma$.
We define consistently
 $\t^*_{\g,\k,\Sigma, \geq 0},$
$\Lambda^{\Sigma}_{G}=\Lambda_G \cap i \t_{\g,\Sigma}^*$, a lattice in $i\t_{\g, \Sigma}^*$, $\Lambda_{G,K,\geq 0}^\Sigma=\Lambda_{G,K,\geq 0}\cap i \t^*_{\g, \Sigma}$,
and similarly.

We  also define
$$C^\Sigma_{G,K}(T^*G)=\{(\xi,\nu)\in C_{G,K}(T^*G); \xi\in i \t_{\g,\Sigma}^*\}.$$
In other words, $\nu$ must belong to the projection on $i\k^*$ of the singular orbit
$G\xi$.

The cone $ C^\Sigma_{G,K}(T^*G)$
is solid in $i \t_{\g, \Sigma}^*$, if and only if there exists $\xi\in i \t_{\g,\Sigma}^*$ such that
the projection  on $i\k^*$ of the singular orbit
$G\xi$ has a non zero interior in $i\k^*$. In other words the Kirwan polytope $\pi(G\xi)\cap i\t^*_\k$ is solid.

\begin{example}\end{example}
Consider the embedding of $K=U(n_2)\times U(n_3)/Z$ in $G=U(n_2n_3)$.
Here $n_2,n_3\geq 2$ and $Z$ is the subgroup
$\{t_2 Id, t_3 Id\}$ of the center of $U(n_2)\times U(n_3)$ with $t_2t_3=1$.
We take $\lambda$  a weight of $G$ with more than two non zero coordinates.
Then $\pi(G\lambda)$ has interior in $i\k^*$.

$\Box$

\bigskip

Let $$\Delta_{\uu}=\Delta_\g^+ \backslash \Delta_\l^+$$
and denote by $\CW_{\lp}$ the Weyl group of $\lp$.

For any $\lambda\in \Lambda_{G,\geq 0} \cap i\t_{\k,\Sigma}^*$,
we can write the character formula on $T_G$ and the restriction on $T_K$ as:
$$\chi_{{\lambda}|_{T_G}}=\sum_{ w \in \CW_\g/\CW_{\lp}}\frac{e^{w(\lambda)}}{\prod_{\alpha \in \Delta_{\uu}}(1-e^{-w(\alpha)})},$$
$$\chi_{{\lambda}|_{T_K}}=\sum_{ w \in \CW_\g/\CW_{\lp}}\frac{e^{{\overline{w(\lambda)}}}}{\prod_{\alpha \in \Delta_{\uu}}(1-e^{-{\overline{w(\alpha)}}})}.$$

\noindent In the regular case, the Levi component is just the Cartan subalgebra , the parabolic is the Borel subalgebra, and $\Delta_\uu=\Delta^+_\g$.

To compute $m_{G,K}(\lambda,\mu)$ for
$\Lambda_{G,\geq 0} \cap i\t_{\k,\Sigma}^*$ by iterated residues, it is tempting to replace the
function
$S^w_
{\lambda,\mu}$ by
the function
$$S^{\Sigma, w}_{\lambda,\mu}(z)=\prod_{\beta\in \Delta_\k^+}(1-e^{-\la \beta,z\ra})
\frac{e^{\la{\overline{w(\lambda)}-\mu,z\ra}}}{\prod_{\alpha \in \Delta_{\uu}}(1-e^{-{\la\overline{w(\alpha)},z\ra}})}.
$$

We consider the system of  hyperplanes $\CF_\Sigma$ in
$i\t^*_{\g, \Sigma}\oplus i\t^*_\k$ defined by the equations
$\langle \xi, wX\rangle -\langle \nu, X\rangle=0$, where $X$ is an equation
for a $\Psi$-admissible hyperplane, and $w\in \CW_\g$.
 If $(\xi,\nu)\in i\t^*_{\g, \Sigma}\oplus i\t^*_\k$ is in  a tope $\tau_\Sigma$ for $\CF_\Sigma$, then $(\xi,\nu)$ is in a unique tope  $\tau$ for $\CF$,
 and determines a tope $\a^{\tau}_w$ in $i\t^*_\k$.

The following proposition is clear.

\begin{proposition}\label{pro:branchfunct}
Let
$$p_\tau^\Sigma(\lambda,\mu)=\sum_{w \in \CW_\g/ \CW_\lp}\sum_{\gamma\in \Gamma_K/q\Gamma_K}
\sum_{\sigma \in  {\mathcal{OS}}(\Psi,\a^{\tau}_w) }
 Res_{{\overrightarrow{\sigma}}} S^{\Sigma,w}_{\lambda,\mu}(z+\frac{2i\pi \gamma}{q}).$$
Then $p_\tau^\Sigma(\lambda,\mu)$ is a quasi polynomial function  on
$\Lambda^\Sigma_{G}\oplus \Lambda_K$.
\end{proposition}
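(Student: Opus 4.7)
The plan is to adapt the argument used in the proof of Proposition~\ref{theo:ptau}. Here the goal is purely structural — we need not identify $p_\tau^\Sigma$ with a multiplicity — so the task reduces to showing that each summand, when a concrete iterated residue is computed, produces a polynomial in $(\lambda,\mu)$ multiplied by a period-$q$ phase. The key is to separate cleanly the $(\lambda,\mu)$-dependence from the $z$-dependence inside $S^{\Sigma,w}_{\lambda,\mu}(z+\tfrac{2i\pi\gamma}{q})$.

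Concretely, I would write
$$S^{\Sigma,w}_{\lambda,\mu}\Bigl(z+\tfrac{2i\pi\gamma}{q}\Bigr) = e^{\langle\overline{w(\lambda)}-\mu,\,2i\pi\gamma/q\rangle}\; e^{\langle\overline{w(\lambda)}-\mu,\,z\rangle}\; R^w_\gamma(z),$$
where
$$R^w_\gamma(z)=\frac{\prod_{\beta\in\Delta_\k^+}\bigl(1-e^{-\langle\beta,\,z+\frac{2i\pi\gamma}{q}\rangle}\bigr)}{\prod_{\alpha\in\Delta_\uu}\bigl(1-e^{-\langle\overline{w(\alpha)},\,z+\frac{2i\pi\gamma}{q}\rangle}\bigr)}$$
is a meromorphic function of $z$ near $0$ that does not involve $(\lambda,\mu)$. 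The constant prefactor $e^{\langle\overline{w(\lambda)}-\mu,\,2i\pi\gamma/q\rangle}$ factors out of the residue. For $\sigma=[\psi_{i_1},\ldots,\psi_{i_r}]\in\mathcal{OS}(\Psi,\a^\tau_w)$, the iterated residue $\Res_{\overrightarrow\sigma}$ in the coordinates $z_j=\langle z,\psi_{i_j}\rangle$ extracts the coefficient of $(z_1\cdots z_r)^{-1}$ in the Laurent expansion; since $R^w_\gamma$ has a pole at $0$ of bounded order while the Taylor expansion of $e^{\langle\overline{w(\lambda)}-\mu,z\rangle}$ in the $z_j$ has coefficients polynomial in $(\lambda,\mu)$, only finitely many Taylor terms can contribute. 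Each $\Res_{\overrightarrow\sigma} S^{\Sigma,w}_{\lambda,\mu}(z+\tfrac{2i\pi\gamma}{q})$ is therefore a polynomial in $(\lambda,\mu)$ times the periodic prefactor, and the final finite sum over $\gamma$, $\sigma$ and $w$ is by definition a quasi-polynomial on $\Lambda_G^\Sigma\oplus\Lambda_K$.

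The main point to make rigorous will be the periodicity claim: a priori $\overline{w(\lambda)}$ need not lie in $\Lambda_K$, so one must verify that $e^{\langle\overline{w(\lambda)}-\mu,\,2i\pi\gamma/q\rangle}$ genuinely defines a function on a finite quotient $(\Lambda_G^\Sigma\oplus\Lambda_K)/q(\Lambda_G^\Sigma\oplus\Lambda_K)$ for every $w\in\CW_\g$. This reduces to the integrality statement built into the definition of the index $q=q(\Psi)$: $q\Lambda_K$ is contained in the $\Z$-span of any basis of $\Psi$, and the lattice generated by the restrictions $\{\overline{w(\lambda)}:\lambda\in\Lambda_G,\,w\in\CW_\g\}$ is commensurable with $\Lambda_K$ with index dividing $q$. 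Once this commensurability is checked, the quasi-polynomial conclusion follows at once from the separation of variables above, in complete parallel with the regular case of Proposition~\ref{theo:ptau}.
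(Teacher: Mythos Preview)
Your approach is correct and matches the paper's, which does not even write a proof for this proposition (it is introduced as ``clear'', the argument being identical to that of Proposition~\ref{theo:ptau}, whose proof in turn consists only of writing the formula out explicitly). Your separation of the $(\lambda,\mu)$-dependence into the periodic prefactor times the Taylor series of $e^{\langle\overline{w(\lambda)}-\mu,z\rangle}$ is exactly the intended reasoning.

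One point you are overcomplicating: your final paragraph worries that $\overline{w(\lambda)}$ might not lie in $\Lambda_K$, and proposes to repair this via a commensurability argument tied to the index $q$. This concern is unfounded. Since $T_K\subset T_G$, restriction of a character of $T_G$ to $T_K$ is a character of $T_K$; thus the restriction map sends $\Lambda_G$ into $\Lambda_K$. As $w(\lambda)\in\Lambda_G$ for every $w\in\CW_\g$, we have $\overline{w(\lambda)}\in\Lambda_K$ automatically, hence $\overline{w(\lambda)}-\mu\in\Lambda_K$ and $\langle\overline{w(\lambda)}-\mu,\gamma\rangle\in\Z$ for $\gamma\in\Gamma_K$. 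The prefactor $e^{\langle\overline{w(\lambda)}-\mu,\,2i\pi\gamma/q\rangle}$ is therefore a $q$-th root of unity depending only on the class of $(\lambda,\mu)$ modulo $q$, with no further argument required.
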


Remark that if  $\tau_\Sigma$ is a tope for $\CF_\Sigma$,  then
$\tau _\Sigma\cap C^\Sigma_{G,K}(T^*G)$ is empty if
$C^\Sigma_{G,K}(T^*G)$ is not solid.
If the cone $ C^\Sigma_{G,K}(T^*G)$ is solid, it is the  union of the closures of the open sets
$\tau_\Sigma\cap \t^{\Sigma,*}_{\g,\k,\geq 0}$
 contained in $C^\Sigma_{G,K}(T^*G)$.

\begin{theorem}\label{theo:branch}
Let  $\tau_\Sigma$  a tope in $i\t^*_{\g, \Sigma}\oplus i\t^*_\k$ for $\CF_\Sigma$,
 and let $(\lambda,\mu)\in \overline \tau_\Sigma\cap \Lambda^\Sigma_{G,K,\geq 0}$.
Then
\begin{enumerate}
\item if $(\lambda,\mu)\notin C^\Sigma_{G,K}(T^*G)$,

$$m_{G,K}(\lambda, \mu)=p^\Sigma_\tau(\lambda,\mu).$$
\item  if $(\lambda,\mu)\in C^\Sigma_{G,K}(T^*G)$, {\bf and} the tope $\tau_\Sigma$ intersect  $C^\Sigma_{G,K}(T^*G)$,
 then
$$m_{G,K}(\lambda, \mu)=p^\Sigma_\tau(\lambda,\mu).$$
\end{enumerate}
\end{theorem}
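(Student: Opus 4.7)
The plan is to mimic the proof of the regular case theorem, replacing the full sum $\sum_{w\in\CW_\g}$ by $\sum_{w\in \CW_\g/\CW_\lp}$ and the product over $\Delta_\g^+$ by the product over $\Delta_\uu$, and carry the argument through. First, I would start from the Weyl character formula in its parabolic (Kostant--Weyl) form
$$\chi_{\lambda|T_K}=\sum_{w\in\CW_\g/\CW_\lp}\frac{e^{\overline{w(\lambda)}}}{\prod_{\alpha\in\Delta_\uu}(1-e^{-\overline{w(\alpha)}})},$$
valid for $\lambda\in\Lambda_{G,\geq 0}\cap i\t_{\k,\Sigma}^*$. Polarizing each restricted root $\overline{w(\alpha)}$ with respect to the regular element $X\in i\t_\k$ (so that the denominator becomes a partition function in the positive system $\Psi_\Sigma^w$ built from the restrictions of $\Delta_\uu$), and collecting the sign $(-1)^{s_w^\Sigma}$ and the shift $g_w^\Sigma$ coming from the identity $\frac{1}{1-e^{-\beta}}=-\frac{e^\beta}{1-e^\beta}$, gives the Kostant-type formula
$$m_{G,T_K}(\lambda,\mu)=\sum_{w\in\CW_\g/\CW_\lp}(-1)^{s_w^\Sigma}\CP_{\Psi_\Sigma^w}\bigl(\overline{w(\lambda)}+g_w^\Sigma-\mu\bigr).$$

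Next, I would apply the Weyl-denominator identity \eqref{eq:multKmultT} to pass from $T_K$ to $K$, obtaining
$$m_{G,K}(\lambda,\mu)=\sum_{\tilde w\in\CW_\k}\epsilon(\tilde w)\sum_{w\in\CW_\g/\CW_\lp}(-1)^{s_w^\Sigma}\CP_{\Psi_\Sigma^w}\bigl(\overline{w(\lambda)}+g_w^\Sigma-\mu+\tilde w(\rho_\k)-\rho_\k\bigr).$$
Suppose $(\lambda,\mu)\in\tau_\Sigma\cap\Lambda^\Sigma_{G,K,\geq 0}$ lies deep inside $\tau_\Sigma$, at a distance from the walls larger than all $\|\rho_\k-\tilde w\rho_\k\|$ and all $\|g_w^\Sigma\|$; then for every $w$ and $\tilde w$ the point $\overline{w(\lambda)}-\mu+g_w^\Sigma+\tilde w(\rho_\k)-\rho_\k$ stays inside the single tope $\a^\tau_w$ (where $\tau$ is the tope for $\CF$ containing $\tau_\Sigma$). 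On this tope, Theorem \ref{multT} (Szenes--Vergne) expresses $\CP_{\Psi_\Sigma^w}$ as an iterated residue sum over $\mathcal{OS}(\Psi,\a^\tau_w)$ and $\gamma\in\Gamma_K/q\Gamma_K$. Absorbing $\sum_{\tilde w}\epsilon(\tilde w)e^{\langle -\rho_\k+\tilde w\rho_\k,\,z+\frac{2i\pi}{q}\gamma\rangle}$ into the factor $\prod_{\beta\in\Delta_\k^+}(1-e^{-\langle\beta,z+\frac{2i\pi}{q}\gamma\rangle})$ by the denominator formula \eqref{eq:denom}, and inverting the polarization to recover $\prod_{\alpha\in\Delta_\uu}(1-e^{-\langle\overline{w(\alpha)},\cdot\rangle})$, yields exactly $p_\tau^\Sigma(\lambda,\mu)$ from Proposition \ref{pro:branchfunct}.

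Finally, I would bootstrap this deep-interior equality to all of $\overline{\tau_\Sigma}\cap\Lambda^\Sigma_{G,K,\geq 0}$ via Lemma \ref{lem:equaquasipoly}. In case (1), where $(\lambda,\mu)\notin C^\Sigma_{G,K}(T^*G)$, the function $m_{G,K}$ is identically zero on the ray and the right-hand side is a quasi-polynomial agreeing with it on a translate of an open cone inside $\tau_\Sigma$ (obtained by dilating into the deep interior), so both sides coincide. In case (2), where $\tau_\Sigma$ meets $C^\Sigma_{G,K}(T^*G)$, Theorem \ref{theo:mGK} supplies a quasi-polynomial expression for $m_{G,K}$ valid on the closed cone $\overline{\tau_\Sigma}\cap\Lambda^\Sigma_{G,K,\geq 0}$, and again Lemma \ref{lem:equaquasipoly} upgrades the deep-interior identity to the whole closure. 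The main technical point to verify carefully is the compatibility of the tope structure for $\CF_\Sigma$ in $i\t_{\g,\Sigma}^*\oplus i\t_\k^*$ with the tope $\a^\tau_w$ for $\Psi$ in $i\t_\k^*$: one must check that every shift by $g_w^\Sigma+\tilde w(\rho_\k)-\rho_\k$, for $\lambda$ restricted to lie in $i\t_{\g,\Sigma}^*$, stays in $\a^\tau_w$ once $(\lambda,\mu)$ is far enough from the walls of $\tau_\Sigma$, which is where Proposition \ref{pro:branchfunct} and the passage from $\CF$-topes to $\CF_\Sigma$-topes are used.
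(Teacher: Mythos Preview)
Your proposal is correct and follows essentially the same route as the paper's own proof: start from the parabolic Weyl character formula, polarize the restrictions of $\Delta_\uu$ with respect to the regular element $X$ to obtain a Kostant-type expression for $m_{G,T_K}$ via partition functions $\CP_{\Psi_{w,\uu}}$, pass to $K$ with the denominator identity, apply the Szenes--Vergne residue formula on a deep-interior region of $\tau_\Sigma$, undo the polarization to recover $S^{\Sigma,w}$, and finally extend to the closure via Lemma~\ref{lem:equaquasipoly}. The only minor difference is that for the closure step you invoke Theorem~\ref{theo:mGK} directly, whereas the paper appeals to the already-proved regular case to get quasi-polynomiality of $m_{G,K}$ on $\overline{\tau_\Sigma}$; both are valid.
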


\begin{proof}
If the set $\tau_\Sigma\cap C_{G,K}^{\Sigma}(T^*G)$,
 is  contained in  $\tau \cap C_{G,K}(T^*G)$,
so we  know (by before)  that on  $\overline \tau\cap  \Lambda_{G,K,_\geq 0}$,
the function $m_{G,K}$ is given by a quasi polynomial formula, so a fortiori its restriction to
$\overline {\tau_\Sigma} \cap  \Lambda^{\Sigma}_{G,K, \geq 0}$.
So it is sufficient to prove that when
$\tau_\Sigma\cap  \Lambda^{\Sigma}_{G,K, \geq 0}$
is sufficiently far away from all walls belonging to $\CF_\Sigma$, then $m_{G,K}(\lambda,\mu)$ coincide with $p^\Sigma_{\tau}$.

The proof is very similar to the preceding proof, so we skip  details.

Let $w\in \CW_\g$.
Because of our assumption on compatible systems, namely the existence of a regular element $X,$
 we can define  $\Psi_{w,\uu}=\Psi^1_{w,\uu} \cup \Psi^2_{w,\uu}$ with
$\Psi^1_{w,\uu}=\{ \overline{w\alpha}, \alpha \in \Delta_\uu, \ \la\overline{w(\alpha)},X\ra >0\}$,
 $\Psi^2_{w,\uu}=\{-\overline{w( \alpha)}, \alpha  \in \Delta_\uu,  \ \la\overline{w(\alpha)},X\ra <0\}.$
Elements in $\Psi_{w,\uu}$ are positive on $X$, so
$\Psi_{w,\uu}$ is contained in $\Psi$.
In contrast to the regular case, $\Psi_{w,\uu}$ depends of $w$
and may not contain $\Delta_\k^+$.

Let  $s^\Sigma_w=|\Psi^2_{w,\uu}|$ and
$e^{g^\Sigma_w}=\prod_{\overline{w(\alpha )},   \ \la \overline{w(a)},X\ra <0}e^{\overline{w(\alpha)}  }$
then we obtain
$$\chi_{{\lambda}|_{T_K}}=
\sum_{ w \in \CW_\g/\CW_{\lp}}\left(\frac{e^{{\overline{w(\lambda)}}}(-1)^{s^\Sigma_w} e^{g^\Sigma_w}}
{\prod_{\psi \in  \Psi_{w,\uu}}(1-e^{-\psi})}\right)$$
and
$$m_{G,K}(\lambda, \mu)=\sum_{\tilde{w} \in \CW_\k} \epsilon(\tilde{w})
\sum_{ w \in \CW_\g/\CW_{\lp}}(-1)^{s^\Sigma_w}\CP_{ \Psi_{w,\uu}}
(\overline{w(\lambda)}+g^\Sigma_w -(\mu-\tilde{w}(\rho_\k)+\rho_\k))
.$$

The point  $(\lambda,\mu)$ being in $\tau_\Sigma$, the point
$\overline{w(\lambda)} -\mu$ is in $\a_w^{\tau}$.
We can assume that $(\lambda, \mu)$ is sufficiently far away from all walls, so that
$\overline{w(\lambda)}+g^\Sigma_w -(\mu-\tilde{w}(\rho_\k)+\rho_\k))$ is also in $\a_w^{\tau}$,
so that we can apply the iterated residue formula
for $$\CP_{ \Psi_{w,\uu}}
(\overline{w(\lambda)}+g^\Sigma_w -(\mu-\tilde{w}(\rho_\k)+\rho_\k))$$
as a sum of iterated residue with respect to adapted OS basis for
$\a^{\tau}_w$.
Then we proceed as in the preceding case, reversing the polarization process, and obtain
$$m_{G,K}(\lambda,\mu)=p^\Sigma_{\tau}(\lambda,\mu)$$
provided $(\lambda,\mu)$ is  in $\tau_\Sigma$ and sufficiently for away from the walls.

If $\overline\tau \cap
C^{\Sigma}_{G,K}(T^*G)\subset \overline\tau \cap
C_{G,K}(T^*G)$
the formula is quasi polynomial, so we obtain our theorem.

\end{proof}
\section {Kronecker coefficients and examples}\label{section:kro}
We describe now our approach  to compute  Kronecker coefficients, the result is summarized in Corollary \ref{reductionKro}.

Consider  $N = n_1\cdots n_s$ and assume that $n_1$ is the maximum of the $n_i$.
\noindent Write $\CH=\C^N=\C^{n_1}\otimes \cdots  \otimes \C^{n_s}$ and  consider the action of $U(n_1)\times \cdots \times U(n_s)$
in $Sym(\CH)$.

Thus $$Sym(\CH)=\sum g(\mu_1\otimes \cdots \otimes \mu_s)V_{\mu_1}^{U(n_1)}\otimes \cdots\otimes V_{\mu_s}^{U(n_s)}.$$
We want to compute
$g(\mu_1,\ldots, \mu_s)$
and the dilated coefficients.

Let $M=n_2\cdots n_s$, and consider the
embedding of $K=U(n_2) \times \cdots \times  U(n_s)$  in $G=U(M)$ determined by: $(k_2,\cdots ,k_s)(v_2\otimes \cdots , v_s)=k_2v_2\otimes \cdots k_sv_s,$  as we explained in Example \ref{exa:Kron}.

Using the Cauchy formula \ref{exa:Cauchy formula} we can write
$$Sym(\C^N)=Sym(\C^{n_1}\otimes \C^M)=\sum_{\mu_1 \in P\Lambda_{U(n_1),\geq 0}} V_{\mu_1}^{U(n_1)} \otimes V_{\tilde\mu_1} ^{U(M)}.$$

 Write the decomposition of $V_{\tilde\mu_1}^{U(M)}$  restricted to $K$:
$$V_{\tilde\mu_1}^{U(M)}=\oplus_{\mu_2\in \hat{U}(n_2), \cdots, \mu_s \in \hat{U}(n_s)} m_{G,K}(\tilde{\mu_1}, \mu_2\otimes\cdots \otimes\mu_s)V_{\mu_2}^{U(n_2)}\otimes \cdots \otimes V_{\mu_s}^{U(n_s)}.$$
Remember that the polynomial irreducible  representation of $U(n_k)$   are parameterized by the highest weight $\gamma=[\gamma_1,\ldots, \gamma_{n_k}]$ with $\gamma_1\geq \gamma_2\geq \cdots \geq \gamma_{n_k}\geq 0$
and that  $|\gamma|=\sum_i\gamma_i.$  Taking care of the fact that if $\mu_1\otimes\cdots \otimes \mu_s$  occur in $Sym(\C^{n_1}\otimes  \cdots \otimes \C ^{n_s})$ we must have that the actions on the centers must be  the same, we obtain:
 \begin{corollary}\label{reductionKro}
$$g(\mu_1\otimes\cdots\otimes\mu_s)=\left \{\begin{array}{ll} m_{G,K} (\tilde{\mu_1}, \mu_2\otimes\cdots \otimes\mu_s) \ \  {\text {if $|\mu_1|=|\mu_2|=\cdots =|\mu_s|$}}\\
  0  \ \ {\text{otherwise}}\end{array} \right.$$
\end{corollary}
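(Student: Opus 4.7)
The plan is to compute the decomposition of $Sym(\CH)=Sym(\C^{n_1}\otimes \C^M)$ under $U(n_1)\times K$ in two different ways and match the multiplicities. On one hand, the Kronecker decomposition under $U(n_1)\times U(n_2)\times\cdots\times U(n_s)$ (restricted to the subgroup $U(n_1)\times K$) gives
$$Sym(\CH)=\bigoplus_{\mu_1,\ldots,\mu_s} g(\mu_1,\ldots,\mu_s)\, V_{\mu_1}^{U(n_1)}\otimes V_{\mu_2}^{U(n_2)}\otimes\cdots\otimes V_{\mu_s}^{U(n_s)}.$$
On the other hand, the Cauchy formula (Example \ref{exa:Cauchy formula}), applied to the bipartite factorization $\CH=\C^{n_1}\otimes \C^M$, yields
$$Sym(\CH)=\bigoplus_{\mu_1\in P\Lambda_{U(n_1),\geq 0}} V_{\mu_1}^{U(n_1)}\otimes V_{\tilde\mu_1}^{U(M)},$$
valid since $n_1\leq M$.

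The next step is to restrict each factor $V_{\tilde\mu_1}^{U(M)}$ from $G=U(M)$ to $K=U(n_2)\times\cdots\times U(n_s)$ using the branching multiplicity $m_{G,K}$, obtaining
$$Sym(\CH)=\bigoplus_{\mu_1,\mu_2,\ldots,\mu_s} m_{G,K}(\tilde\mu_1,\mu_2\otimes\cdots\otimes\mu_s)\, V_{\mu_1}^{U(n_1)}\otimes V_{\mu_2}^{U(n_2)}\otimes\cdots\otimes V_{\mu_s}^{U(n_s)}.$$
Since the irreducible representations of $U(n_1)\times U(n_2)\times\cdots\times U(n_s)$ appearing in either sum are pairwise inequivalent, and all multiplicities are finite, comparing the two expressions gives the identity
$$g(\mu_1,\mu_2,\ldots,\mu_s)=m_{G,K}(\tilde\mu_1,\mu_2\otimes\cdots\otimes\mu_s)$$
whenever $\mu_1\in P\Lambda_{U(n_1),\geq 0}$, i.e.\ $\mu_1$ is a polynomial dominant weight.

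To finish, one must handle the constraint on content. Each $V^k=Sym^k(\CH)$ lives in a single isotypic component of the center $\{(zI,zI,\ldots,zI)\}$ of $U(n_1)\times\cdots\times U(n_s)$: the element $(zI,\ldots,zI)$ acts on $Sym^k(\CH)$ by $z^k$. On an irreducible $V_{\mu_1}^{U(n_1)}\otimes\cdots\otimes V_{\mu_s}^{U(n_s)}$, this diagonal scalar acts by $z^{|\mu_1|+\cdots+|\mu_s|}$, and each factor separately contributes $z^{|\mu_i|}$. Equating with $z^k$ for $Sym^k(\CH)$ forces $|\mu_1|=|\mu_2|=\cdots=|\mu_s|=k$, so $g(\mu_1,\ldots,\mu_s)=0$ whenever the $|\mu_i|$ are not all equal. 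This also confirms $\mu_1$ must be a genuine polynomial weight, so $\tilde\mu_1$ is well-defined and the previous identification applies.

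There is essentially no obstacle here: the argument is purely formal once one has the Cauchy formula and the uniqueness of isotypic decompositions. The only point requiring (minimal) care is the compatibility of the two bookkeeping conventions---that ``$\mu_1$ polynomial with $n_1$ parts'' on the Kronecker side corresponds to ``$\tilde\mu_1$ polynomial with $M$ parts'' on the branching side under the zero-padding map $\mu_1\mapsto\tilde\mu_1$, which is exactly the embedding used in the Cauchy formula.
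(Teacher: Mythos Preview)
Your proof is correct and follows essentially the same approach as the paper: decompose $Sym(\CH)$ via the Kronecker definition, then again via the Cauchy formula for $\C^{n_1}\otimes\C^M$ followed by branching from $G=U(M)$ to $K$, and compare; the content constraint $|\mu_1|=\cdots=|\mu_s|$ is obtained in both cases from the action of the center. Your treatment of the center argument is slightly more explicit than the paper's, but the logic is identical.
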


We take advantage of the formula to somewhat reduce the computation of a tensor product with $s$ factors
to an analogous computation with $s-1$ factors.
So we will compute some of the Kronecker coefficients as a corollary of  the  branching theorem.

The case
$\C^2\otimes \C^2 \otimes \C^4$  corresponds to give explicit formulae for the decomposition of the representation of the symmetric group  associated to partitions with at most two rows.
 Complete expressions for these functions have already been obtained by
Briand et al.

We list the examples we can compute in Subsection \ref{application}. For example,
we can compute  the dilated Kronecker coefficients for
 $\C^3\otimes \C^3 \otimes \C^3$, as well as some examples for  $\C^3\otimes \C^3 \otimes \C^4$.

\subsection{The algorithm to compute Kronecker coefficients}\label {thealgoKro}
We refer to Section \ref{singular}  and Section \ref{section:kro} for the notation.

We are given  a sequence of $s$ strictly positive  integers $[n_1,\ldots,n_s]$ and for each integer $n_i$ a sequence  $\nu_i$ of integers:  $\nu_i =[\nu_1^i,\cdots,\nu_{n_i}^i]$ with $\nu_1^i\geq \nu_2^i\geq\cdots \geq \nu_{n_i}^i\geq 0.$ Each $\nu_i$ parametrizes  an  irreducible polynomial representation of $U(n_i)$ of highest weight $\nu_i$. Write  $N=\prod_{i=1}^s n_i$ and $M=\prod_{i=2}^sn_i.$
We want to compute the Kronecker coefficients $g(k\nu_1,\cdots, k\nu_{s})$  dilated by an integer $k$ that is
the multiplicity of the tensor product representation $k\nu_1\otimes \cdots \otimes k\nu_{s}$ in $Sym(\C^N).$
Our approach uses Cauchy formula together with the computation of the branching coefficients to reduce the number of parameters.
 We may assume that $n_1\leq M$ and that $|\nu_1|=|\nu_2|=\cdots =|\nu_s|$.

We set $G=U(M)$ and $K=U(n_2) \times \cdots \times  U(n_s).$

The  first reduction step is:

$\bullet$ \   If $|\nu_1|=|\nu_2|=\cdots =|\nu_s|$ then $g(\nu_1\otimes\cdots\otimes\nu_s)=
m_{G,K} (\tilde{\nu_1}, \nu_2\otimes\cdots \otimes\nu_s)   $
where $\tilde{\nu_1}$ is the highest weight representation of $U(M)$ obtained by $\nu$ adding
$M-n_1$ zeros and the branching coefficient $m_{G,K}$ is computed in Theorem \ref{theo:branch}
via the function defined in Proposition \ref{pro:branchfunct}.

Let us write $\lambda= \tilde{\nu_1}$,  $\mu=\nu_2\otimes\cdots\otimes\nu_s$.
If $n_1<M$, $\lambda$ is a singular weight for the group $U(M).$
Denote by $\Sigma$ the set of simple roots $[e_{n_1+2}-e_{n_1+1},\ldots, e_M-e_{M-1}]$
of $U(M)$. Let $\lp=\u(M-n_1)$ the Lie algebra
with this simple root system.
We have $\la \lambda, H_\alpha\ra=0$ for all $\alpha\in \Sigma$.

   Then
$(\lambda,\mu) \in \Lambda_{G,\geq 0}^\Sigma \oplus \Lambda_{K\geq 0}$ with the notations as in \ref{singular}.
Let us review the key steps of the algorithm to compute $m_{G,K}$.
See  the discussion in  \ref{difficult} outlining   the limits of the implementation.

 Given as input  $(\lambda,\mu)$, we wish to compute the branching coefficients.
 Recall that :

\begin{equation} \label{multkro}
 m_{G,K}(\lambda,\mu)=\sum_{w \in \CW_\g/ \CW_\lp}\sum_{\gamma\in \Gamma_K/q\Gamma_K}
\sum_{\sigma \in  {\mathcal{OS}}(\Psi,\a^{\tau}_w) }
 Res_{{\overrightarrow{\sigma}}} S^{\Sigma,w}_{\lambda,\mu}(z+\frac{2i\pi \gamma}{q}).
 \end{equation}
 One of the tricky point in computing the right hand  side  of equation (\ref{multkro})
  is  to find a $\CF_\Sigma$-tope $\tau_\Sigma$ such that $(\lambda,\mu) \in \overline{\tau_\Sigma}.$ We  do this by computing a regular point inside  the Kirwan cone sufficiently small  and deform
  $(\lambda,\mu)$ along the line from $(\lambda,\mu)$ to this interior point.
 \begin{enumerate}
\item We list all the  equations $X$  of  the  $\CF$- admissible hyperplanes.
\item  For each such equation given by $X$ and for $w \in W_\g$,
 we compute $H(X,w)=\la \lambda,wX\ra-\la\mu,X\ra$.  As $(\lambda,\mu)$ is in the lattice of weights,
 and $X$ in the dual lattice, $H(X,w)$ is an integer.

 Remember $\lambda, \mu$ are our fixed input.
\item \label{aa} If $H(X,w)\neq 0  \ \forall w, X$,
then $(\lambda,\mu)$ is  $\CF_\Sigma$-regular and then   it is in a tope $\tau_\Sigma.$ A fortiori it is in a unique $\CF$ tope $\tau$  and therefore  $\overline{w(\lambda)}-\mu$ is in a unique tope  $a_w^\tau\in i\t_\k^*$.

\noindent In conclusion we can  compute $\mathcal{OS}(\Psi,a^\tau_w)$.

\item Else  if  $H(w,X)=0$ for some $X$  and $w$, then we deform as follows:
\begin{enumerate}
\item We find $\epsilon=(\epsilon_1,\epsilon_2)$ in the interior of
$C^\Sigma_{G,K}$ .
We can find  this point, in the cases we treat,
because either we have the equations of the Kirwan cone, either  we know some points in the Kirwan cone by directly computing projections.

\item We rescale $\epsilon$ so that
$|\la wX,\epsilon_1\ra-\la X,\epsilon_2\ra|<1/2,$
so $(\lambda,\mu)+t\epsilon$ stays in the same tope $\tau_\Sigma$ for all $0<t<1$.
\item We define $(\lambda_{def}, \mu_{def})$ as
$(\lambda+\epsilon_1,\nu+\epsilon_2)$.
\end{enumerate}

\item We can now pick  the tope $\tau$ defined by $(\lambda_{def},\mu_{def})$ and  compute  $\mathcal{OS}(\Psi,a^\tau_w)$ as in step \ref{aa}.
\item  Now we compute $$S^{\Sigma, w}_{\lambda,\mu}(z)=\prod_{\beta\in \Delta_\k^+}(1-e^{-\la \beta,z\ra})
\frac{e^{\la{\overline{w(\lambda)}-\mu,z\ra}}}{\prod_{\alpha \in \Delta_{\uu}}(1-e^{-{\la\overline{w(\alpha)},z\ra}})}$$ and the residue along an $\mathcal{OS}$ basis with an appropriate series expansion.
\item  Finally to compute $m_{G,K}(\lambda,\mu),$ we  have to sum  the contribution from   $w \in \CW_\g/ \CW_\lp $,  over the set     $\gamma\in \Gamma_K/q\Gamma_K$  and $\sigma \in \mathcal{OS}(\Psi,a^\tau_w).$  Each individual term of these two sums,
    that is if we fix  a $\gamma$ and a $\sigma,$ is easy to compute,
    but there can be really many of these terms, making possibly the computation very long.
\end{enumerate}
Observe once again that in particular, we can  test if the point $(\lambda,\mu)$ is in the cone
$C_{G,K}^\Sigma(T^*G)$ or not, according if the output is not zero or zero.

It is not more difficult to compute with symbolic variables $(\lambda,\mu)$ belonging to the closure of a tope.
However to describe all possible topes (the chamber decomposition of $C_K(\CH)$)  seems very difficult.
So our input is $(\lambda,\mu)$, the output is either the numeric value, either the dilated coefficient
$k\to m_{G,K}(k\lambda,k\mu)$,
or (in low dimensions), a tope $\tau$ containing
$\lambda,\mu$ in its closure and the quasi-polynomial function in both variables $\lambda,\mu$
coinciding with $m_{G,K}(\lambda,\mu)$ on the tope $\tau$.

\begin{remark} Rectangular tableaux\end{remark}
Remark that if $\mu_1$ is a rectangular tableau, then $\lambda$ is even more singular.
This enable us to compute more easily using a larger set $\Sigma$ (reducing then the number of parabolic roots).
When all $\mu_i$ are rectangular tableaux, this corresponds to the case of Hilbert series.
We list the corresponding results in the last subsection \ref{HS}.

\subsection{Examples}\label{application}
The first two examples have  already been treated in the literature.

\begin{example}{ The case of 3-qbits :  $\C^2\otimes\C^2\otimes\C^2$ } \end{example}
This case has been treated in complete details in  \ref{exa3qubits} and is due to \cite{BOR1}.

\begin{example}{ The case of :  $\C^4\otimes\C^2\otimes\C^2$ } \end{example}
This example has been studied in complete details by \cite{BOR1}.
The number of chambers of polynomiality is $74$ and on each chamber the quasipolynomial is of degree 2.

The multiplicity function $k\to m(k\lambda,k\mu,k\nu)$ is a quasi polynomial function of the form
$$f(k)+(-1)^k c$$ where $f(k)$ is a polynomial of at most degree two and
$c$ is a constant.
Here is an example.
For $\lambda=[5,3,2,1]$, $\mu=\nu=[6,5]$, then we obtain
$$1/4\,{k}^{2}+1/2\,k+5/8+3/8\, \left( -1 \right) ^{k}$$

Remark that all points $(\alpha,\beta,\gamma)$ in the boundary of the Kirwan cone are stable, thus $g(k\alpha,k\beta,k\gamma)$ is $0$ or $1$.

\begin{example}{ The case of 4-qbits :  $\C^2\otimes\C^2\otimes\C^2\otimes \C^2$ } \end{example}

We consider the action of of $K=U(2)\times U(2)\times U(2)\times U(2)$ on  $\CH=\C^2\otimes \C^2\otimes \C^2\otimes \C^2$
with  $(k_1,k_2,k_3,k_4) $ acting as $k_1\otimes k_2\otimes k_3\otimes k_4$.
The Kirwan polytope has been described by  Higuchi-Sudbery-Szulc, \cite{H-Su-Sz}.

We have no idea of the number chambers for  polynomiality. Nevertheless, given  highest weights
$\alpha,\beta,\gamma,\delta$, we can compute
$g(k\alpha,k\beta,k\gamma,k\delta)$ as a periodic polynomial in $k.$

 It is a  polynomial of degree  at most $7$ and period $6.$
Here is an example. When $\alpha=\beta=\gamma=\delta=[2,1]$ then we compute:
$$m(k\alpha,k\beta,k\gamma,k\delta)=$$
$$\frac{23}{241920}k^7+\frac{13}{5760}k^6+\frac{155}{6912}k^5+\frac{139}{1152}k^4
+\left(\frac{81601}{207360}+\frac{1}{1536}(-1)^k\right )k^3+$$$$\left(\frac{9799}{11520}+(-1)^k\frac{ 5}{256}\right)k^2
+\left(\frac{38545}{32256}+(-1)^k \frac{179}{1536}\right )k+P(k)$$
where
$$P(k)= \left( {\frac {5}{243}}+{\frac {1}{243}}\,\theta\right)
 \left( {\theta}^{2} \right) ^{k}+ \left( {\frac {4}{243}}-{\frac {1}{
243}}\,\theta \right) {\theta}^{k}+{\frac {5279}{6912}}+{\frac {
51}{256}}\, \left( -1 \right) ^{k}
$$ is of period 6  and $\theta$ is a primitive root $\theta^3=1$.
The  values of $P(k)$ on $0,1,2,3,4,5$ are
$$[1, \frac{5725}{10368}, \frac{76}{81}, \frac{77}{128}, \frac{77}{81}, \frac{5597}{10368}]$$

Figure \ref{DHM4qu}  shows the Duistermaat-Heckman measure for four  qbits. The drawing is along  the line
from $v_{min}=[[1/2,1/2],[1/2,1/2],[1/2,1/2],[1/2,1/2]]$ to $v_{top}=[[1,0],[1,0],[1,0],[1,0]]$.
The function $t\to DH_K^\CH(tv_{min}+(1-t)v_{top})$  is a  spline of degree $7$ with
singularities at $t=[0,\frac{1}{5},\frac{1}{3},\frac{1}{2},1]$.

\begin{figure}[h]
\centering
\includegraphics[width=1.5 in]{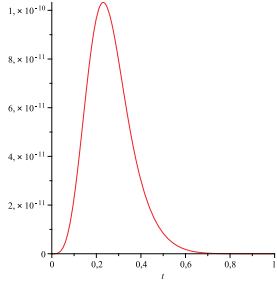}
\caption{Duistermaat-Heckman measure for four  qbits}
\label{DHM4qu}
\end{figure}

\begin{example}{ The case of :  $\C^6\otimes \C^3\otimes \C^2$ }\label{exa:632} \end{example}

When $n_2=3$, $n_3=2$, it is sufficient to consider  the case when  $n_1=6$.

Then the multiplicity function $k\to m(k\lambda,k\mu,k\nu)$ is a quasi polynomial function of the form
$$f(k)+(-1)^k g(k)+h(k)$$ where $f(k)$ is a polynomial of $k$  generally of  degree  $8$, $g(k)$ of degree $2$ and $h(k)$ is a periodic function of $k \mod \  6.$

Here is an example where the degree of the polynome is the expected one.

We fix   $\lambda=[1500,1052,940,492,268,156],\ \mu=[2110,1438,860], \nu=[2748,1660]$  and  compute:
$$m(k\lambda,k\mu,k\nu)=$$$${\frac {20160143036868818273}{540}}\,{k}^{8}+{\frac {
2401100429169038668}{945}}\,{k}^{7}+{
\frac {236265968398572733}{3240}}\,{k}^{6}+$$$${\frac {311654396584249}{270}}\,{k}^{5}+{\frac {
11934644414969}{1080}}\,{k}^{4}+{\frac {53468894201}{810}}
\,{k}^{3}+{\frac {44636639}{180}}\,{k}^{2}+{\frac {378739}{630}}\,k+$$$$
{\frac {232}{243}}+{\theta}^{k} \left( {\frac {2}{81}}+{\frac {1
}{243}}\,\theta \right) + \left( {\theta}^{2} \right) ^{k}
 \left( {\frac {5}{243}}-{\frac {1}{243}}\,\theta \right)
$$
where   $\theta$  is a  third primitive root of 1.
The term of the degree zero in  $m(k\lambda,k\mu,k\nu)$ is a periodic function of $k$   whose values are
$$\left [1,{\frac {25}{27}},{\frac {76}{81}},1,{\frac {25}{27}},{\frac {76}{81
}}\right ]$$

We finish by noticing that for $k=1\ldots 6,$ the values of  $m(k\lambda,k\mu,k\nu)$ are  given by the following list
$$[1, 39948532219001323, 9887333657571493818, 250556011548476811713, $$ $$2488623801870416780185, 14783083490287618355455].$$
We now compute examples of multiplicity on the walls.

The walls of the Kirwan cone  have been described by Klyachko in \cite{Kl}.
Given $\lambda=[\lambda_1, \lambda_2,\lambda_3,\lambda_4,\lambda_5,\lambda_6],\  \mu=[\mu_1,\mu_2,\mu_3]$ and $ \nu=[\nu_1,\nu_2,]$,
 the walls  are by  the following $5$ types of inequalities
in $\lambda,\mu,\nu.$

More precisely, for each of the inequality below, there is a particular non empty subset $S$ of $\Sigma_6\times \Sigma_3 \times \Sigma_2$ (where $\Sigma_k$ is the
of permutations of $k$ elements) computed by Klyachko such that the permuted inequality is a wall of the corresponding Kirwan cone.

\[
\begin{tabular}{|c|c|}\hline
type& equation \\
\hline\hline
I &$\nu_{{1}}-\nu_{{2}}-\lambda_{{1}}-\lambda_{{2}}-\lambda_{{3}}+\lambda_{{4}}+\lambda_{{5}}+
\lambda_{{6}}
  \leq 0 $  \\
  \hline
  II&$\mu_{{1}}+\mu_{{2}}-2\,\mu_{{3}}-\lambda_{{1}}-\lambda_{{2}}-\lambda_{
{3}}-\lambda_{{4}}+2\,\lambda_{{5}}+2\,\lambda_{{6}}\leq 0$
\\ \hline
III&$2\,\mu_{{1}}-2\,\mu_{{3}}+\nu_{{1}}-\nu_{{2}}-3\,\lambda_{{1}}-\lambda
_{{2}}-\lambda_{{3}}+\lambda_{{4}}+\lambda_{{5}}+3\,\lambda_{{6}}\leq 0$\\
\hline
IV&$2\,\mu_{{1}}+2\,\mu_{{2}}-4\,\mu_{{3}}+3\,\nu_{{1}}-3\,\nu
_{{2}}-5\,\lambda_{{1}}-5\,\lambda_{{2}}+\lambda_{{3}}+\lambda_{{4}}+\lambda_{{5}}+7\,\lambda_
{{6}}\leq 0$\\ \hline
V&$4\,\mu_{{1}}-2\,\mu_{{2}}-2\,\mu_{{3}}+3\,\nu_{{1}}-3\,\nu
_{{2}}-7\,\lambda_{{1}}-\lambda_{{2}}-\lambda_{{3}}-\lambda_{{4}}+5\,\lambda_{{5}}+5\,\lambda_
{{6}}\leq 0$\\ \hline
  \end{tabular}
\]

The following table list  elements $v=[\lambda,\mu,\nu]$ in the relative interior of the corresponding wall type facet, together with  the value of  $m(k\lambda,k\mu,k\nu).$
As asserted in Lemma \ref{max},
for the listed value $m(k\lambda,k\mu,k\nu)$ has the maximum possible degree.

\small{\[\begin{tabular}{|l|l|l|}\hline
vector & $[\lambda,\mu,\nu] $&$m(k\lambda,k\mu,k\nu)$\\
\hline\hline
$v_I$&${\small{[[288, 192, 174, 120, 30, 6], [343, 270, 197], [654, 156]]}}$&$1+17k$\\
  \hline
$v_{II}$&$[[600, 372, 300, 156, 96, 12], [876, 552, 108], [930, 606]]$&$1+{\frac {311}{2}}\,k+{\frac {21051}{2}}\,{k}^{2}+242154\,{k}^{3}$\\ \hline
$v_{III}$&$[[188, 140, 92, 52, 20, 4], [304, 152, 40], [340, 156]]$&$1$\\
\hline
$v_{IV}$&$[[276, 204, 120, 66, 30, 6], [351, 273, 78], [552, 150]]$&$1+36k$\\ \hline
$v_V$&$[[276, 198, 126, 66, 48, 6], [406, 201, 113], [536, 184]]$&$1+41k$\\ \hline
  \end{tabular}\]}

In particular, we see that all facets of type $v_{III}$ consists of stable elements. There are $20$ such facets (obtained by considering the special permutations computed by Klyachko).

As we noticed just after Theorem \ref{theo:beforemax} we don't know how to compute the degree when $\mu$ is singular, see  Subsection \ref {HS} for the case of three rectangular tableaux.
 Here is an example for which the degree is smaller. Consider  $\lambda=[9,7,5,3,2,1]$, $\mu=[9,9,9]$, $\nu=[14,13]$,  then

$m(k\lambda,k\mu,k\nu)$ is given by the following formula

$\left( {\frac {13}{64}}\, \left( -1 \right) ^{k}+{\frac {67}{64}}
 \right) k+{\frac {17}{12}}\,{k}^{2}+{\frac {617}{432}}\,{k}^{3}+{
\frac {19}{24}}\,{k}^{4}+{\frac {55}{288}}\,{k}^{5}+{\frac {1}{81}}\,{
\theta}^{k} \left( -2\,\theta+8 \right) +{\frac {1}{81}}\,
{\theta}^{2\,k} \left( 2\,\theta+10 \right) +{\frac {85}{
144}}+\frac{3}{16}\, \left( -1 \right) ^{k}$

 Here $\theta$ is  again a primitive root $\theta^3=1$.
 Thus the  term of degree zero is a periodic function $r$ of $k$ such that
$$[r(0),r(1), r(2),r(3),r(4),r(5)]=[1, 71/216, 17/27, 5/8, 19/27, 55/216]$$
Of course, the value of $m(0,0,0)$ is equal to $1$.
 Here $m(\lambda,\mu,\nu)=5$
 and for instance the value $m(17\lambda, 17\mu, 17\nu)=344715.$

\begin{example}\label{C333}{ The case of 3-quthrits. :  $\C^3\otimes\C^3\otimes\C^3$ } \end{example}
Here  $\CH=\C^3\otimes \C^3\otimes \C^3.$
The multiplicity function $k\to m(k\lambda,k\mu,k\nu)$ is a quasi polynomial function of the form
$$f(k)+(i)^k g(k)+h(k)$$ where $f(k)$ is a polynomial of $k$ of  degree at most $11$ and $h(k)$ is a periodic function of $k\pmod {12}.$
The actual numerical values are computed in a rather quick time.

Let us give an example of the dilated Kronecker coefficient.
The periodic term for the coefficient of degree 0
$m(k\lambda,k\mu,k\nu)$ with $\lambda=\mu=\nu=[4,3,2]$ is given by
{\small{$$\left [\small{1, \frac{1166651}{5308416}, \frac{13403}{20736}, \frac{29899}{65536}, \frac{59}{81}, \frac{1166651}{5308416}, \frac{235}{256}, \frac{980027}{5308416}, \frac{59}{81}, \frac{32203}{65536}, \frac{13403}{20736}, \frac{980027}{5308416}}\right]$$}}
In this case $m(k\lambda,k\mu,k\nu)$ has precisely degree 11.

 Here is a numerical example.  For  $\lambda=[5,2,1]$, $\mu=[4,2,2]$, $\nu=[3,3,2]$ we compute
$m(\lambda,\mu,\nu)=4$.

 We close with one last example that we had promised.
 \begin{example}\label{firstnonzero} \end{example}
Here  $\CH=\C^3\otimes \C^3\otimes \C^2.$
 $$g([k[1,1,1],k[1,1,1],k[2,1]])=\frac{1}{4}\, \left( -1 \right) ^{k}+{\frac {5}{12}}+ \left(1 -\theta
 \right) \frac{1}{9}{\theta}^{2\,k}+ \left( 2+\theta \right) {\frac{1}{9}\theta}^{k}+\frac{1}{6}k$$
 and for instance the list of the values for $0\leq k\leq 30$ are
$$[1, 0, 1, 1, 1, 1, 2, 1, 2, 2, 2, 2, 3, 2, 3, 3, 3, 3, 4, 3, 4, 4, 4, 4, 5, 4, 5, 5, 5, 5, 6]$$
So the saturation factor is $2$.

\subsection{Rectangular tableaux and Hilbert series}\label{HS}

\noindent We give a list of the Kronecker coefficients  for the following situation of rectangular tableaux.
We use the following notations:  $(\C^2)^3=\C^2\otimes\C^2\otimes\C^2$, $[[1,1]]^3=[[1,1],[1,1],[1,1]],$ $\C^{[4,3,3]}=\C^4\otimes \C^3\otimes \C^3$ and similarly.
In  the following Table the second column refers to the choice of the parameters $[\lambda,\mu,\nu]$ and the third column to the value of the Hilbert series $\sum_k m(k)t^k$ where $m(k)$ is the Kronecher coefficient $g(k\lambda, k\mu,k\nu)$.

\noindent \[\begin{tabular}{|l|c|l|}\hline
type& parameters& value\\
\hline\hline
$(\C^2)^3$&$[[1,1]]^3 $ &$\frac{1}{1-t^2}$\\
  \hline
$(\C^2)^4$&$[[1,1]]^4 $ &$\frac{1}{(1-t)(1-t^2)^2(1-t^3)}.$\\
  \hline
  $(\C^2)^5$&$[[1,1]]^5$ &$HS_{22222}$\\
  \hline
$(\C^3)^3$&$[[1,1,1]]^3 $ &$\frac{1}{(1-t^2)(1-t^3)(1-t^4)}.$\\
  \hline
  $\C^{[4,3,3]}$&$[[3,3,3,3], [4,4,4], [4,4,4]]$&$\frac {1+{t}^{9}}{ \left( 1-{t}^{2} \right) ^{2} \left(1- {t}^{4}
 \right)  \left( 1-t\right)  \left(1- {t}^{3} \right) }$\\
 \hline
  \end{tabular}
\]

where
$$HS_{22222}=\sum g(k[1,1], k[1,1], k[1,1], k[1,1], k[1,1]]) t^{k}=P(t)/Q(t)$$ where
$$P(t)=t^{52}+16\, t^{48}+9\, t^{47}+82\, t^{46}+145\, t^{45}+383\, t^{44}\nonumber\\
+770\, t^{43}+$$$$1659\, t^{42}+3024\, t^{41}+5604\, t^{40}+9664\, t^{39}\nonumber\\
+15594\, t^{38}+24659\, t^{37}+36611\, t^{36}+52409\, t^{35}\nonumber\\
+$$$$71847\, t^{34}+95014\, t^{33}+119947\, t^{32}+146849\, t^{31}\nonumber\\
+172742\, t^{30}+195358\, t^{29}+214238\, t^{28}+$$$$225699\, t^{27}\nonumber\\
+229752\, t^{26}+225699\, t^{25}+214238\, t^{24}+195358\, t^{23}\nonumber\\
+172742\, t^{22}+146849\, t^{21}+$$$$119947\, t^{20}+95014\, t^{19}\nonumber\\
+71847\, t^{18}+52409\, t^{17}+36611\, t^{16}+24659\, t^{15}+15594\, t^{14}\nonumber\\
+9664\, t^{13}+$$$$5604\, t^{12}+3024\, t^{11}+1659\, t^{10}+770\, t^{9}\nonumber\\
+383\, t^{8}+145\, t^{7}+82\, t^{6}+9\, t^{5}+16\, t^4+1$$\nonumber
and $$Q(t)=(1-t^2)^5(1-t^3)(1-t^4)^5(1-t^{5})(1-t^{6})^5.$$

We remark that for the $5$-qubits the  result in \cite{LUTHI}  correspond to the series $\sum_k m(k) t^{2k}$ and has a misprint on the value of the coefficient $a_n$ for $n=42$  (corresponding to the coefficient of $t^{21}$ in our formula), as the numerator is not palindromic. So the value $a_n$ for $n=42$  in \cite{LUTHI} has to be replaced by
$146849.$

We report for completeness the value of the Kronecker coefficients in the examples considered, we omit the
actual expression for the Kronecker coefficients in the $5$-qbits case and the one for $g(k[3,3,3,3],k[4,4,4],k[4,4,4])$ because the formula is too long to be reproduced here.
$$g(k[1,1],k[1,1])=\frac{1}{2}+\frac{1}{2}(-1)^k$$
$$g(k[1,1],k[1,1],k[1,1], k[1,1])=
$$$$\frac {23}{36}+\frac{1}{4}\, \left( -1 \right) ^{k}+\frac{1}{27}{\theta}^{k} \left( 2
+\theta \right) + \frac{1}{27}{\theta}^{2\,k} \left(1-\theta \right) +
 \left( {\frac {29}{48}}+\frac{1}{16}\, \left( -1 \right) ^{k} \right) k+\frac{1}{16}\,
{k}^{2}+{\frac {{k}^{3}}{72}}$$
 $$g(k[1,1,1],k[1,1,1],k[1,1,1])=$$
$${\frac {107}{288}}+{\frac {9}{32}}\, \left( -1 \right) ^{k}+\,\left(1 + \left( -1 \right) ^{k}\right)  \frac{1}{16}{i}^{k}+ \left( 1+
 \left( -1 \right) ^{k+1} \right)  \frac{1}{16}{i}^{k+1}+$$$$\frac{1}{9}\,{\theta}^{2\,k}
+\frac{1}{9}\,{\theta}^{k}+ \left( \frac{1}{16}\, \left( -1 \right) ^{k}+\frac{3}{16} \right) k+\frac{1}{48}\,{k}^{2}
$$ where $\theta$ is a third root of unity.
For  $g(k[1,1,1],k[1,1,1],k[1,1,1])$  we report, as an example, the expressions on cosets. We have twelve cosets and thus a sequence of 12 polynomials given by the following list
 {\small{$$ [1+\frac{1}{4}\,k+\frac{1}{48}\,{k}^{2},-{\frac {7}{48}}+\frac{1}{8}\ k+\frac{1}{48}\,{k}^{2},{\frac {5}{12}}+\frac{1}{4}k+\frac{1}{48}\,{k}^{2},{\frac {7}{16}}+\frac{1}{8}\, k+\frac{1}{48}\,{k}^{2},$$
  $$\frac{2}{3}+
\frac{1}{4}\,k+\frac{1}{48}\,{k}^{2},-{\frac {7}{48}}+\frac{1}{8}\,k+\frac{1}{48}\,{k}^{2},\frac{3}{4}+\frac{1}{4}\,k+
\frac{1}{48}\,{k}^{2},{\frac {5}{48}}+\frac{1}{8}\,k+\frac{1}{48}\,{k}^{2},$$$$\frac{2}{3}+\frac{1}{4}\,k+\frac{1}{48}\,{k
}^{2},\frac{3}{16}+\frac{1}{8}\,k+\frac{1}{48}\,{k}^{2},{\frac {5}{12}}+\frac{1}{4}\,k+\frac{1}{48}\,{k}^{2},{
\frac {5}{48}}+\frac{1}{8}\,k+\frac{1}{48}\,{k}^{2}]
$$}}

The following is the list of  values of the Kronecker coefficients computed by the above formula for $0\leq k\leq 20:$
$$[1, 0, 1, 1, 2, 1, 3, 2, 4, 3, 5, 4, 7, 5, 8, 7, 10, 8, 12, 10, 14].$$

\noindent Once again the saturation factor is $2.$

{\bf{Acknowledgments.}}
We  are  grateful  to Michel Duflo and Michael Walter  for
suggestions and comments.   Part of the work for this article was made during the period the authors spent at  the Institute for Mathematical Sciences(IMS) of the National University of Singapore in November/December 2013. The support received is gratefully acknowledged.
The first author was also partially supported by a PRIN2012 grant.

\end{document}